\newtheorem{theorem}{Theorem}[section]
\newtheorem{lemma}[theorem]{Lemma}
\newtheorem{proposition}[theorem]{Proposition}
\newtheorem{definition}[theorem]{Definition}
\newtheorem{corollary}[theorem]{Corollary}
\numberwithin{equation}{section}
\newcommand{\qed}{\hspace*{\fill} $\blacksquare$\medskip}
\newenvironment{proof}
{\noindent {\em Proof}.\,\,}
{\qed}
\def \Z {\mathbb Z}
\def \N {\mathbb N}
\def \cX {\mathcal X}
\def \cY {\mathcal Y}
\def \cT {\mathcal T}
\def \cR {\mathcal R}
\def \cV {\mathcal V}
\def \cI {\mathcal I}
\def \cB {\mathcal B}
\def \cF {\mathcal F}
\def \epsi {\varepsilon}
\def \prm 	 {^{\prime}}
\def \starred {^{\star}}
\def \bottom {\cF}
\def \boundary {\partial}
\def \supp	 {\text{supp}}
\def \subconf {\prec}
\def \Lambdaminus {\Lambda^{-}}
\def \metaset	{\cX_\mathrm{meta}}
\def \groundset 	{\cX_\mathrm{stab}}
\newcommand 	{\stablev}[1] {V_{#1}}
\def \comlev 	{\varPhi}
\newcommand 	{\lowset}[1] {\cI_{#1}}
\def \molset	 {\cV_{\star,\nb}^{4\nb}}
\def \nbset		{\cV_{\star,\nb}}
\newcommand {\nbis}[1]	 {\cV_{\star, #1}}
\def \refpath	{\omega\starred}
\def \setb {\varpi_{2}} 
\def \ta	{1}
\def \tb	{2}
\def \na	{n_{\ta}}
\def \nb	{n_{\tb}}
\def \freeb {\mathrm{fp}[2]}
\def \abbar	{\ta\tb\text{--bar}}
\def \abbars	{\ta\tb\text{--bars}}
\def \btile		{\tb\text{--tile}}
\def \btiles		{\tb\text{--tiles}}
\def \btiled		{\tb\text{--tiled}}
\def \btiling	{\tb\text{--tiling}}
\def \D	 {\Delta}
\def \Da	 {\Delta_{1}}
\def \Db	 {\Delta_{2}}
\def \tile {\text{t}}
\def \fromnorth	{\rotatebox[origin = c, units = 4]{1}{$\dashv$}}
\def \sfromnorth {\scalebox{0.6}{\fromnorth}}
\newcommand {\nmexpansion}[1] {\cR^{\sfromnorth}  \left(#1 \right)}
\newcommand {\nrec}[1] 	{{\rotatebox[origin = c, units = 4]{0}{$\sqcap$}}  \left(#1 \right)}
\def \notesmessage{}
\begin{document}

\author{
\renewcommand{\thefootnote}{\arabic{footnote}}
F.\ den Hollander \footnotemark[1] \, \footnotemark[2]
\\
\renewcommand{\thefootnote}{\arabic{footnote}}
F.R.\ Nardi \footnotemark[3] \, \footnotemark[2]
\\
\renewcommand{\thefootnote}{\arabic{footnote}}
A.\ Troiani \footnotemark[1]
}

\title{Kawasaki dynamics with two types of particles:\\
stable/metastable configurations and communication heights}

\footnotetext[1]{
Mathematical Institute, Leiden University, P.O.\ Box 9512,
2300 RA Leiden, The Netherlands
}
\footnotetext[2]{
EURANDOM, P.O.\ Box 513, 5600 MB Eindhoven, The Netherlands
}
\footnotetext[3]{
Technische Universiteit Eindhoven, P.O.\ Box 513, 5600 MB Eindhoven, 
The Netherlands
}

\maketitle

\marginpar{\footnotesize{\notesmessage}}


\begin{abstract}
This is the second in a series of three papers in which we study a 
two-dimensional lattice gas consisting of two types of particles
subject to Kawasaki dynamics at low temperature in a large finite 
box with an open boundary. Each pair of particles occupying neighboring 
sites has a negative binding energy provided their types are different, 
while each particle has a positive activation energy that depends on 
its type. There is no binding energy between particles of the same 
type. At the boundary of the box particles are created and annihilated 
in a way that represents the presence of an infinite gas reservoir. We 
start the dynamics from the empty box and are interested in the 
transition time to the full box. This transition is triggered by 
a critical droplet appearing somewhere in the box.

In the first paper we identified the parameter range for which the
system is metastable, showed that the first entrance distribution on 
the set of critical droplets is uniform, computed the expected 
transition time up to and including a multiplicative factor of 
order one, and proved that the nucleation time divided by its expectation 
is exponentially distributed, all in the limit of low temperature. 
These results were proved under \emph{three hypotheses}, and involve 
\emph{three model-dependent quantities}: the energy, the shape and 
the number of critical droplets. In the second paper we prove the 
first and the second hypothesis and identify the energy of critical 
droplets. In the third paper we settle the rest.

Both the second and the third paper deal with understanding the \emph{geometric 
properties} of subcritical, critical and supercritical droplets, which are crucial in 
determining the metastable behavior of the system, as explained in the first 
paper. The geometry turns out to be considerably more complex than for 
Kawasaki dynamics with one type of particle, for which an extensive literature 
exists. The main motivation behind our work is to understand metastability of 
multi-type particle systems. 

\vskip 0.5truecm
\noindent
{\it MSC2010.} 
60K35, 
82C20
82C22
82C26
05B50\\
{\it Key words and phrases.} 
Lattice gas,
Multi-type particle systems, 
Kawasaki dynamics,
Metastability,
Critical configurations,
Polyominoes,
Discrete isoperimetric inequalities.
\end{abstract}


\pagebreak


\section{Introduction}
\label{sec introduction}

Section~\ref{sec model and dynamics description} defines the model,
Section~\ref{sec basic not} introduces basic notation, 
Section~\ref{sec key theorems} states the main theorems, while
Section~\ref{sec discussion} discusses the main theorems and provides
further perspectives.


\subsection{Lattice gas subject to Kawasaki dynamics}
\label{sec model and dynamics description}

Let $\Lambda \subset \Z^2$ be a large box centered at the origin (later it 
will be convenient to choose $\Lambda$ rhombus-shaped). Let
\begin{equation}
\label{intextLam}
\begin{aligned}
\partial^-\Lambda &= \{x\in\Lambda\colon\,\exists\,y\notin\Lambda\colon\,|y-x|=1\},\\
\partial^+\Lambda &= \{x\notin\Lambda\colon\,\exists\,y\in\Lambda\colon\,|y-x|=1\},\\ 
\end{aligned}
\end{equation}
be the internal, respectively, external boundary of $\Lambda$, and put $\Lambda^-
= \Lambda\backslash\partial^-\Lambda$ and $\Lambda^+=\Lambda\cup\partial^+\Lambda$. 
With each site $x\in\Lambda$ we associate a variable $\eta(x) \in \{0,1,2\}$ indicating 
the absence of a particle or the presence of a particle of type $\ta$ or type $\tb$. 
A configuration $\eta=\{\eta(x)\colon\,x\in\Lambda\}$ is an element of 
$\cX=\{0,1,2\}^\Lambda$. To each configuration $\eta$ we associate an energy 
given by the Hamiltonian
\begin{equation}
\label{Ham1}
H = -U \sum_{(x,y)\in\Lambda^{*,-}} 
1_{\{\eta(x)\eta(y) = 2\}}\\
+ \Da \sum_{x\in\Lambda} 1_{\{\eta(x)=1\}} 
+ \Db \sum_{x\in\Lambda} 1_{\{\eta(x)=2\}},
\end{equation}
where $\Lambda^{*,-}=\{(x,y)\colon\,x,y\in\Lambda^-,\,|x-y|=1\}$ is the set of non-oriented
bonds inside $\Lambda^-$, $-U<0$ is the \emph{binding energy} between neighboring particles 
of \emph{different} types inside $\Lambda^-$, and $\Da>0$ and $\Db>0$ are the \emph{activation 
energies} of particles of type $\ta$, respectively, $\tb$ inside $\Lambda$. W.l.o.g.\ 
we will assume that 
\begin{equation}
\Da \leq \Db.
\end{equation} 
The Gibbs measure associated with $H$ is 
\begin{equation}
\label{Gibbsmeasure}
\mu_\beta(\eta) = \frac{1}{Z_\beta}\,e^{-\beta H(\eta)}, \qquad \eta\in\cX,
\end{equation}
where $\beta\in (0,\infty)$ is the inverse temperature and $Z_\beta$ is the normalizing 
partition sum.

Kawasaki dynamics is the continuous-time Markov process, $(\eta_t)_{t \geq 0}$ with 
state space $\cX$ whose transition rates are
\begin{equation}
\label{rate}
c_\beta(\eta,\eta\prm) = e^{-\beta [H(\eta\prm)-H(\eta)]_+},
\qquad \eta,\eta\prm\in\cX,\,\eta\neq\eta\prm,\,\eta\leftrightarrow\eta\prm, 
\end{equation}
where $\eta\leftrightarrow\eta\prm$ means that $\eta\prm$ can be obtained from $\eta$ 
by one of the following moves:
\begin{itemize}
\item[$\bullet$]
interchanging $0$ and $1$ or $0$ and $2$ between two neighboring sites in $\Lambda$\\
(``hopping of particles in $\Lambda$''),
\item[$\bullet$]
changing $0$ to $1$ or $0$ to $2$ in $\partial^-\Lambda$\\
(``creation of particles in $\partial^-\Lambda$''),
\item[$\bullet$]
changing $1$ to $0$ or $2$ to $0$ in $\partial^-\Lambda$\\
(``annihilation of particles in $\partial^-\Lambda$''),
\end{itemize}
and $c_\beta(\eta,\eta\prm) = 0$ otherwise. Note that this dynamics preserves particles 
in $\Lambda$, but allows particles to be created and annihilated in $\partial^-\Lambda$. 
Think of the latter as describing particles entering and exiting $\Lambda$ along 
non-oriented bonds between $\partial^+\Lambda$ and $\partial^-\Lambda$ (the rates of 
these moves are associated with the bonds rather than with the sites). The pairs 
$(\eta,\eta\prm)$ with $\eta\leftrightarrow\eta\prm$ are called \emph{communicating
configurations}, the transitions between them are called \emph{allowed moves}. Note 
that particles in $\partial^-\Lambda$ do not interact: the interaction only 
works in $\Lambda^-$. 

The dynamics defined by (\ref{Ham1}) and (\ref{rate}) models the behavior inside 
$\Lambda$ of a lattice gas in $\Z^2$, consisting of two types of particles subject 
to random hopping with hard-core repulsion and with binding between different 
neighboring types. We may think of $\Z^2\backslash\Lambda$ as an \emph{infinite 
reservoir} that keeps the particle densities fixed at $\rho_\ta=e^{-\beta\Da}$ 
and $\rho_\tb=e^{-\beta\Db}$. In the above model this reservoir is replaced by an 
\emph{open boundary} $\partial^-\Lambda$, where particles are created and annihilated 
at a rate that matches these densities. Thus, the dynamics is a \emph{finite-state} 
Markov process, ergodic and reversible with respect to the Gibbs measure $\mu_\beta$   
in \eqref{Gibbsmeasure}.

Note that there is \emph{no} binding energy between neighboring particles of the 
\emph{same} type. Consequently, the model does \emph{not} reduce to Kawasaki dynamics 
for one type of particle when $\Da=\Db$.


\subsection{Notation}
\label{sec basic not}

To state our main theorems in Section~\ref{sec key theorems}, we need some notation.

\begin{definition}
\label{def1}
(a) $\Box$ is the configuration where $\Lambda$ is empty.\\
(b) $\boxplus$ is the set consisting of the two configurations where $\Lambda$ 
is filled with the largest possible checkerboard droplet such that all particles
of type $\tb$ are surrounded by particles of type $\ta$.\\
(c) $\omega \colon\,\eta\to\eta\prm$ is any path of allowed moves from $\eta \in \cX$ 
to $\eta\prm \in \cX$.\\
(d) $\comlev(\eta,\eta\prm)$ is the communication height between $\eta,\eta\prm
\in\cX$ defined by
\begin{equation}
\comlev(\eta,\eta\prm) = \min_{\omega \colon\,\eta\to\eta\prm}
\max_{\xi\in\omega} H(\xi),
\end{equation}
and $\comlev(A,B)$ is its extension to non-empty sets $A,B\subset\cX$ defined by
\begin{equation}
\comlev(A,B) = \min_{\eta\in A,\eta\prm\in B} \comlev(\eta,\eta\prm).
\end{equation}
(e) $\stablev{\eta}$ is the stability level of $\eta\in\cX$ defined by
\begin{equation}
\stablev{\eta} = \comlev({\eta},\lowset{\eta}) - H(\eta),
\end{equation}
where $\lowset{\eta}=\{\xi\in\cX\colon\,H(\xi)<H(\eta)\}$ is the set of
configurations with energy lower than $\eta$.\\
(f) $\groundset = \{\eta\in\cX\colon\,H(\eta)=\min_{\xi\in\cX} H(\xi)\}$ is the 
set of stable configurations, i.e., the set of configurations with mininal energy.\\
(g) $\metaset = \{\eta\in\cX\colon\,\stablev{\eta}=\max_{\xi\in\cX\backslash\groundset} 
\stablev{\xi}\}$ is the set of metastable configurations, i.e., the set of non-stable 
configurations with maximal stability level.\\
(h) $\Gamma=\stablev{\eta}$ for $\eta\in\metaset$ (note that $\eta \mapsto \stablev{\eta}$ 
is constant on $\metaset$), $\Gamma\starred=\comlev(\Box,\boxplus) - H(\Box)$
(note that $H(\Box) = 0$). 
\end{definition}
 
In \cite{dHNT11} we were interested in the transition of the Kawasaki dynamics from 
$\Box$ to $\boxplus$ in the limit as $\beta\to\infty$. This transition, which is
viewed as a crossover from a ``gas phase'' to a ``liquid phase'', is triggered 
by the appearance of a \emph{critical droplet} somewhere in $\Lambda$. The critical 
droplets form a subset of the set of configurations realizing the energetic minimax 
of the paths of the Kawasaki dynamics from $\Box$ to $\boxplus$, which all have 
energy $\Gamma\starred$ because $H(\Box)=0$. 

In \cite{dHNT11} we showed that the first entrance distribution on the set of critical 
droplets is uniform, computed the expected transition time up to and including a 
multiplicative factor of order one, and proved that the nucleation time divided by 
its expectation is exponentially distributed, all in the limit as $\beta\to\infty$. 
These results, which are typical for metastable behavior, were proved under 
\emph{three hypotheses}: 
\begin{itemize}
\item[(H1)] 
$\groundset=\boxplus$.
\item[(H2)] 
There exists a $V\starred<\Gamma\starred$ such that $V_\eta\leq V\starred$ for 
all $\eta\in\cX\backslash\{\Box,\boxplus\}$.
\item[(H3)] 
A hypothesis about the shape of the configurations in the essential gate for
the transition from $\Box$ to $\boxplus$ (for details see \cite{dHNT11}).
\end{itemize} 
Hypotheses (H1--H3) are the \emph{geometric input} that is needed to derive the 
main theorems in \cite{dHNT11} with the help of the \emph{potential-theoretic
approach} to metastability as outlined in Bovier~\cite{B09}. In the present paper 
we prove (H1--H2) and identify the energy $\Gamma\starred$ of critical droplets. In \cite{dHNTpr} 
we settle the rest.

\begin{lemma}
{\rm (H1--H2)} imply that $\stablev{\Box} = \Gamma\starred$, and hence that 
$\metaset = \Box$ and $\Gamma = \Gamma\starred$.
\end{lemma}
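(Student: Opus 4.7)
The plan is to show $V_\Box = \Gamma\starred$ directly from the definitions and then read off $\metaset = \{\Box\}$ and $\Gamma = \Gamma\starred$ via (H2).

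First I would establish the easy inequality $V_\Box \leq \Gamma\starred$. By (H1), $\boxplus$ is the unique set of global minimizers of $H$, so every $\eta \in \boxplus$ satisfies $H(\eta) < H(\Box) = 0$; in particular $\boxplus \subset \lowset{\Box}$. Hence
\begin{equation*}
V_\Box = \comlev(\Box, \lowset{\Box}) - H(\Box) \leq \comlev(\Box, \boxplus) - H(\Box) = \Gamma\starred.
\end{equation*}

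The main work is the reverse inequality $V_\Box \geq \Gamma\starred$, which I would prove by contradiction using (H2) as a ``descent engine''. Suppose $V_\Box < \Gamma\starred$. Then there exists a path $\omega_1 \colon \Box \to \eta^{(1)}$ with $\eta^{(1)} \in \lowset{\Box}$ and $\max_{\xi \in \omega_1} H(\xi) < \Gamma\starred$. If $\eta^{(1)} \in \boxplus$ we already have a path from $\Box$ to $\boxplus$ of height strictly below $\Gamma\starred$, contradicting $\Gamma\starred = \comlev(\Box, \boxplus)$. Otherwise $\eta^{(1)} \notin \{\Box, \boxplus\}$, so (H2) supplies a path $\omega_2 \colon \eta^{(1)} \to \eta^{(2)}$ with $\eta^{(2)} \in \lowset{\eta^{(1)}}$ and $\max_{\xi \in \omega_2} H(\xi) \leq H(\eta^{(1)}) + V\starred$. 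Because $H(\eta^{(1)}) < 0$, this bound is strictly less than $V\starred < \Gamma\starred$. Concatenating $\omega_1$ with $\omega_2$ yields a path $\Box \to \eta^{(2)}$ whose maximum is $< \Gamma\starred$, and $\eta^{(2)} \in \lowset{\Box}$ since $H(\eta^{(2)}) < H(\eta^{(1)}) < 0$.

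I would then iterate this construction to produce a strictly decreasing sequence of energies $H(\eta^{(1)}) > H(\eta^{(2)}) > \cdots$, with concatenated paths $\Box \to \eta^{(k)}$ whose maximum stays below $\Gamma\starred$ at each stage. Since $\cX$ is finite, the sequence must terminate at some $\eta^{(k)}$ for which $\lowset{\eta^{(k)}} = \emptyset$, i.e., $\eta^{(k)} \in \groundset$. By (H1), $\eta^{(k)} \in \boxplus$, and we have exhibited a path from $\Box$ to $\boxplus$ of maximum height strictly less than $\Gamma\starred = \comlev(\Box, \boxplus)$, a contradiction. Hence $V_\Box = \Gamma\starred$.

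Finally, the conclusion is immediate: for $\eta \in \cX \setminus (\groundset \cup \{\Box\}) = \cX \setminus (\boxplus \cup \{\Box\})$, (H2) gives $V_\eta \leq V\starred < \Gamma\starred = V_\Box$, so $\Box$ is the unique maximizer of the stability level on $\cX \setminus \groundset$. Therefore $\metaset = \{\Box\}$ and $\Gamma = V_\Box = \Gamma\starred$. The only genuinely delicate step is the iterative concatenation: one must verify that \emph{every} successive descending leg contributes height bounded by $V\starred$ (thanks to $H(\eta^{(k)}) < 0$), so that the running maximum never crosses $\Gamma\starred$.
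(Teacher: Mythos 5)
Your proof is correct and follows essentially the same route as the paper: both establish $\stablev{\Box}\leq\Gamma\starred$ from (H1) and then argue by contradiction, using (H2) together with the finiteness of $\cX$ to build a strictly descending sequence ending in $\boxplus$ whose connecting barriers stay below $\Gamma\starred$ because the intermediate energies are negative. The only cosmetic difference is that you track the running maximum of an explicitly concatenated path, whereas the paper invokes the inequality $\comlev(\eta,\sigma)\leq\max\{\comlev(\eta,\xi),\comlev(\xi,\sigma)\}$ to the same effect.
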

\begin{proof}
By Definition~\ref{def1}(e--h) and (H1), $\boxplus\in\lowset{\Box}$, which implies that 
$\stablev{\Box}\leq\Gamma\starred$. We show that (H2) implies $\stablev{\Box}
=\Gamma\starred$. The proof is by contradiction. Suppose that $\stablev{\Box}
<\Gamma\starred$. Then, by Definition~\ref{def1}(h), there exists a  
$\eta_{0}\in\lowset{\Box}\backslash\boxplus$ such that $\comlev(\Box,\eta_{0})
-H(\Box)<\Gamma\starred$. But (H2), together with the finiteness of $\cX$, implies
that there exist an $m \in \N$ and a sequence $\eta_{1},\ldots,\eta_{m}
\in\cX$ with $\eta_{m}=\boxplus$ such that $\eta_{i+1}\in\lowset{\eta_{i}}$ and 
$\comlev(\eta_{i},\eta_{i+1})\leq H(\eta_{i})+V\starred$ for $i=0,\ldots,m-1$. 
Therefore
\begin{equation}
\comlev(\eta_{0},\boxplus)
\leq \max_{i=0,\ldots,m-1} \comlev(\eta_{i}, \eta_{i+1})
\leq \max_{i=0,\ldots,m-1} [H(\eta_{i}) + V\starred]
= H(\eta_{0}) + V\starred 
< H(\Box) + \Gamma\starred,
\end{equation}
where in the first inequality we use that $\comlev(\eta,\sigma)\leq\max\{\comlev(\eta,\xi),
\comlev(\xi,\sigma)\}$ for all $\eta,\sigma,\xi\in\cX$, and in the last inequality that 
$\eta_{0}\in\lowset{\Box}$ and $V\starred<\Gamma\starred$. It follows that
\begin{equation}
\comlev(\Box, \boxplus)-H(\Box) 
\leq \max\{\comlev(\Box,\eta_{0})-H(\Box),\comlev(\eta_{0},\boxplus)-H(\Box)\}
< \Gamma\starred,
\end{equation}
which contradicts Definition~\ref{def1}(h).
Observe that the proof uses that $\metaset$ consists of a single configuration.
\end{proof}

Hypotheses (H1--H2) imply that $(\metaset,\groundset)=(\Box,\boxplus)$, and that 
the highest energy barrier between any two configurations in $\cX$ is the one 
separating $\Box$ and $\boxplus$, i.e., $(\Box,\boxplus)$ is the unique \emph{metastable
pair}. Hypothesis (H3) is needed only to find the asymptotics of the prefactor of
the expected transition time in the limit as $\Lambda\to\Z^2$. The main theorems 
in \cite{dHNT11} involve \emph{three model-dependent quantities}: the energy, the 
shape and the number of critical droplets.


\subsection{Main theorems}
\label{sec key theorems}

In \cite{dHNT11} it was shown that $\Da + \Db < 4U$ is the \emph{metastable region}, 
i.e., the region of parameters for which $\Box$ is a local minimum but not a global 
minimum of $H$. Moreover, it was argued that within this region the subregion where 
$\Da,\Db<U$ is of no interest because the critical droplet consists of two free 
particles, one of type $\ta$ and one of type $\tb$. Therefore the \emph{proper 
metastable region} is 
\begin{equation}
\label{propmetreg}
0< \Da \leq \Db, \quad \Da + \Db < 4U, \quad \Db \geq U,
\end{equation}
as indicated in Fig.~\ref{fig-propmetreg}.

\begin{figure}[htbp]
\begin{centering}
{\includegraphics[width=0.3\textwidth]{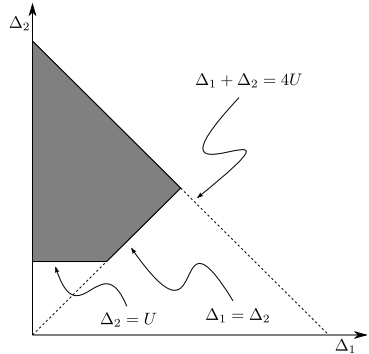}}
\par\end{centering}
\caption{Proper metastable region.}
\label{fig-propmetreg}
\end{figure}

In this present paper, the analysis will be carried out for the subregion where
\begin{equation}
\label{subpropmetreg}
0 < \Da < U, \quad  \Db - \Da > 2U, \quad \Da + \Db < 4U,
\end{equation}
as indicated in Fig.~\ref{fig-subpropmetreg}. \emph{Note}: The second and third 
restriction imply the first restriction. Nevertheless, we write all three because 
each plays an important role in the sequel.

\begin{figure}[htbp]
\begin{centering}
{\includegraphics[width=0.3\textwidth]{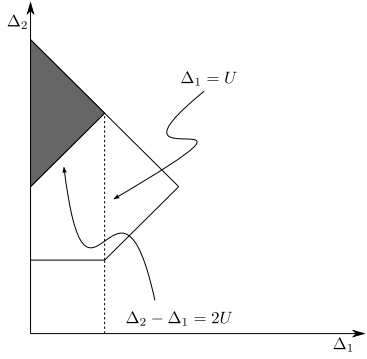}}
\par\end{centering}
\caption{Subregion of the proper metastable region given by (\ref{subpropmetreg}).}
\label{fig-subpropmetreg}
\end{figure}

The following three theorems are the main result of the present paper and are valid
subject to (\ref{subpropmetreg}). We write $\lceil\cdot\rceil$ to denote the upper 
integer part.

\begin{theorem}
\label{theorem ground states}
$\groundset=\boxplus$.
\end{theorem}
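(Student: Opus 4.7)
The plan is to show that the two checkerboard configurations constituting $\boxplus$ are the unique global minimizers of $H$ over $\cX$, proceeding in three steps.

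First, the binding sum in \eqref{Ham1} ranges only over bonds in $\Lambda^{*,-}$ (both endpoints in $\Lambda^-$), so any particle placed in $\partial^-\Lambda$ contributes its activation energy $\Da$ or $\Db$ to $H$ without any compensating binding. Removing such a particle strictly decreases $H$, so every ground state is empty on $\partial^-\Lambda$, and it suffices to minimize $H$ over configurations supported on $\Lambda^-$.

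Second, for such $\eta$, write $H(\eta) = \Da n_1 + \Db n_2 - U B_{12}$, where $n_i$ counts type-$i$ particles and $B_{12}$ counts bonds between unlike types in $\Lambda^{*,-}$. Using the bipartite structure of $(\Lambda^-,\Lambda^{*,-})$ with sublattices $A$ and $B$, I would decompose $B_{12}$ according to which sublattice carries the type-$\ta$ endpoint of each bond and bound each contribution above using the constraint that every site has at most four neighbors in $\Lambda^-$. A direct computation on the rhombus-shaped $\Lambda$ then shows that the resulting lower bound is attained precisely by the two checkerboards, and that its value is strictly negative, so that $H(\boxplus) < 0 = H(\Box)$.

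Third, the assumptions in \eqref{subpropmetreg} enter in showing the lower bound of the previous step is strict outside $\boxplus$. The argument proceeds via strictly improving local moves, each consuming one of the three inequalities: $\Da < U$ makes filling an empty site adjacent to a type-$\tb$ particle with a type-$\ta$ particle improve $H$ by at most $\Da - U < 0$; $\Da + \Db < 4U$ makes filling an empty interior site whose four neighbors are all type-$\ta$ with a type-$\tb$ particle improve $H$ by $\Db - 4U < 0$; and $\Db - \Da > 2U$ makes switching a type-$\tb$ particle having at most two type-$\ta$ neighbors to a type-$\ta$ particle improve $H$ by at most $\Da - \Db + 2U < 0$. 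A composite two-site move, placing a type-$\tb$ and a type-$\ta$ at two adjacent empty interior sites each surrounded by type-$\ta$ particles, removes any empty patch fully enclosed by type-$\ta$'s with total gain $\Da + \Db - 4U < 0$. Applying these moves exhaustively reduces any non-checkerboard configuration to one of strictly smaller energy, leaving exactly $\boxplus$.

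The main obstacle is the uniqueness step, particularly the case analysis for sites of $\Lambda^-$ adjacent to $\partial^-\Lambda$, which have fewer than four neighbors in $\Lambda^-$ and for which several of the above local moves require separate verification. The rhombus shape of $\Lambda$ is chosen precisely so that both checkerboard orientations fit into $\Lambda^-$ without defect at the boundary, ultimately leaving exactly two ground states.
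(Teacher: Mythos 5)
Your first step (emptying $\partial^-\Lambda$) and your catalogue of strictly improving local moves are correct and correspond to what the paper actually uses: removal of isolated or boundary particles, $\Da<U$ forcing every neighbor of a particle of type $\tb$ to be occupied, and $\Db-\Da>2U$ forcing every particle of type $\tb$ to have four neighbors of type $\ta$. Together these establish that every ground state is a $\btiled$ configuration. But that is where your argument stops working. The set of configurations fixed by all of your local moves is the set of \emph{all} $\btiled$ droplets --- for instance any $\ell\times\ell$ checkerboard dual square with $\ell<L$, or a union of several well-separated such droplets --- not just $\boxplus$. These are exactly the local minima of $H$, and one cannot pass between them by strictly energy-decreasing moves (growing a droplet requires first paying for a protuberance; this is the very mechanism behind the metastability studied here). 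So the claim that ``applying these moves exhaustively reduces any non-checkerboard configuration \dots leaving exactly $\boxplus$'' is false: a global comparison of energies across different droplet shapes and sizes is required, and no sequence of strictly improving moves can supply it.

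The missing ingredient is the isoperimetric step. Any bound on $B_{12}$ obtained from the degree constraint, with or without the bipartite decomposition, is of the form $B\le 4n_\tb$ and yields $H\ge \Da n_\ta+(\Db-4U)n_\tb$; this is attained with equality by \emph{every} configuration in which all particles of type $\tb$ are saturated, so it cannot single out $\boxplus$, and it gives no control on how small $n_\ta$ can be for a given $n_\tb$ --- which is precisely the quantity that decides between competing droplets. The paper closes this gap by writing $H=-\tfrac14\epsi B+\tfrac14\Da\,\cT$, identifying $\cT$ with the number of missing bonds $4n_\ta-B$ of a $\btiled$ cluster (Lemma~\ref{lemma number of bonds in a molecule cluster}), and invoking the Alonso--Cerf polyomino isoperimetric inequality (Lemma~\ref{lemma alpha optimality}) to show that for fixed $n_\tb$ the standard quasi-squares minimize $\cT$ and that splitting into several clusters is strictly worse; the explicit energy formula of Lemma~\ref{lemma energy of btiled standard configurations}, together with $L>2\ell\starred$, then shows that the energy decreases along growing quasi-squares beyond the critical size, so the largest quasi-square fitting in $\Lambdaminus$ wins. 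Without this step, or an equivalent discrete isoperimetric argument, your proposal does not prove the theorem.
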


\begin{theorem}
\label{theorem recurrence}
There exists a $V\starred\leq 10U-\Da$ such that $\stablev{\eta} \leq V\starred$ 
for all $\eta\in\cX\backslash\{\Box,\boxplus\}$. Consequently, if $\Gamma\starred
>10U-\Da$, then $\metaset = \Box$ and $\Gamma=\Gamma\starred$. 
\end{theorem}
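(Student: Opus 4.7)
The goal is to prove that every $\eta\in\cX\setminus\{\Box,\boxplus\}$ admits a path of allowed moves leading to some $\eta'\in\lowset{\eta}$ whose communication height exceeds $H(\eta)$ by at most $10U-\Da$; the second assertion of the theorem then follows from the preceding lemma together with Theorem~\ref{theorem ground states}. I would classify $\eta$ by the local geometry of its particle support and exhibit an explicit reference path in each class, exploiting two crucial consequences of (\ref{subpropmetreg}): $0<\Da<U$ makes type-$\ta$ particles ``cheap'' to create and annihilate at $\partial^-\Lambda$, while $\Db>2U$ makes any type-$\tb$ particle carrying at most two $\ta\tb$-bonds energetically profitable to remove outright.

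The easy cases are as follows. If $\eta$ contains a \emph{free particle} (one with no neighbour of opposite type), that particle can be routed through empty sites to $\partial^-\Lambda$ at constant energy and annihilated, lowering $H$ by $\Da$ or $\Db$, so $\stablev{\eta}=0$. If every particle is bonded but some type-$\tb$ particle lies at a convex corner of its cluster and hence has at most two $\ta\tb$-bonds, I would first break those bonds (cost at most $2U$), then escort the freed particle to $\partial^-\Lambda$ and annihilate it, for a net gain of $\Db-2U>0$; hence $\stablev{\eta}\le 2U$. The hard case consists of configurations with no free particles and in which every type-$\tb$ particle has three or four $\ta\tb$-bonds, i.e., saturated checkerboard droplets possibly decorated by extra type-$\ta$ particles. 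Here I would first detach a carefully chosen type-$\ta$ particle, at cost at most $4U-\Da$ (a type-$\ta$ has at most four neighbours), in such a way that a neighbouring type-$\tb$ becomes exposed with at most two bonds; the configuration then falls into the previous class. A careful accounting of the full path bounds the maximum energy excess over $H(\eta)$ by $10U-\Da$: up to $4U-\Da$ to detach a type-$\ta$, up to $2U$ to free the exposed type-$\tb$, and up to a further $4U$ of transient bond-breaking to route the detached pieces through the surrounding cluster to $\partial^-\Lambda$.

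The main obstacle is verifying the constant $10U-\Da$ uniformly over the hard case. This requires a finite case analysis guided by the geometric classification of clusters (tiles, bars, protuberances) developed in this paper, together with a short discrete isoperimetric argument to rule out any shape whose dismantling would demand more. One must also ensure that the auxiliary particles can always be transported to $\partial^-\Lambda$ through empty corridors of $\Lambda$, which uses that $\Lambda$ is taken sufficiently large. The extremal configurations saturating the bound are expected to be small droplets sitting at the boundary of the ``large droplet'' regime, for which neither direct dismantling nor growth toward $\boxplus$ is particularly cheap.
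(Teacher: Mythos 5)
Your three-way classification (free particle; underbonded particle of type $\tb$; fully saturated droplet) is reasonable, and your first two cases are indeed handled essentially as you describe (they correspond to parts of the paper's cleaning mechanism). The fatal gap is in your hard case: you propose to handle \emph{every} saturated checkerboard droplet by dismantling it, and this fails for \emph{supercritical} droplets. Recall that $\stablev{\eta}=\comlev(\eta,\lowset{\eta})-H(\eta)$ requires reaching a configuration of \emph{strictly lower} energy within the stated barrier. Take $\eta$ to be a single $\btiled$ $\ell\times\ell$ dual square with $\ell>\ell\starred$. Removing an $\abbar$ of length $j\geq\ell\starred$ \emph{raises} the energy by $j\epsi-\Da\geq 0$ (cf.\ \eqref{Hjumps}), so every configuration on a dismantling path stays at or above $H(\eta)$ until the droplet has shrunk below critical size; the total climb is at least of order $\sum_{j=\ell\starred}^{\ell}(j\epsi-\Da)$, which grows with $\ell$ and is certainly not bounded by $10U-\Da$. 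Your step ``the configuration then falls into the previous class'' also conflates barriers measured from the intermediate configuration with barriers measured from $\eta$: removing one type-$\ta$ particle with four bonds and then one type-$\tb$ particle with two bonds nets $6U-\Da-\Db>0$, so you have not yet entered $\lowset{\eta}$ and must iterate while tracking the running maximum relative to $H(\eta)$, which your accounting does not do. For a supercritical droplet the only cheap way into $\lowset{\eta}$ is to \emph{grow} it: adding an $\abbar$ of length $\geq\ell\starred$ to a lattice-connecting rectangle lowers the energy and costs only the barrier $2\Da+2\Db-4U$. Any correct proof must therefore split on the subcritical/supercritical dichotomy and, in the supercritical case, build a path toward $\boxplus$; this is exactly what the paper's energy reduction algorithm does (turn a supercritical bridge into a bar, grow it into a rectangle, maximally expand, merge with neighbouring rectangles, and fill $\Lambda$). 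Your proposal contains no growth mechanism at all.

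Two further points. First, the constant $10U-\Da$ does not arise from your tally $(4U-\Da)+2U+4U$ along a dismantling path; in the paper it is the barrier of the maximal expansion mechanism needed to regularise a supercritical cluster into a $\btiled$ rectangle when other clusters or $\partial^-\Lambda$ are in the way (already the motion of a unit hole through a $\btiled$ cluster costs $6U$). Second, ``taking $\Lambda$ sufficiently large'' cannot guarantee empty corridors to $\partial^-\Lambda$: the bound must be uniform over all $\eta\notin\{\Box,\boxplus\}$, including configurations that are crowded or hug the boundary, and it is precisely those configurations that force the barrier up from $4U+\Da$ to $10U-\Da$ (see Section~\ref{sec discussion}, item 4).
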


\begin{theorem}
\label{theorem communication height}
$\Gamma\starred = - [\ell\starred(\ell\starred-1)+1](4U-\Da -\Db) 
+ (2\ell\starred+1)\Da +\Db$ with
\begin{equation}
\label{eq value of critical length} 
\ell\starred = \left\lceil \frac{\Da}{4U - \Da - \Db} \right\rceil \in \N.
\end{equation}
\end{theorem}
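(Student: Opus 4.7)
The plan is to prove the identity by matching upper and lower bounds on $\comlev(\Box,\boxplus)$. The quantity $\Gamma\starred$ is the height of a minimax configuration along an optimal nucleation path, and the critical integer $\ell\starred$ of \eqref{eq value of critical length} will emerge from a one-dimensional discrete optimisation over rectangular cluster sizes.

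\emph{Upper bound.} For $\comlev(\Box,\boxplus) \leq \Gamma\starred$ I would exhibit an explicit reference path $\refpath \colon \Box \to \boxplus$ that grows through a sequence of checkerboard quasi-square droplets: each stable intermediate consists of an approximately rectangular cluster in which every type-$\tb$ particle is fully surrounded by four type-$\ta$ neighbours, the two species sitting on complementary sublattices of $\Z^{2}$. Between consecutive stable intermediates, the path creates type-$\ta$ particles at $\partial^{-}\Lambda$ (paying $\Da$ each), slides them to the cluster at zero cost, and uses them as stepping stones to deposit a new type-$\tb$ particle adjacent to the existing cluster (saving $U$ for each new $\ta\tb$ binding but paying $\Db$ for its activation). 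Decomposing $H(\eta) = -Ub(\eta) + \Da \na(\eta) + \Db \nb(\eta)$ and tracking the energy row by row, the maximum along $\refpath$ is attained at a configuration whose droplet side is close to $\ell\starred$, and an explicit calculation verifies that this maximum equals the claimed $\Gamma\starred$.

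\emph{Lower bound.} For $\comlev(\Box,\boxplus) \geq \Gamma\starred$ I would use a discrete isoperimetric inequality: for any $\eta \in \cX$ with $\nb(\eta)=k$, the number of $\ta\tb$ bonds obeys $b(\eta) \leq 4k - p(k)$, where $p(k)$ is the minimal effective perimeter over $k$-tile configurations and is attained by the quasi-square checkerboard. Since each allowed move changes $\nb$ by at most one, every path $\omega \colon \Box \to \boxplus$ must traverse every integer value of $\nb$ between $0$ and its terminal value; at each such value the isoperimetric inequality, combined with the observation that saturating a type-$\tb$ particle requires four type-$\ta$ neighbours, yields a lower bound on $H(\eta)$. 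Optimising the resulting envelope over $k$ produces the concave function $\ell \mapsto -[\ell(\ell-1)+1](4U-\Da-\Db) + (2\ell+1)\Da + \Db$, whose maximum over $\ell \in \N$ is attained at $\ell\starred = \lceil \Da/(4U-\Da-\Db) \rceil$, which is exactly \eqref{eq value of critical length}.

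\emph{Main obstacle.} I expect the hard step to be the lower bound, and within it the isoperimetric inequality for two-species clusters. One must rule out exotic minimisers of $p(k)$---dislocated $\abbar$ bars and other non-checkerboard local patterns that could temporarily produce extra bonds without proportionally enlarging the droplet---and correctly account for free type-$\ta$ particles that contribute to $\na$ but not to $b$, as well as for partial rows where the minimax is actually attained. The constraints in \eqref{subpropmetreg} enter precisely here: $\Db - \Da > 2U$ forces every type-$\tb$ in any energy-minimising shape to be fully saturated (otherwise removing it would strictly lower the energy), while $\Da < U$ ensures that the reference path can afford the free type-$\ta$ stepping stones it needs to shuttle between the boundary and the growing cluster.
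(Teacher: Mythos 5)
Your upper bound (a reference path through growing checkerboard quasi-squares) and your identification of the hard static input (that minimal-energy configurations at fixed $\nb$ are $\btiled$ standard configurations, with $\Db-\Da>2U$ forcing saturation of every type-$\tb$ particle and $\Da<U$ making the type-$\ta$ coating affordable) both match the paper. The gap is in the lower bound. The argument ``every path must traverse every value of $\nb$, and at each value an isoperimetric inequality bounds $H$ from below'' yields only
\[
\max_{k}\,\min\{H(\eta)\colon \nb(\eta)=k\},
\]
and this maximum equals $H(\eta^{\ell\starred-1,1,1})=-\epsi[\ell\starred(\ell\starred-1)+1]+(2\ell\starred+1)\Da=\Gamma\starred-\Db$, \emph{not} $\Gamma\starred$. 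Indeed, the isoperimetric minimiser at the supercritical particle number $\ell\starred(\ell\starred-1)+2$ is the standard configuration with the extra tile \emph{attached} (quasi-square plus a bar of length $2$), whose energy is even lower, namely $H(\eta^{\ell\starred-1,1,1})-\epsi$. The $+\Db$ in the formula you write down for the ``envelope'' is asserted but not produced by the argument you sketch, so the lower bound as proposed falls short of $\Gamma\starred$ by $\Db$.

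The missing ingredient is a first-exit argument in place of a pointwise-in-$\nb$ argument. The paper applies the identity $\comlev(\Box,\boxplus)=\min_{\eta\in\boundary\cB}H(\eta)$ of Manzo--Nardi--Olivieri--Scoppola to $\cB=\{\eta\colon\nb(\eta)\le\ell\starred(\ell\starred-1)+1\}$. The only allowed move that increases $\nb$ is the creation of a type-$\tb$ particle in $\partial^-\Lambda$, where it has no active bonds; hence every configuration in $\boundary\cB$ consists of a configuration with $\ell\starred(\ell\starred-1)+1$ particles of type $\tb$ \emph{plus a free particle of type $\tb$}, and therefore has energy at least $\min\{H(\xi)\colon\nb(\xi)=\ell\starred(\ell\starred-1)+1\}+\Db=H(\eta^{\ell\starred-1,1,1})+\Db=\Gamma\starred$ (the minimum being identified via Lemma~\ref{lemma key} and Lemmas~\ref{lemma optimal $2$-tiled configurations}--\ref{lemma energy of btiled standard configurations}). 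Since every path from $\Box$ to $\boxplus$ must pass through $\boundary\cB$, this closes the lower bound. To repair your proof you need this step --- the last subcritical-to-supercritical type-$\tb$ particle must enter from the boundary as a free particle, paying the full $\Db$ before it can bind --- rather than optimising the static envelope over $k$.
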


Theorem~\ref{theorem ground states} settles hypothesis (H1) in \cite{dHNT11}, 
Theorem~\ref{theorem recurrence} settles hypothesis (H2) in \cite{dHNT11} 
when $\Gamma\starred>10U-\Da$, while Theorem~\ref{theorem communication height} 
identifies $\Gamma\starred$. 

As soon as $V\starred<\Gamma\starred$, the energy landscape does not contain 
wells deeper than those surrounding $\Box$ and $\boxplus$. Theorems~\ref{theorem
ground states} and \ref{theorem recurrence} imply that this occurs at least when 
$\Gamma\starred>10U-\Da$, while Theorem~\ref{theorem communication height} identifies 
$\Gamma\starred$ and allows us to exhibit a further subregion of (\ref{subpropmetreg}) 
where the latter inequality is satisfied. This further subregion contains the shaded 
region in Fig.~\ref{fig-relpar}.  

\begin{figure}[htbp]
\begin{centering}
{\includegraphics[width=0.3\textwidth]{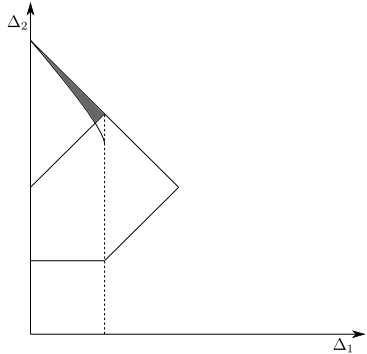}}
\par\end{centering}
\caption{The parameter region where $\Gamma\starred>10U-\Da$ contains the shaded region.}
\label{fig-relpar}
\end{figure}


\subsection{Discussion}
\label{sec discussion}

{\bf 1.}
In Section~\ref{sec identification of gamma} we will see that the \emph{critical 
droplets} for the crossover from $\Box$ to $\boxplus$ consist of a \emph{rhombus-shaped
checkerboard with a protuberance plus a free particle}, as indicated in 
Fig.~\ref{fig-critical_droplet}. A more detailed description will be given in \cite{dHNTpr}. 

\begin{figure}[htbp]
\begin{centering}
{\includegraphics[width=0.3\textwidth]{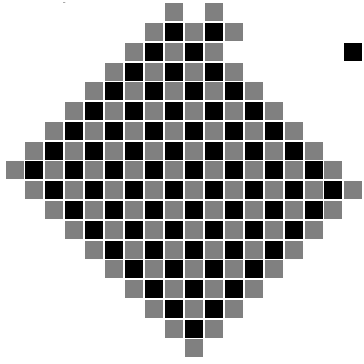}}
\par\end{centering}
\caption{A critical droplet. Light-shaded squares are particles of type $\ta$, 
dark-shaded squares are particles of type $\tb$. The particles of type $\tb$ form
an $\ell\starred\times(\ell\starred-1)$ quasi-square with a protuberance attached 
to one of its longest sides, and are all surrounded by particles of type $\ta$. 
In addition, there is a free particle of type $\tb$. As soon as this free particle
attaches itself ``properly'' to a particle of type $\ta$ the dynamics is ``over the hill''
(see \cite{dHNT11}, Section 2.3, item 3).}
\label{fig-critical_droplet}
\end{figure}

\medskip\noindent
{\bf 2.}
Abbreviate
\begin{equation}
\label{epsdef}
\epsi = 4U - \Da - \Db
\end{equation}
and write $\ell\starred=(\Da/\epsi)+\iota$ with $\iota\in [0,1)$. Then an easy 
computation shows that $\Gamma\starred=(\Da)^2/\epsi+\Da+4U+\epsi\iota(1-\iota)$. 
From this we see that 
\begin{equation}
\ell\starred \sim \Da/\epsi, \qquad \Gamma\starred \sim (\Da)^2/\epsi,
\qquad \epsi \downarrow 0. 
\end{equation}
The limit $\epsi \downarrow 0$ corresponds to the \emph{weakly supersaturated}
regime, where the lattice gas wants to condensate but the energetic threshold 
to do so is high (because the critical droplet is large). From the viewpoint
of metastability this regime is the most interesting. The shaded region in 
Fig.~\ref{fig-relpar} captures this regime for all $0<\Da<U$. This region 
contains the set of parameters where $(\Da)^2/\epsi+\Da+4U>10U-\Da$, i.e.,
$\epsi/U<(\Da/U)^2/[6-2(\Da/U)]$.   

\medskip\noindent
{\bf 3.}
The simplifying features of (\ref{subpropmetreg}) over (\ref{propmetreg}) are 
the following: $\Da<U$ implies that each time a particle of type $\ta$ enters 
$\Lambda$ and attaches itself to a particle of type $\tb$ in a droplet the 
energy goes down, while $\Db-\Da>2U$ implies that no particle of type 
$\tb$ sits on the boundary of a droplet that has minimal energy given the 
number of particles of type $\tb$ in the droplet. In \cite{dHNT11} we 
conjectured that the metastability results presented there actually hold 
throughout the region given by (\ref{propmetreg}), even though the critical 
droplets will be \emph{different} when $\Da \geq U$.   

As will become clear in Section~\ref{sec ground states}, the constraint $\Da<U$ 
has the effect that in all configurations that are local minima of $H$ all 
particles on the boundary of a droplet are of type $\ta$. It will turn out 
that such configurations consist of a single \emph{rhombus-shaped checkerboard 
droplet}. We expect that as $\Da$ increases from $U$ to $2U$ there is a gradual 
transition from a rhombus-shaped checkerboard critical droplet to a square-shaped 
checkerboard critical droplet. This is one of the reasons why it is difficult 
to go beyond (\ref{subpropmetreg}).

\medskip\noindent
{\bf 4.} 
What makes Theorem~\ref{theorem recurrence} hard to prove is that the estimate 
on $V_\eta$ has to be uniform in $\eta\notin\{\Box,\boxplus\}$. In configurations
containing several droplets and/or droplets close to $\partial^-\Lambda$ there may
be a lack of free space making the motion of particles inside $\Lambda$ difficult.
The mechanisms developed in Section~\ref{sec Recurrence} allow us to realize an
\emph{energy reduction} to a configuration that lies on a suitable \emph{reference
path for the nucleation} within an energy barrier $10U-\Da$ also in the absence of 
free space around each droplet. 

We will see in Section~\ref{sec Recurrence} that for droplets sufficiently far 
away from other droplets and from $\partial^-\Lambda$ a reduction within an energy 
barrier $\leq 4U + \Da$ is possible. Thus, if we would be able to control the 
configurations that fail to have this property, then we would have $V\starred\leq 4U + \Da$ 
and, consequently, would have $\metaset=\Box$ and $\Gamma=\Gamma\starred$ throughout 
the subregion given by (\ref{subpropmetreg}) because $\Gamma\starred>4U + \Da$.

Another way of phrasing the last observation is the following. We view the 
``liquid phase'' as the configuration filling the entire box $\Lambda$. If, 
instead, we would let the liquid phase correspond to the set of configurations 
filling most of $\Lambda$ but staying away from $\partial^-\Lambda$, then the 
metastability results derived in \cite{dHNT11} would apply throughout the 
subregion given by (\ref{subpropmetreg}).     

\medskip\noindent
{\bf 5.}
Theorems~\ref{theorem ground states} and \ref{theorem communication height}
can actually be proved \emph{without} the restriction $\Db-\Da>2U$. However, 
removal of this restriction makes the task of showing that in droplets with minimal 
energy all particles of type $\tb$ are surrounded by particles of type $\ta$ 
more involved than what is done in Section~\ref{sec ground states}. We omit 
this extension, since the restriction $\Db-\Da>2U$ is needed for 
Theorem~\ref{theorem recurrence} anyway.

\medskip\noindent
{\bf Outline.}
Section~\ref{sec defnot} contains preparations.
Theorems~\ref{theorem ground states}--\ref{theorem communication height}
are proved in Sections \ref{sec ground states}--\ref{sec Recurrence}, 
respectively. The proofs are \emph{purely combinatorial}, and are rather 
involved due to the presence of two types of particles rather than one.
Sections~\ref{sec ground states}--\ref{sec identification of gamma}
deal with \emph{statics} and Section~\ref{sec Recurrence} with \emph{dynamics}.
Section~\ref{sec Recurrence}
is technically the hardest and takes up about half of the paper. More detailed outlines
are given at the beginning of each section.


\section{Coordinates, definitions and polyominoes}
\label{sec defnot}

Section~\ref{sec coordinates} introduces two coordinate systems that are used 
to describe the particle configurations: standard and dual. Section~\ref{sec def} 
lists the main geometric definitions that are needed in the rest of the paper.
Section~\ref{sec lemma polyominoes} proves a lemma about polyominoes (finite 
unions of unit squares) and Section~\ref{sec bonds in tiled clusters} a lemma
about $\btiled$ clusters (checkerboard configurations where all particles of 
type $\tb$ are surrounded by particles of type $\ta$). These lemmas are needed 
in Section~\ref{sec ground states} to identify the droplets of minimal energy 
given the number of particles of type $\tb$ in $\Lambda$.  


\subsection{Coordinates}
\label{sec coordinates}

\newcounter{notcounter}
\begin{list}{\textbf{\arabic{notcounter}.~}}
{\usecounter{notcounter}
\labelsep=0em \labelwidth=0em \leftmargin=0em \itemindent=0em}

\item
A site $i\in\Lambda$ is identified by its \emph{standard coordinates} 
$(x_{1}(i),x_{2}(i))$, and is called odd when $x_{1}(i)+x_{2}(i)$ is 
odd and even when $x_{1}(i)+x_{2}(i)$ is even. The standard coordinates 
of a particle $p$ in $\Lambda$ are denoted by $x(p) = (x_{1}(p),x_{2}(p))$. 
The \emph{parity} of a particle $p$ is defined as $x_{1}(p)+x_{2}(p)
+\eta(x(p))$ modulo 2, and $p$ is said to be odd when the parity is $1$ 
and even when the parity is $0$.

\item
A site $i\in\Lambda$ is also identified by its \emph{dual coordinates}
\begin{equation}
u_1(i) = \frac{x_1(i) - x_2(i)}{2}, \qquad u_2(i) = \frac{x_1(i) + x_2(i)}{2}.
\end{equation}
Two sites $i$ and $j$ are said to be \emph{adjacent}, written $i \sim j$, 
when $|x_1(i)-x_1(j)|+ |x_2(i)-x_2(j)|=1$ or, equivalently, $|u_1(i)-u_1(j)| 
= |u_2(i)-u_2(j)| = \tfrac12$ (see Fig.~\ref{fig-coord}).

\item
For convenience, we take $\Lambda$ to be the $(L+\tfrac32) \times (L+\tfrac32)$ 
dual square centered at the origin for some $L\in\N$ with $L>2\ell\starred$
(to allow for $H(\boxplus) < H(\Box)$; 
see Section \ref{sec standard configurations are optimal btiled configurations}). 
Particles interact only inside $\Lambdaminus$, which is the 
$(L+\tfrac{1}{2}) \times (L+\tfrac{1}{2})$ dual square centered at the origin. 
This dual square, a \emph{rhombus} in standard coordinates, is convenient because 
the local minima of $H$ are rhombus-shaped as well (see Section~\ref{sec ground states}). 

\end{list}

\begin{figure}[htbp]
\centering
\subfigure[]
{\includegraphics[height=0.24\textwidth]{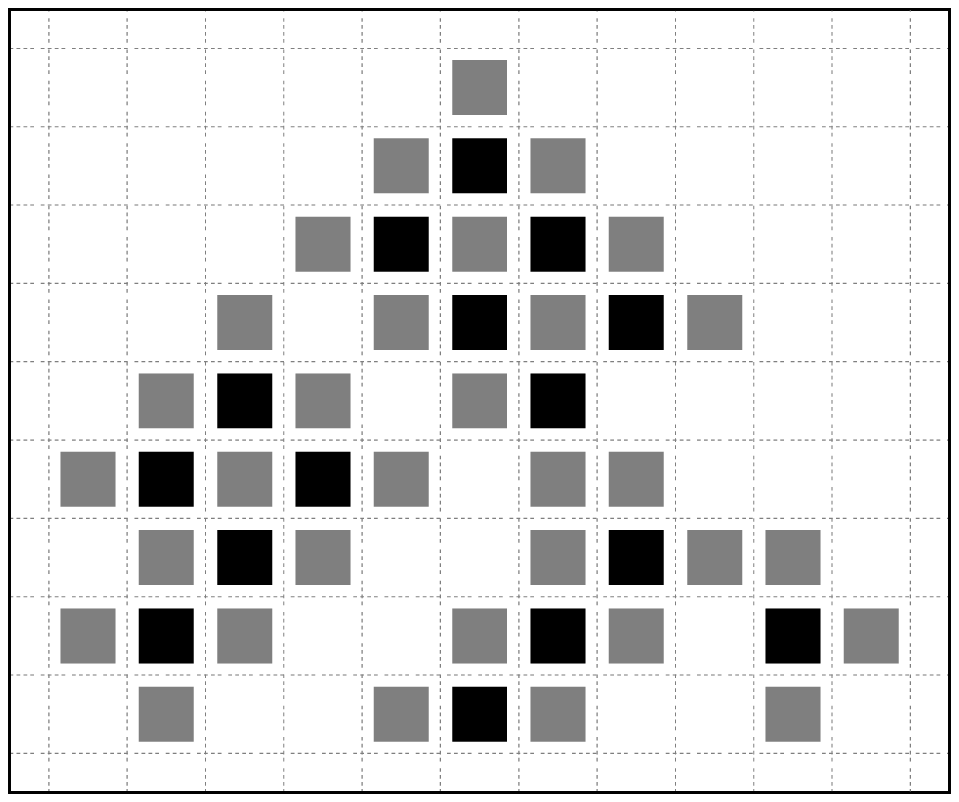}}
\qquad
\subfigure[]
{\includegraphics[height=0.24\textwidth]{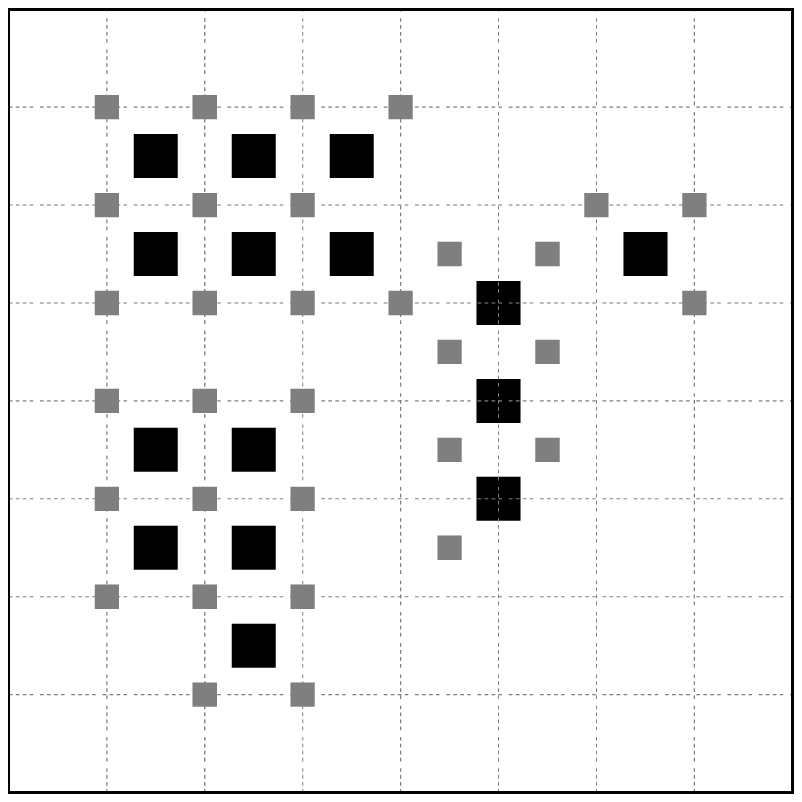}}
\caption{A configuration represented in: (a) standard coordinates; 
(b) dual coordinates. Light-shaded squares are particles of type $\ta$,
dark-shaded squares are particles of type $\tb$. In dual coordinates, 
particles of type $\tb$ are represented by larger squares than particles 
of type $\ta$ to exhibit the ``tiled structure'' of the configuration.}
\label{fig-coord}
\end{figure}


\subsection{Definitions}
\label{sec def}

\newcounter{defcounter}
\begin{list}{\textbf{\arabic{defcounter}.~}}
{\usecounter{defcounter}
\labelsep=0em \labelwidth=0em \leftmargin=0em \itemindent=0em}

\item
A site $i\in\Lambda$ is said to be \emph{lattice-connecting} in the configuration $\eta$ 
if there exists a lattice path $\lambda$ from $i$ to $\partial^-\Lambda$ such that 
$\eta(j) = 0$ for all $j \in \lambda$ with $j \neq i$. We say that a particle $p$ is 
lattice-connecting if $x(p)$ is a lattice-connecting site.
\label{def group lattice}

\item	
Two particles in $\eta$ at sites $i$ and $j$ are called \emph{connected} if $i \sim j$ and 
$\eta(i)\eta(j) = 2$. If two particles $p_{1}$ and $p_{2}$ are connected, then we say 
that there is an \emph{active bond} $b$ between them. The bond $b$ is said to be \emph{incident} 
to $p_{1}$ and $p_{2}$. A particle $p$ is said to be \emph{saturated} if it is connected to 
four other particles, i.e., there are four active bonds incident to $p$. The support of 
the configuration $\eta$, i.e., the union of the unit squares centered at the occupied 
sites of $\eta$, is denoted by $\supp{(\eta)}$. For a configuration $\eta$, $n_\ta(\eta)$ 
and $n_\tb(\eta)$ denote the number of particles of type $\ta$ and $\tb$ in $\eta$, and 
$B(\eta)$ denotes the number of active bonds. The energy of $\eta$ equals $H(\eta)
=\Da n_\ta(\eta)+\Db n_\tb(\eta)-UB(\eta)$.
\label{def group connected particles and bonds}

\item
Let $G(\eta)$ be the \emph{graph} associated with $\eta$, i.e., $G(\eta)=(V(\eta),E(\eta))$, 
where $V(\eta)$ is the set of sites $i\in\Lambda$ such that $\eta(i)\ne 0$, and $E(\eta)$ 
is the set of the pairs $\{i,j\}$, $i,j\in V(\eta)$, such that the particles at sites $i$ 
and $j$ are connected. A configuration $\eta'$ is called a \emph{subconfiguration} of 
$\eta$, written $\eta' \subconf \eta$, if $\eta'(i)=\eta(i)$ for all $i\in\Lambda$ 
such that $\eta'(i)>0$. A subconfiguration $c\subconf\eta$ is a \emph{cluster} if the 
graph $G(c)$ is a maximal connected component of $G(\eta)$. The set of non-saturated 
particles in $c$ is called the \emph{boundary} of $c$, and is denoted by $\boundary c$. 
Clearly, all particles in the same cluster have the same parity. Therefore the concept of 
parity extends from particles to clusters. 
\label{def group clusters and parity}

\item
For a site $i\in\Lambda$, the \emph{tile} centered at $i$, denoted by $\tile(i)$, is the 
set of five sites consisting of $i$ and the four sites adjacent to $i$. If $i$ is an even 
site, then the tile is said to be even, otherwise the tile is said to be odd. The five 
sites of a tile are labeled $a$, $b$, $c$, $d$, $e$ as in Fig.~\ref{fig-2tile}. 
The sites labeled $a$, $b$, $c$, $d$ are called \emph{junction sites}. 
If a particle $p$ sits at site $i$, then $\tile(i)$ is also denoted by $\tile(p)$ and is 
called the tile associated with $p$. In standard coordinates, a tile is a square of size 
$\sqrt{2}$. In dual coordinates, it is a unit square. 
\label{def group tiles}

\item
A tile whose central site is occupied by a particle of type $\tb$ and whose junction 
sites are occupied by particles of type $\ta$ is called a \emph{$\btile$} (see
Fig.~\ref{fig-2tile}). Two $\btiles$ are said to be adjacent if their particles
of type $\tb$ have dual distance 1. A horizontal (vertical) \emph{$\abbar$} is a 
maximal sequence of adjacent $\btiles$ all having the same horizontal (vertical) 
coordinate. If the sequence has length $1$, then the $\abbar$ is called a \emph{$\btiled$ 
protuberance}. A cluster containing at least one particle of type $\tb$ such that 
all particles of type $\tb$ are saturated is said to be $\btiled$. A $\btiled$ 
configuration is a configuration consisting of $\btiled$ clusters only.
\label{def group btiles}

\begin{figure}[htbp]
\centering
\subfigure[]
{\includegraphics[height=0.12\textwidth]{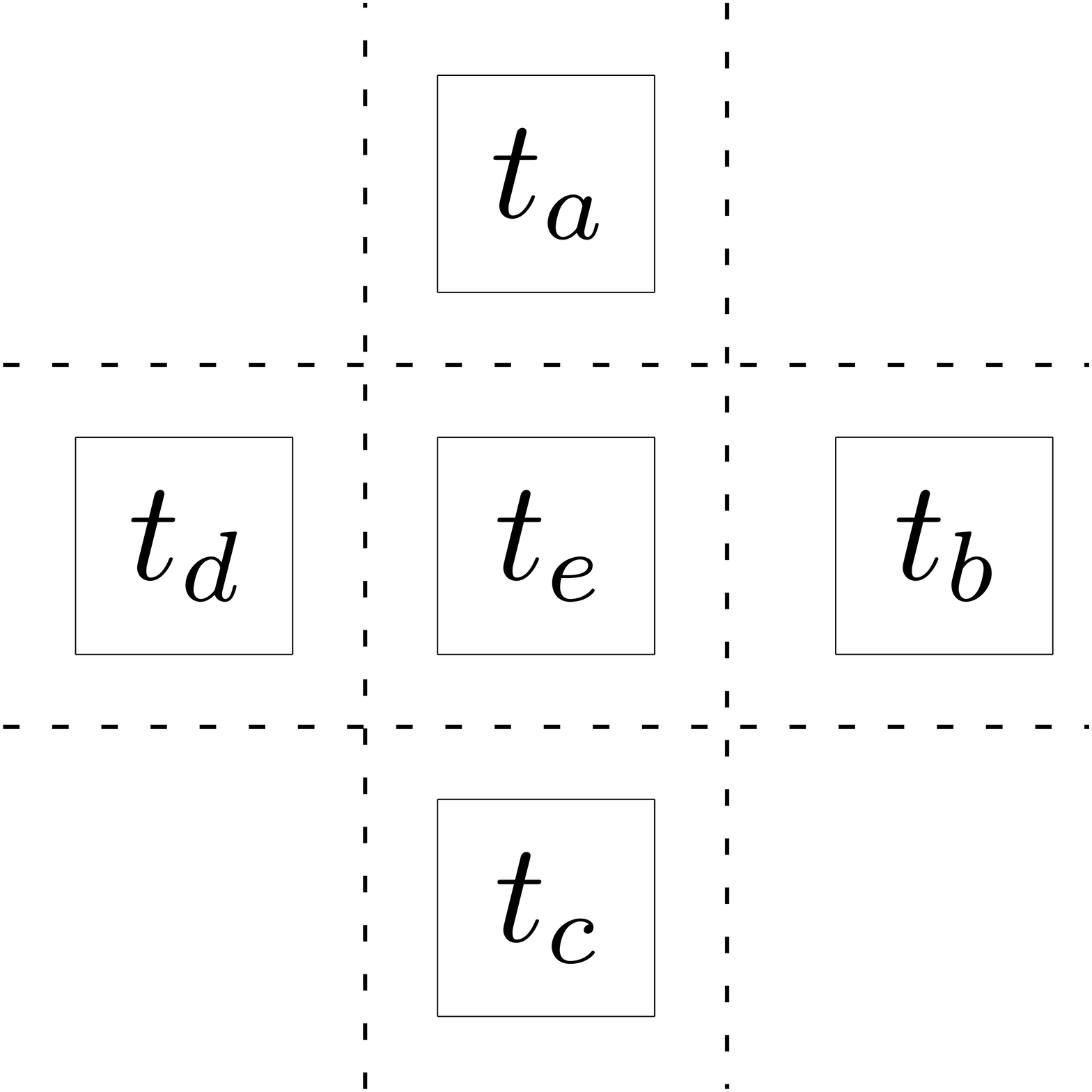}}
\quad
\subfigure[]
{\includegraphics[height=0.12\textwidth]{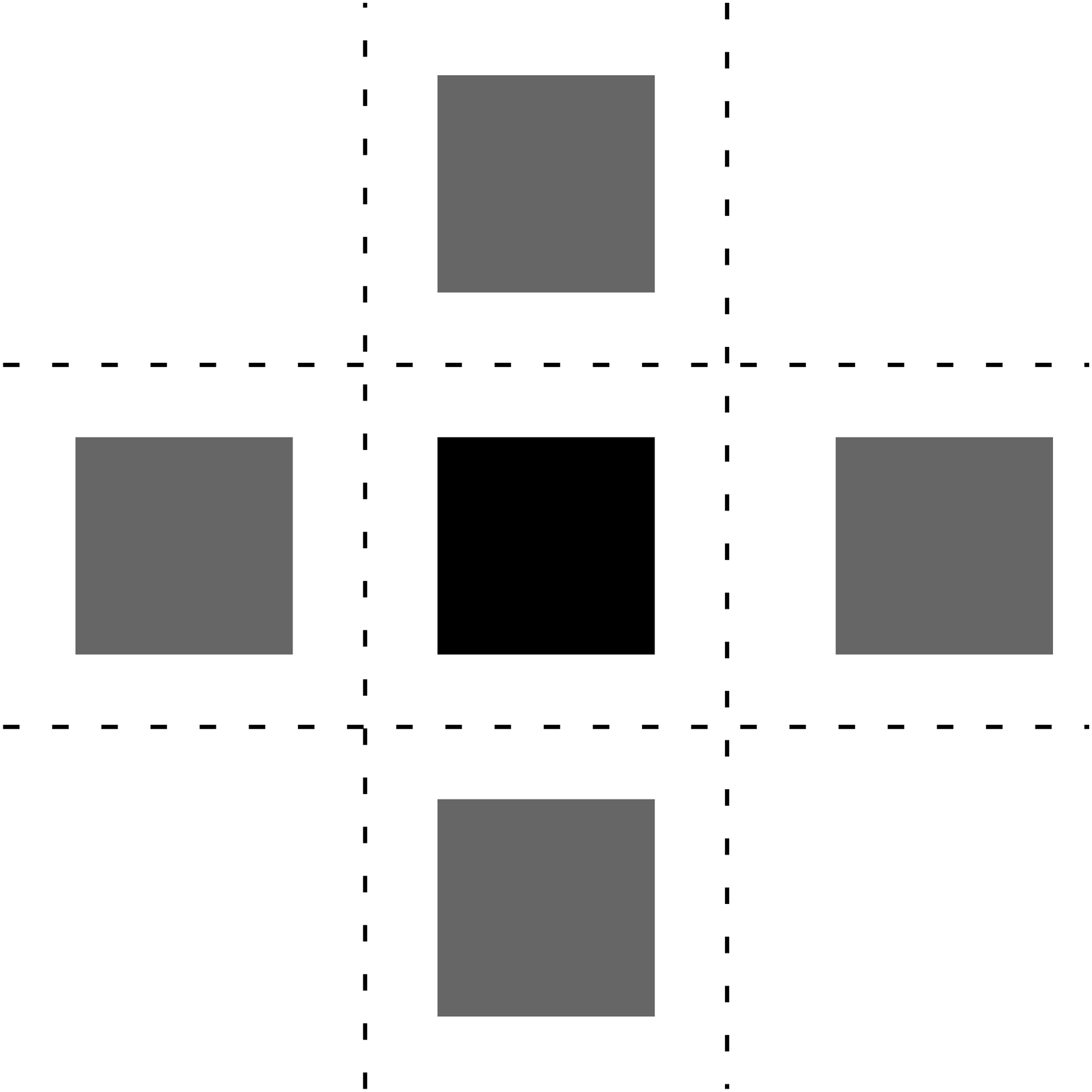}}
\quad
\subfigure[]
{\includegraphics[height=0.12\textwidth]{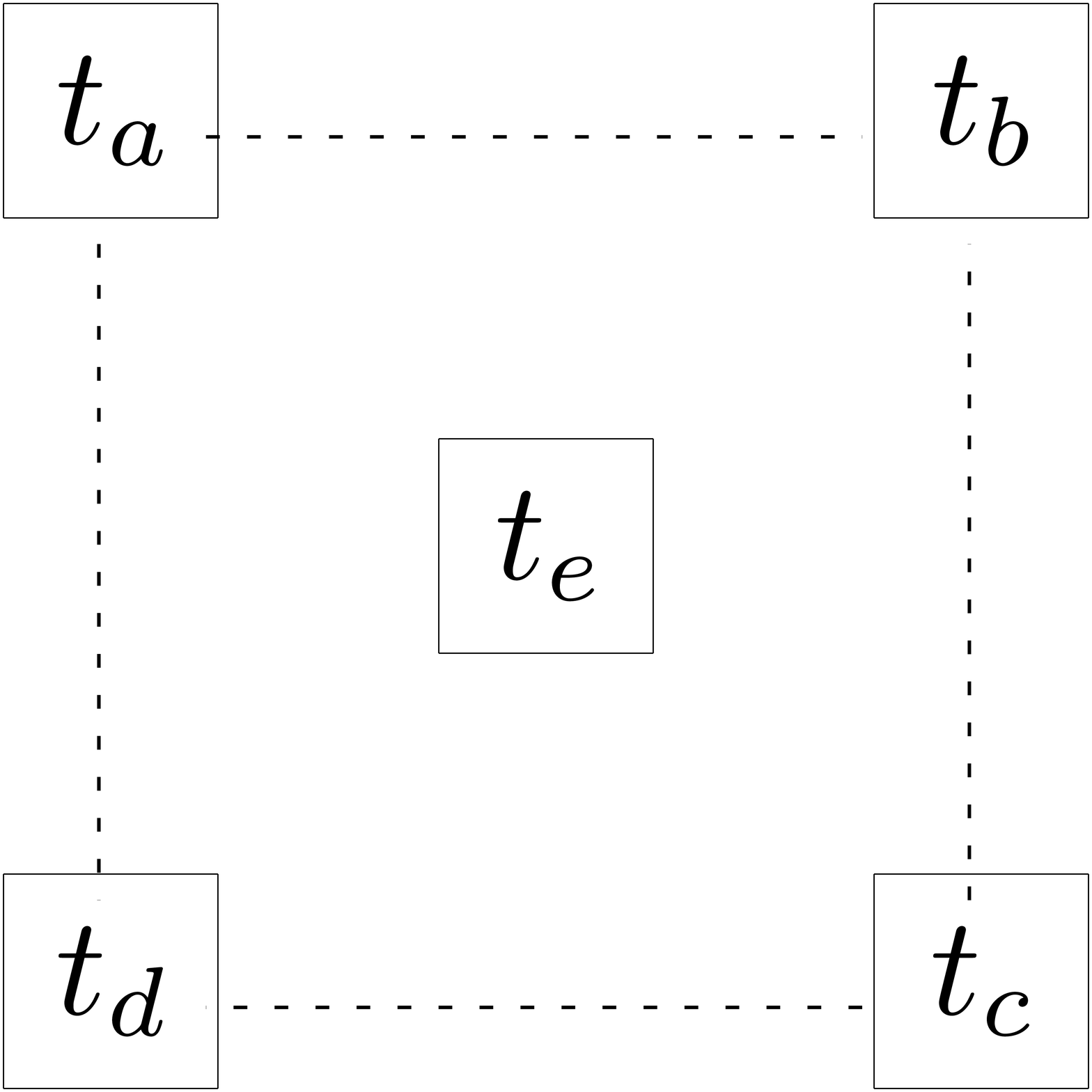}}
\quad
\subfigure[]
{\includegraphics[height=0.12\textwidth]{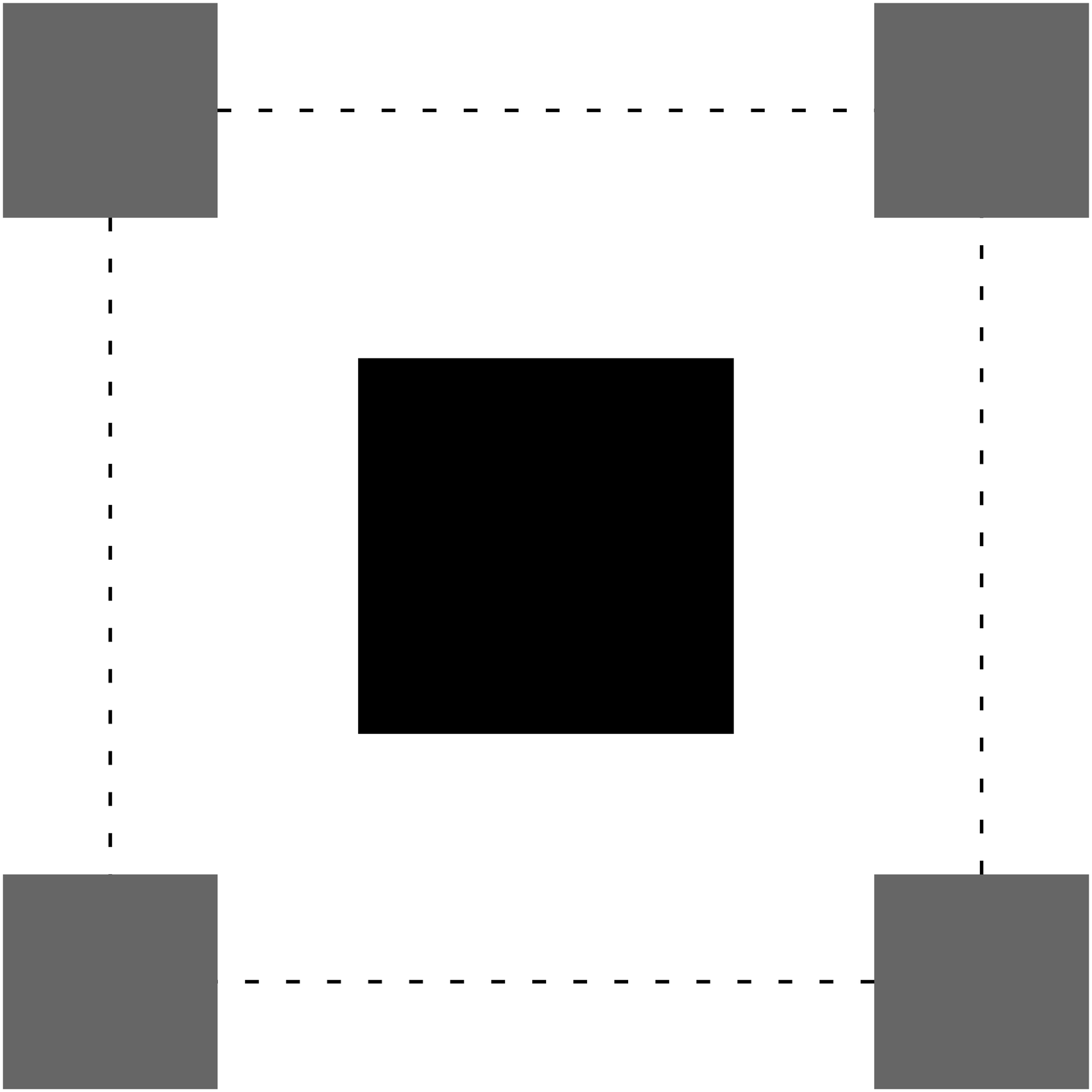}}
\caption{Tiles: (a) standard representation of the labels of a tile; (b) standard 
representation of a $\btile$; (c) dual representation of the labels of a tile; 
(d) dual representation of a $\btile$.}
\label{fig-2tile}
\end{figure}
\label{def group btile}

\item
The \emph{tile support} of a configuration $\eta$ is defined as
\begin{equation}
[\eta] = \bigcup_{p \in \setb(\eta) } \tile(p),
\end{equation}
where $\setb(\eta)$ is the set of particles of type $\tb$ in $\eta$. Obviously,
$[\eta]$ is the union of the tile supports of the clusters making up $\eta$.
\label{def group tile support}
For a standard cluster $c$ the \emph{dual perimeter}, denoted by $P(c)$, is the length of 
the Euclidean boundary of its tile support $[c]$ (which includes an inner boundary
when $c$ contains holes). The dual perimeter $P(\eta)$ of a $\btiled$ configuration 
$\eta$ is the sum of the dual perimeters of the clusters making up $\eta$.
\label{def group dual perimeter}

\item 
$\nbset$ is the set of configurations such that in $\Lambda^{--}$ the number of 
particles of type $\tb$ is $\nb$. $\molset$ is the set of configurations such that in 
$\Lambda^{--}$ the number of particles of type $\tb$ is $\nb$, the number of active 
bonds is $4\nb$, and there is no isolated particle of type $\ta$. In other words,
$\molset$ is the set of $\btiled$ configurations with $\nb$ particles of type $\tb$. 
The lower index $\star$ is used to indicate that configurations in these sets can 
have an arbitrary number of particles of type $\ta$.
\label{def group set}
A configuration $\eta$ is called \emph{standard} if $\eta \in \molset$, and its tile 
support is a standard polyomino in dual coordinates (see Definition~\ref{def standard
polyominoes} below for the definition of a standard polyomino).
\label{def group standard configuration}

\item
A \emph{unit hole} is an empty site such that all four of its neighbors are 
occupied by particles of the same type (either all of type $\ta$ or all of type $\tb$).
\label{def group unit hoyle}
An empty site with three neighboring sites occupied by a particle of type $\ta$ 
is called a \emph{good dual corner}. In the dual representation a good dual corner 
is a concave corner (see Fig.~\ref{fig:convex and concave corners}).
\label{def group good corner}

\end{list}


\subsection{A lemma on polyominoes}
\label{sec lemma polyominoes}

The tile support of a cluster $c$ can be represented by a polyomino, i.e., a finite 
union of unit squares. The following notation is used:
\begin{description} 
\item{$\ell_1(c)=$} 
width of $c$ (= number of columns).
\item{$\ell_2(c)=$} 
height of $c$ (= number of rows).
\item{$v_i(c)=$} 
number of vertical edges in the $i$-th non-empty row of $c$.
\item{$h_j(c)=$}
number of horizontal edges in the $j$-th non-empty column of $c$.
\item{$P(c)=$} 
length of the perimeter of $c$.
\item{$Q(c)=$} 
number of holes in $c$.
\item{$\psi(c)=$} 
number of convex corners of $c$.  
\item{$\phi(c)=$} 
number of concave corners of $c$.
\end{description} 

Note that $\psi(c) = \sum_{i=1}^{N(c)} \psi(i)$ and $\phi(c) = \sum_{i=1}^{N(c)} \phi(i)$, 
where $N(c)$ is the number of vertices in the polyomino representing $c$. If two edges 
$e_{1}$ and $e_{2}$ are incident to vertex $i$ at a right angle with a unit square inside 
and no unit squares outside, then $\psi(i) = 1$ and $\phi(i) = 0$ (Fig.~\ref{fig:convex
and concave corners}(a)). On the other hand, if there is no unit square inside and three 
unit squares outside, then $\psi(i) = 0$ and $\phi(i) = 1$ (Fig.~\ref{fig:convex and concave 
corners}(b)). If four edges $e_{1}$, $e_{2}$, $e_{3}$, $e_{4}$ are incident to vertex $i$, 
with two unit squares in opposite angles, then $\psi(i) = 0$ and $\phi(i) = 2$ 
(Fig.~\ref{fig:convex and concave corners}(c)).

\begin{figure}[htbp]
\centering
\subfigure[]
{\includegraphics[height=0.10\textwidth]{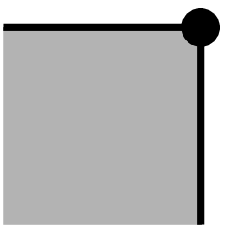}}
\qquad
\subfigure[]
{\includegraphics[height=0.10\textwidth]{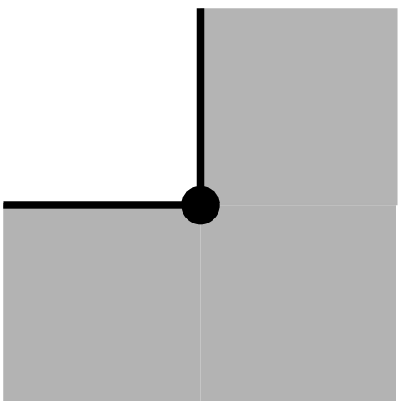}}
\qquad
\subfigure[]
{\includegraphics[height=0.10\textwidth]{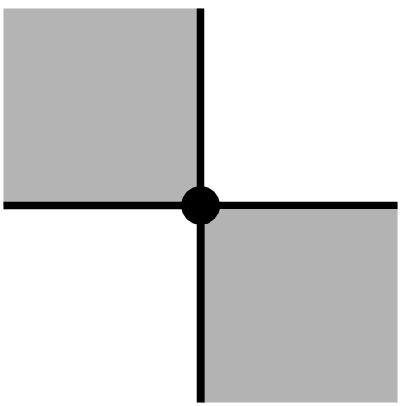}}
\caption{Corners of polyominoes: (a) one convex corner; (b) one concave corner; 
(c) two concave corners. Shaded mean occupied by a unit square.}
\label{fig:convex and concave corners}
\end{figure}

\begin{definition}
\label{def standard polyominoes}
{\rm [Alonso and Cerf~\cite{AC96}.]}
A polyomino is called monotone if its perimeter is equal to the perimeter of its 
circumscribing rectangle. A polyomino whose support is a quasi-square (i.e., a 
rectangle whose side lengths differ by at most one), with possibly a bar attached 
to one of its longest sides, is called a standard polyomino.
\end{definition}

In the sequel, a key role will be played by the quantity 
\begin{equation}
\label{Tdef}
\cT(c) = 2P(c) + [\psi(c) - \phi(c)] = 2P(c) + 4 - 4Q(c).
\end{equation}

\begin{lemma}
\label{lemma alpha optimality}
(i) All polyominoes $c$ with a fixed number of monominoes minimizing $\cT(c)$ are 
single-component monotone polyominoes of minimal perimeter, which include the standard 
polyominoes.\\
(ii) If the number of monominoes is $\ell^2$, $\ell^2-1$, $\ell(\ell-1)$ or $\ell(\ell-1)-1$
for some $\ell\in\N\backslash\{1\}$, then the standard polyominoes are the only minimizers 
of $\cT(c)$. 
\end{lemma}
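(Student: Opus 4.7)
My plan is to reduce the $\cT$-minimization to the classical polyomino isoperimetric problem, using the identity \eqref{Tdef} to force the minimizer to be single-component and hole-free, and then invoking the Alonso--Cerf structure theorem~\cite{AC96}. First I would extend the single-component identity $\psi(c) - \phi(c) = 4 - 4Q(c)$ recorded in \eqref{Tdef} to a polyomino $c$ with $C(c) \ge 1$ connected components and $Q(c) \ge 0$ holes, obtaining $\psi(c) - \phi(c) = 4\,C(c) - 4\,Q(c)$, which is a direct consequence of the Euler characteristic of the planar support (the total boundary turning angle is $2\pi(C-Q)$, with each convex corner contributing $+\pi/2$ and each concave corner contributing $-\pi/2$). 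Combined with \eqref{Tdef} this becomes
\begin{equation}
\cT(c) \;=\; 2P(c) + 4\,C(c) - 4\,Q(c).
\end{equation}

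Next I would produce a matching lower bound on $P(c)$. Decompose $P(c) = P_{\mathrm{out}}(c) + P_{\mathrm{hol}}(c)$, where $P_{\mathrm{out}}$ sums the outer perimeters of the components and $P_{\mathrm{hol}}$ sums the perimeters of all holes; since each hole has perimeter at least $4$, one has $P_{\mathrm{hol}}(c) \ge 4\,Q(c)$. If the component areas are $n_1,\dots,n_C$ with $\sum_i n_i = n$, and the total hole area inside component $i$ is $A_{h,i} \ge 0$, then filling the holes yields simply-connected polyominoes of areas $n_i + A_{h,i} \ge n_i$, and the polyomino isoperimetric inequality together with the monotonicity and subadditivity of $P_{\min}(n) = 2\lceil 2\sqrt{n}\,\rceil$ gives $P_{\mathrm{out}}(c) \ge \sum_i P_{\min}(n_i + A_{h,i}) \ge P_{\min}(n)$. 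Substituting back I get
\begin{equation}
\cT(c) \;\ge\; 2\,P_{\min}(n) + 4\,C(c) + 4\,Q(c) \;\ge\; 2\,P_{\min}(n) + 4,
\end{equation}
with equality forcing $C(c)=1$, $Q(c)=0$, and $P(c)=P_{\min}(n)$.

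Part~(i) then follows from the Alonso--Cerf structure theorem~\cite{AC96}, which characterizes the single-component hole-free polyominoes of area $n$ attaining $P_{\min}(n)$ as the monotone polyominoes of minimum perimeter and shows that the standard polyominoes are always among them. For part~(ii), Alonso and Cerf further show that when $n \in \{\ell^2,\,\ell^2-1,\,\ell(\ell-1),\,\ell(\ell-1)-1\}$ the only monotone polyominoes attaining $P_{\min}(n)$ are the standard ones; combined with the rigidity $C=1$, $Q=0$ above, this forces the $\cT$-minimizer to be standard. The main obstacle is making the bound $P_{\mathrm{out}}(c) \ge \sum_i P_{\min}(n_i + A_{h,i})$ rigorous in the presence of nested holes or holes adjacent to the outer boundary: one has to verify that filling the holes of each component really yields a simply-connected polyomino to which the polyomino isoperimetric inequality applies, and that the decomposition $P(c) = P_{\mathrm{out}}(c) + P_{\mathrm{hol}}(c)$ does not double-count. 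Once this planar bookkeeping is in place, the rest is a direct application of \eqref{Tdef} and the Alonso--Cerf results.
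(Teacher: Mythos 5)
Your proof is correct in substance, but it takes a genuinely different route from the paper's. The paper argues by \emph{compression}: it projects a non-monotone polyomino $c$ horizontally and then vertically onto a monotone polyomino $\tilde{c}$ of the same area, and shows $\cT(\tilde{c})<\cT(c)$ by splitting on whether $c$ has holes --- the key observation being that each hole forces at least $4$ extra units of perimeter beyond the circumscribing-rectangle bound $2[\ell_1(c)+\ell_2(c)]$, which exactly cancels the $-4Q(c)$ credit in $\cT$. That argument is self-contained apart from the Ferrers-diagram fact and Corollary~3.7 of Alonso--Cerf. You instead prove the sharp lower bound $\cT(c)\geq 2P_{\min}(n)+4$ directly, via the Euler-characteristic identity $\psi(c)-\phi(c)=4C(c)-4Q(c)$, the bound $P_{\mathrm{hol}}\geq 4Q$, and the monotonicity and subadditivity of $n\mapsto 2\lceil 2\sqrt{n}\,\rceil$, and then read off the rigidity $C=1$, $Q=0$, $P=P_{\min}(n)$ from the equality analysis. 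Your version buys two things: it makes explicit the minimal value of $\cT$, and it handles disconnected polyominoes honestly --- the paper dismisses these ``w.l.o.g.'' even though the strict multi-component inequality is precisely what is invoked later in \eqref{est1c} --- at the cost of importing the closed-form isoperimetric function and the characterization of its minimizers from the literature, plus the planar bookkeeping for nested or pinched holes that you correctly flag (that bookkeeping is routine: the turning-angle count and the decomposition of $P$ over boundary curves survive pinch points under the paper's convention that such vertices carry $\phi(i)=2$). One small point to make airtight: equality in your bound leaves you with connected, hole-free, minimal-perimeter polyominoes, and you should note (either by citing Alonso--Cerf or by the one-line projection argument) that these are automatically monotone, so that your conclusion matches the statement of part~(i).
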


\begin{proof}
In the proof we assume w.l.o.g.\ that the polyomino consists of a single cluster $c$.

\medskip\noindent
(i) The proof uses projection. Pick any non-monotone cluster $c$. Let 
\begin{equation}
\tilde{c} = (\pi_{2} \circ \pi_{1})(c),
\end{equation}
where $\pi_{2}$ and $\pi_{1}$ denote the vertical, respectively, the horizontal 
projection of $c$. The effect of vertical and horizontal projection is illustrated 
in Fig.~\ref{fig-projections}. By construction, $\tilde{c}$ is a monotone polyomino 
(see e.g.\ the statement on Ferrers diagrams in the proof of Alonso and Cerf \cite{AC96}, 
Theorem 2.2). 

\begin{figure}[htbp]
\centering
{\includegraphics[height=0.16\textwidth]{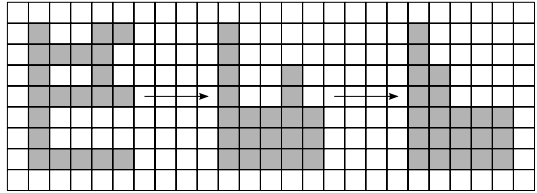}}
\caption{Effect of vertical and horizontal projection.}
\label{fig-projections}
\end{figure}

Suppose first that $Q(c)=0$. Then $\cT(c)=2P(c)+4$. Since $c$ is not monotone, 
we have $P(\tilde{c})<P(c)$, and so $c$ is not a minimizer of $\cT(c)$. 

Suppose next that $Q(c) \geq 1$. Since
\begin{equation}
\label{Pid1}
P(c) = \sum_{i=1}^{\ell_2(c)} v_i(c) + \sum_{j = 1}^{\ell_1(c)} h_j(c)
\end{equation}
and every hole belongs to at least one row and one column, we have
\begin{equation}
\label{pol1}
P(c) \geq 2[\ell_1(c) + \ell_2(c)] + 4Q(c).
\end{equation}
On the other hand, since $\tilde{c}$ is a monotone polyomino, we have $v_i(\tilde{c}) 
= h_j(\tilde{c}) = 2$ for all $i$ and $j$, and so
\begin{equation}
\label{pol2}
P(\tilde{c}) = 2[\ell_1(\tilde{c}) + \ell_2(\tilde{c})].
\end{equation}
Moreover, since $\ell_1(\tilde{c}) \leq \ell_1(c) $ and $\ell_2(\tilde{c}) \leq \ell_2(c)$, 
we can combine (\ref{pol1}--\ref{pol2}) to get
\begin{equation}
\label{eq minimal reduction of perimeter}
P(\tilde{c}) - P(c) \leq -4Q(c), 
\end{equation}
Using (\ref{eq minimal reduction of perimeter}), we obtain
\begin{equation}
\cT(\tilde{c}) - \cT(c)
= [2P(\tilde{c}) + 4] - [2P(c) + 4 - 4Q(c)]
= 2[P(\tilde{c}) - P(c)] + 4Q(c) \leq  -4Q(c) \leq -4 < 0,		
\end{equation}
and so $c$ is not a minimizer of $\cT(c)$.

\medskip\noindent
(ii) We saw in the proof of (i) that if $c$ is a minimizer of $\cT(c)$, then $c$ is monotone,
and hence does not contain holes and minimizes $P(c)$. The claim therefore follows from 
Alonso and Cerf~\cite{AC96}, Corollary 3.7, which states that if the number of monominoes 
is $\ell^2$, $\ell^2-1$, $\ell(\ell-1)$ or $\ell(\ell-1)-1$ for some $\ell\in\N\backslash\{1\}$, 
then the standard polyominoes are the only minimizers of $P(c)$. 
\end{proof}


\subsection{Relation between $\cT$ and the number of missing bonds in $\btiled$ clusters}
\label{sec bonds in tiled clusters}

In this section we consider $\btiled$ clusters and link the number of particles of type $\ta$ 
and type $\tb$ to the number of active bonds and the geometric quantity $\cT$ considered in 
Section~\ref{sec lemma polyominoes}.  

\begin{lemma}
\label{lemma number of bonds in a molecule cluster} 
For any $\btiled$ cluster $c$ (i.e., $c\in\molset$ for some $\nb$), $4\na(c) = B(c)+\cT(c)$ 
and $4\nb(c) = B(c)$.
\end{lemma}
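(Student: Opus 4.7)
The plan is to handle the two identities in sequence, with the second one being almost immediate and the first reducing to an Euler-style identity for the polyomino $[c]$.

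For the identity $4\nb(c)=B(c)$, the point is that in a $\btiled$ cluster every particle of type $\tb$ is saturated by definition, so it has exactly $4$ active bonds incident to it. Since active bonds exist only between particles of different types (see Definition~\ref{def group connected particles and bonds}, with the condition $\eta(i)\eta(j)=2$), each active bond has exactly one endpoint of type $\tb$. Summing the $4$ bonds over the $\nb(c)$ particles of type $\tb$ therefore counts each active bond exactly once, giving $B(c)=4\nb(c)$.

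For the identity $4\na(c)=B(c)+\cT(c)$, I would first set up the bijections that turn the combinatorial count into a statement about the dual polyomino $[c]$. In dual coordinates a $\btile$ is a unit square whose center is the $\tb$ particle and whose four corners are the neighboring $\ta$ particles. In a $\btiled$ cluster every $\tb$ particle is the center of such a tile, and every $\ta$ particle of the cluster lies at a corner of at least one of these tiles. Consequently $\nb(c)$ equals the number $N$ of unit squares of $[c]$ and $\na(c)$ equals the number $V$ of vertices of $[c]$. Moreover, the bonds incident to the $\ta$ particle at a vertex $v$ are in bijection with the unit squares of $[c]$ meeting at $v$, so $B(c)=\sum_v n_v = 4N$, which is consistent with the first identity.

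The remaining statement $4(\na(c)-\nb(c))=2P(c)+4-4Q(c)$ is exactly Euler's formula applied to $[c]$ viewed as a planar cell complex. Counting edges by summing $4$ over each square and noting that interior edges are shared by two squares while perimeter edges belong to only one gives the edge count $E=2N+P(c)/2$. Euler's formula for the planar graph obtained from $[c]$ (with $N$ bounded faces inside the polyomino, $Q(c)$ hole-faces and one outer face) reads $V-E+N=1-Q(c)$. Substituting the value of $E$ yields $V=N+P(c)/2+1-Q(c)$; multiplying by $4$ and using the second definition in \eqref{Tdef} gives $4\na(c)-4\nb(c)=2P(c)+4-4Q(c)=\cT(c)$, which combined with $B(c)=4\nb(c)$ is the first identity.

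There is no real obstacle: the one point that needs a little care is the bijection between $\ta$ particles and vertices of $[c]$, which relies on the fact that in a $\btiled$ cluster there are no ``extra'' particles of type $\ta$ outside the tiles of the $\tb$ particles (which is built into the definition of a cluster since any such particle would have to be connected to the cluster via an active bond, and hence sit at a junction of some $\btile$).
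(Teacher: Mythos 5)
Your proof is correct, but it follows a genuinely different route from the paper's. The paper argues \emph{locally}: it classifies the non-saturated particles of type $\ta$ on the boundary of $c$ into four classes according to their neighboring particles of type $\tb$, tallies for each class the missing bonds, the incident dual edges and the convex/concave corners, and deduces $\cT(c)=M(c)$ (the total number of missing bonds) from the first expression $\cT(c)=2P(c)+[\psi(c)-\phi(c)]$ in \eqref{Tdef}; since every active bond has exactly one endpoint of type $\ta$, $M(c)=4\na(c)-B(c)$ and the first identity follows. You instead argue \emph{globally} with the second expression $\cT(c)=2P(c)+4-4Q(c)$: after identifying particles of type $\tb$ with the unit squares of $[c]$ and particles of type $\ta$ with its vertices, the identity becomes Euler's formula for the cell complex $[c]$. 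This is shorter and bypasses the corner bookkeeping entirely, at the price of having to justify that the $1$-skeleton of $[c]$ is connected (true, since consecutive tiles along a bond path in the cluster share a corner) and that the faces are exactly the $N$ squares, the $Q(c)$ holes and the outer face. The two bijections you flag — every particle of type $\ta$ in the cluster sits at a junction site of some $\btile$, and the bonds at a vertex correspond to the squares of $[c]$ meeting there — are exactly the points that need care, and your justification of them is sound. Both arguments are valid; the paper's has the advantage of making the ``missing bonds'' interpretation of $\cT$ explicit, which is the heuristic it states up front, while yours makes transparent why the combination $2P+4-4Q$ is the natural one.
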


\begin{proof}
The claim of the lemma is equivalent to the affirmation that $\cT(c)=M(c)$ with $M(c)$ the 
number of missing bonds in $c$. Indeed, informally, for every unit perimeter two bonds are 
lost with respect to the four bonds that would be incident to each particle of type $\ta$ 
if it were saturated, while one bond is lost at each convex corner and one bond is gained at 
each concave corner. 

Formally, let $p$ be a particle of type $\ta$, $B(p)$ the number of bonds 
incident to $p$, and $M(p)=4-B(p)$ the number of missing bonds of $p$. Consider 
the set of particles of type $\ta$ at the boundary of a $\btiled$ cluster, i.e., 
the set of non-saturated particles of type $\ta$. Each of these particles belongs 
to one of four classes (see Fig.~\ref{fig-boundary_classes}):

\begin{description}
\item{class $1$}: 
$p$ has two neighboring particles of type $\tb$ belonging to the same $\abbar$.
\item{class $2$}: 
$p$ has two neighboring particles of type $\tb$ belonging to different $\abbars$.
\item{class $3$}: 
$p$ has three neighboring particles of type $\tb$.
\item{class $4$}: 
$p$ has one neighboring particle of type $\tb$.
\end{description}

\begin{figure}[htbp]
\centering
\subfigure[]
	{\includegraphics[height=0.12\textwidth]{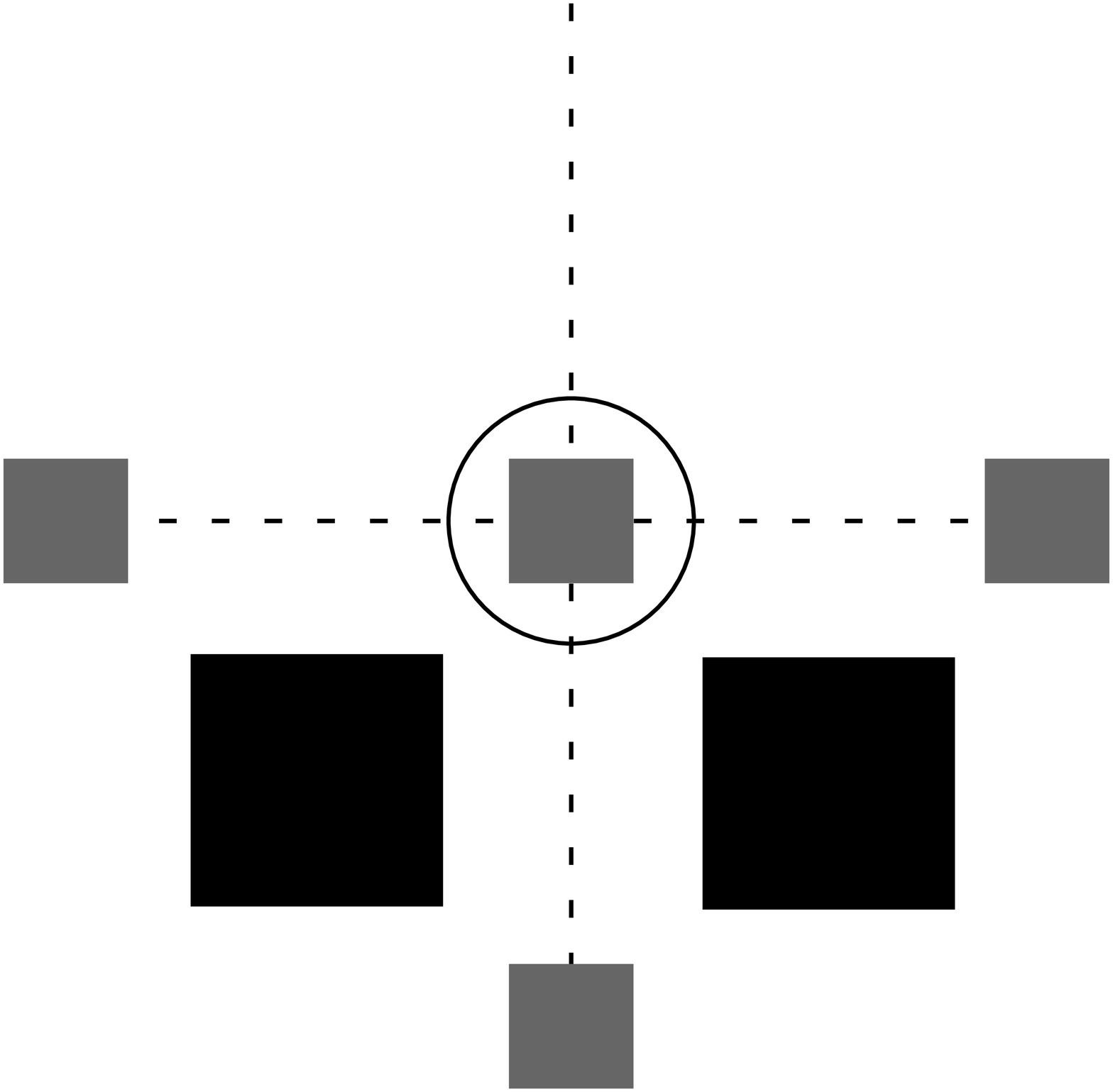}}
\qquad
\subfigure[]
	{\includegraphics[height=0.12\textwidth]{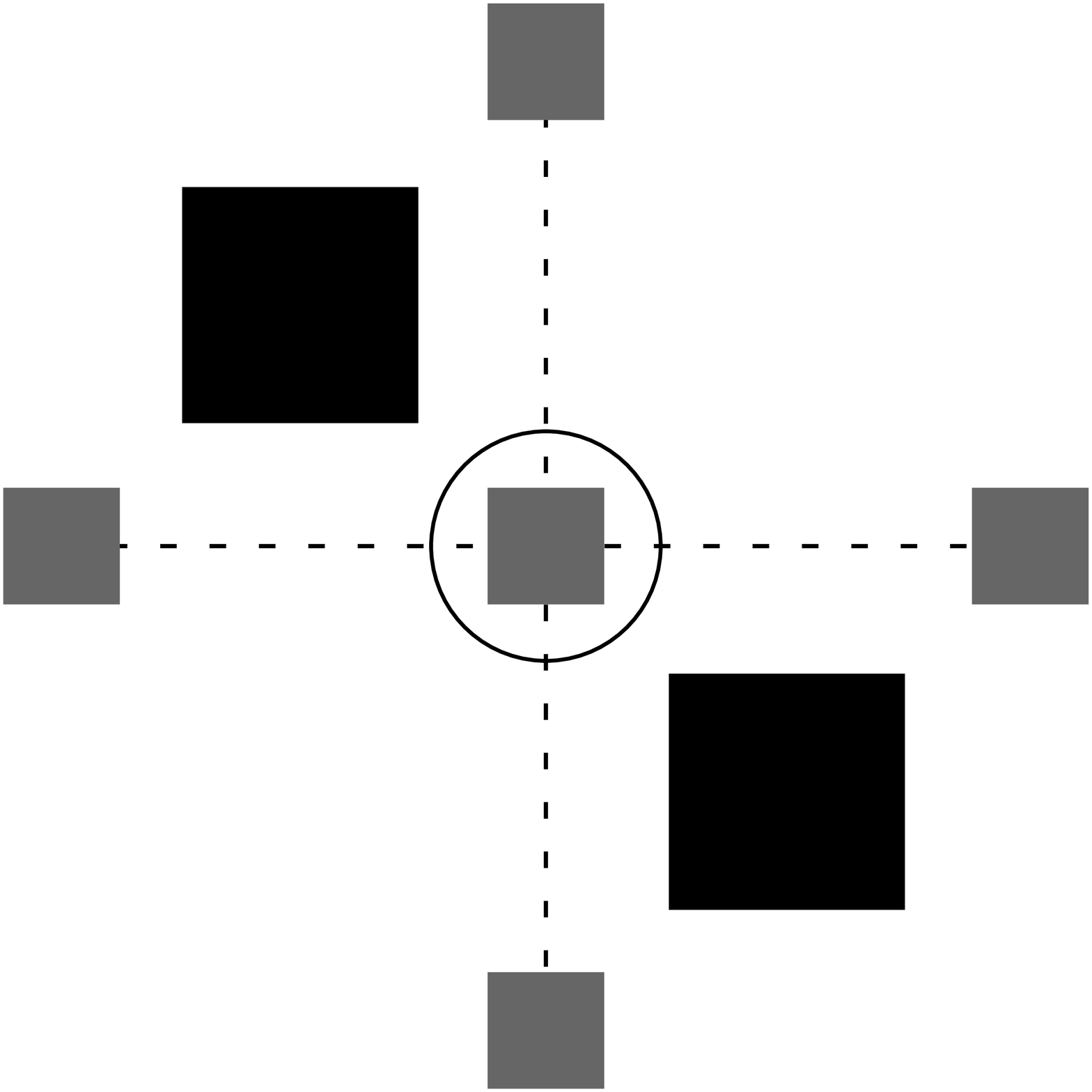}}
\qquad
\subfigure[]
	{\includegraphics[height=0.12\textwidth]{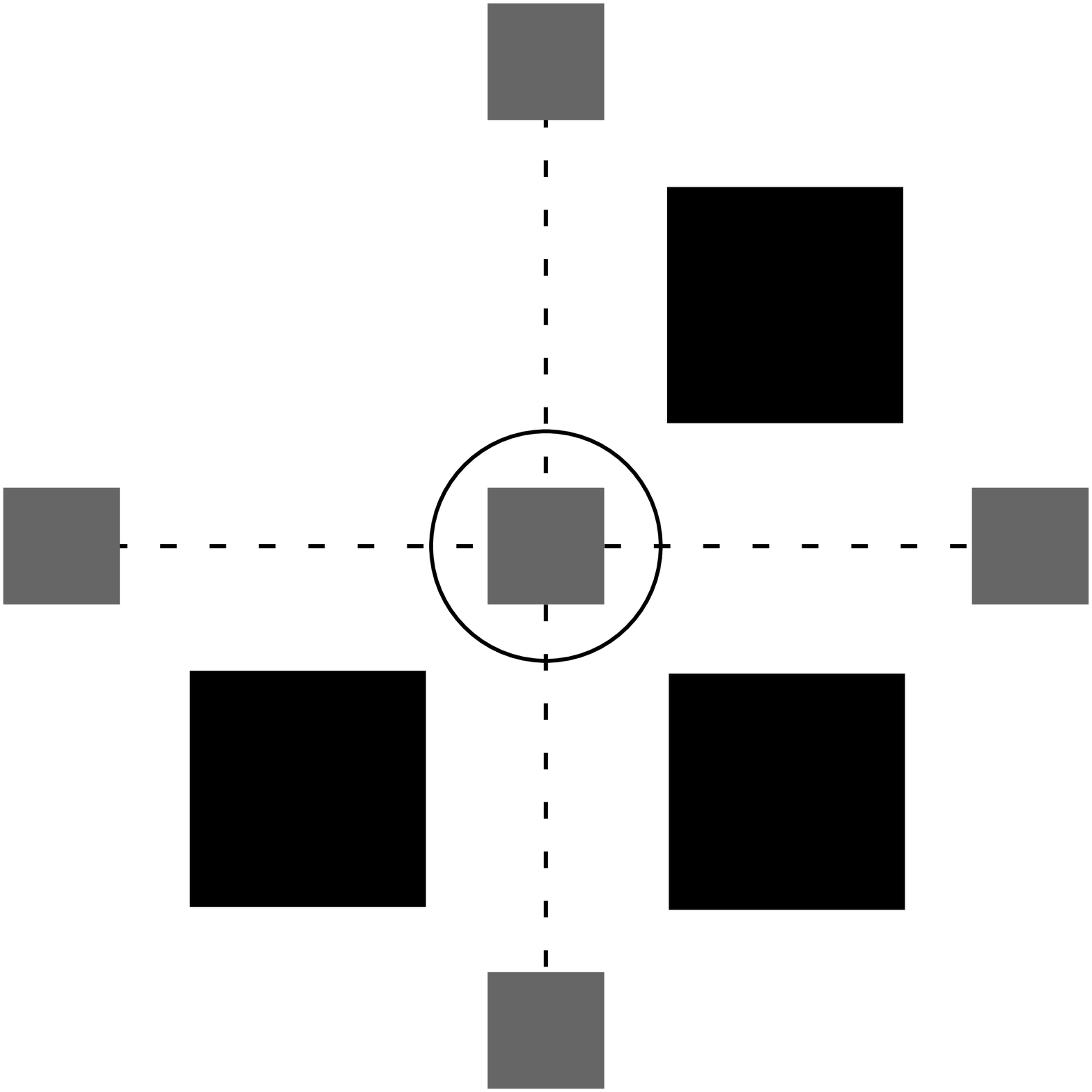}}
\qquad
\subfigure[]
	{\includegraphics[height=0.12\textwidth]{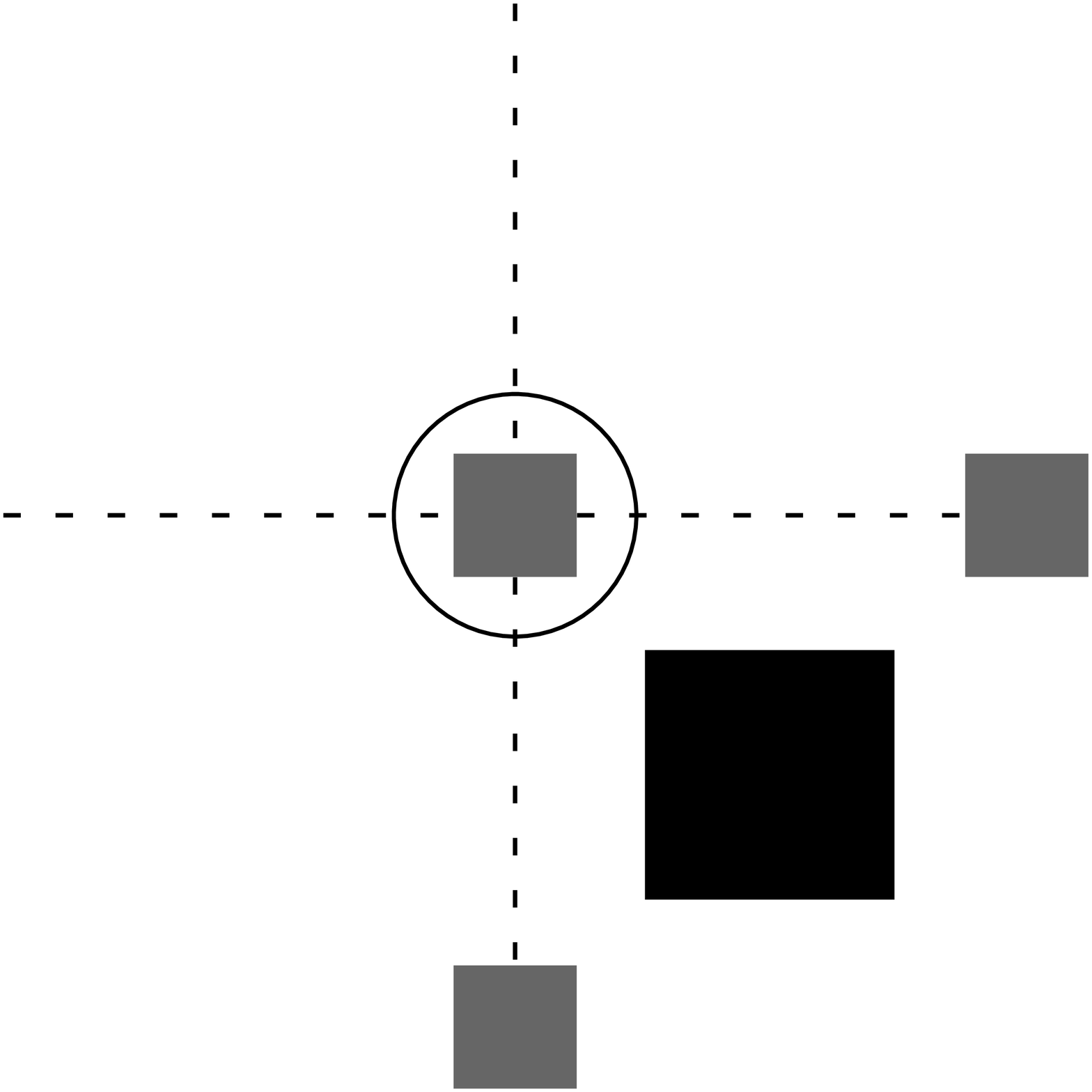}}
\caption{The circled boundary particle of type $\ta$ belongs to: (a) class $1$; (b) class $2$; 
(c) class $3$; (d) class $4$.}
\label{fig-boundary_classes}
\end{figure}
Let $M_k(c)$ be the number of missing bonds of particles of class $k$ in 
cluster $c$, and $A_k(c)$ the number of edges incident to particles of class 
$k$ in cluster $c$. Then
\begin{equation}
M_1(c) = 2,\,A_1(c) = 2; \quad 
M_2(c) = 2,\,A_2(c) = 4; \quad 
M_3(c) = 1,\,A_3(c) = 2; \quad 
M_4(c) = 3,\,A_4(c) = 2. 
\end{equation}
Let $N_k(c)$ be the number of particles of class $k$ of type $\ta$ in cluster $c$.
Observing that a cluster has two concave corners per particle of class $2$, one 
concave corner per particle of class $3$ and one convex corner per particle of 
class $4$, we can write
\begin{equation}
\label{eq claimed number of missing bond rewritten}
\cT(c) = 2P(c) - 2N_2(c) - N_3(c) + N_4(c).
\end{equation}
Since the dual perimeter of a cluster is equal to its total number of 
dual edges, we have
\begin{equation}
\label{eq dual perimeter in terms of particles of type 1}
2P(c) = \sum_{k=1}^4 A_k(c)N_k(c) = 2N_1(c) + 4N_2(c) + 2N_3(c) + 2N_4(c)
\end{equation}
(the sum counts each edge of the $\btile$ twice). The total number of missing 
bonds, on the other hand, is
\begin{equation}
\label{eq number of missing bonds}
M(c) = \sum_{k=1}^4 M_k(c)N_k(c) = 2N_1(c) + 2N_2(c) + N_3(c) + 3N_4(c).
\end{equation}
Combining (\ref{eq claimed number of missing bond rewritten}--\ref{eq number of missing bonds}), 
we arrive at $\cT(c)=M(c)$.
\end{proof}


\section{Proof of Theorem~\ref{theorem ground states}: identification of $\groundset$}
\label{sec ground states}

Recall that $\Lambdaminus$ (the part of $\Lambda$ where particles interact) is an 
$(L + \tfrac{1}{2}) \times (L + \tfrac{1}{2})$ dual square with $L>2\ell\starred$. 
Let $\eta_{\text{stab}},\eta_{\text{stab}}\prm$ be the configurations consisting of 
a $\btiled$ dual square of  size $L$ with even parity, respectively, odd parity. 
These two configurations have the same energy. Theorem~\ref{theorem ground states} 
says that $\groundset=\{\eta_{\text{stab}},\eta_{\text{stab}}\prm\}=\boxplus$. 
Section~\ref{sec standard configurations are optimal btiled configurations} contains 
two lemmas about $\btiled$ configurations with minimal energy. Section~\ref{sec
configurations of minimal energy} uses these two lemmas to prove Theorem~\ref{theorem
ground states}.


\subsection{Standard configurations  are minimizers among $\btiled$ configurations}
\label{sec standard configurations are optimal btiled configurations}

\begin{lemma}
\label{lemma optimal $2$-tiled configurations} 
Within $\molset$, the standard configurations achieve the minimal energy.
\end{lemma}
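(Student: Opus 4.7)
The plan is to reduce the minimization of $H$ on $\molset$ to the minimization of the geometric quantity $\cT$ on polyominoes, and then appeal directly to Lemma~\ref{lemma alpha optimality}(i).

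First, I sum the two identities of Lemma~\ref{lemma number of bonds in a molecule cluster} over the disjoint $\btiled$ clusters of an arbitrary $\eta\in\molset$, obtaining
\begin{equation}
B(\eta) = 4\nb \qquad \text{and} \qquad n_\ta(\eta) = \nb + \tfrac{1}{4}\,\cT(\eta),
\end{equation}
where $\cT(\eta)$ denotes the sum of $\cT(c)$ over the $\btiled$ clusters $c$ of $\eta$. Substituting these into the energy formula $H(\eta)=\Da\, n_\ta(\eta)+\Db\, n_\tb(\eta)-U\, B(\eta)$ gives
\begin{equation}
H(\eta) \;=\; \tfrac{\Da}{4}\,\cT(\eta) \,+\, (\Da+\Db-4U)\,\nb .
\end{equation}
Since $\nb$ is fixed on $\molset$ and $\Da>0$, minimizing $H$ on $\molset$ is equivalent to minimizing $\cT(\eta)$.

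Next, I observe that the tile support of $\eta\in\molset$ is a polyomino (in dual coordinates) with exactly $\nb$ monominoes, one per $\btile$, and that every such polyomino can be realized as the tile support of a $\btiled$ configuration (by placing a particle of type $\tb$ at the center of each monomino and particles of type $\ta$ at all junction sites). Hence the minimization problem is exactly the one treated in Lemma~\ref{lemma alpha optimality}(i), which guarantees that the minimum of $\cT$ is achieved by single-component monotone polyominoes of minimal perimeter and that the standard polyominoes lie in this class. By Definition of a standard configuration, standard configurations have standard polyominoes as tile supports, and so they attain the minimum energy on $\molset$.

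The only point that requires a brief check is the multi-cluster case: a configuration in $\molset$ may contain several disjoint $\btiled$ clusters, so a priori its tile support is a disconnected polyomino. Lemma~\ref{lemma alpha optimality}(i) already handles this, since its statement forces every minimizer to be single-component. Indeed $\cT$ is additive over components and each extra component contributes at least $+4$ via the identity $\cT(c)=2P(c)+4-4Q(c)$, so merging two components of fixed total area strictly decreases $\cT$. Beyond this short remark I expect no real obstacle; the argument amounts to the algebraic reduction above combined with the combinatorial input of Section~\ref{sec lemma polyominoes}.
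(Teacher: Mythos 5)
Your argument is correct and follows essentially the same route as the paper: use Lemma~\ref{lemma number of bonds in a molecule cluster} to express $n_\ta$ in terms of the fixed quantities $\nb$, $B=4\nb$ and the geometric functional $\cT$, observe that minimizing $H$ on $\molset$ then reduces to minimizing $\cT$, and conclude by Lemma~\ref{lemma alpha optimality}(i). Your additional remarks on realizability of polyominoes as tile supports and on the multi-component case are sound but only make explicit what the paper leaves implicit.
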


\begin{proof}
Recall from item~\ref{def group connected particles and bonds} in Section~\ref{sec def} 
that
\begin{equation}
\label{Hform}
H(\eta) = \Da\na(\eta)+ \Db\nb(\eta)-UB(\eta).
\end{equation}
In $\molset$ both $\nb$ and $B=4\nb$ are fixed, and hence $\min_{\eta\in\molset} H(\eta)$ 
is attained at a configuration minimizing $\na$. By Lemma~\ref{lemma number of bonds
in a molecule cluster}, if $\eta\in\molset$, then
\begin{equation}
\label{nanbform}
\na(\eta) = \tfrac14[B(\eta) + \cT(\eta)], \qquad \nb(\eta)=\tfrac14 B(\eta).
\end{equation}
Hence, to minimize $\na(\eta)$ we must minimize $\cT(\eta)$. The claim therefore follows 
from Lemma~\ref{lemma alpha optimality}(i).
\end{proof}

For a standard configuration the computation of the energy is straightforward. For 
$\ell\in\N$, $\zeta\in\{0,1\}$ and $k\in\N_0$ with $k \leq \ell+\zeta$, let 
$\eta^{\ell,\zeta,k}$ denote the standard configuration consisting of an 
$\ell\times(\ell+\zeta)$ (quasi-)square with a bar of length $k$ attached to
one of its longest sides (see Fig.~\ref{fig-standardconf}).

\begin{figure}[htbp]
\begin{centering}
\includegraphics[width=0.2\textwidth]{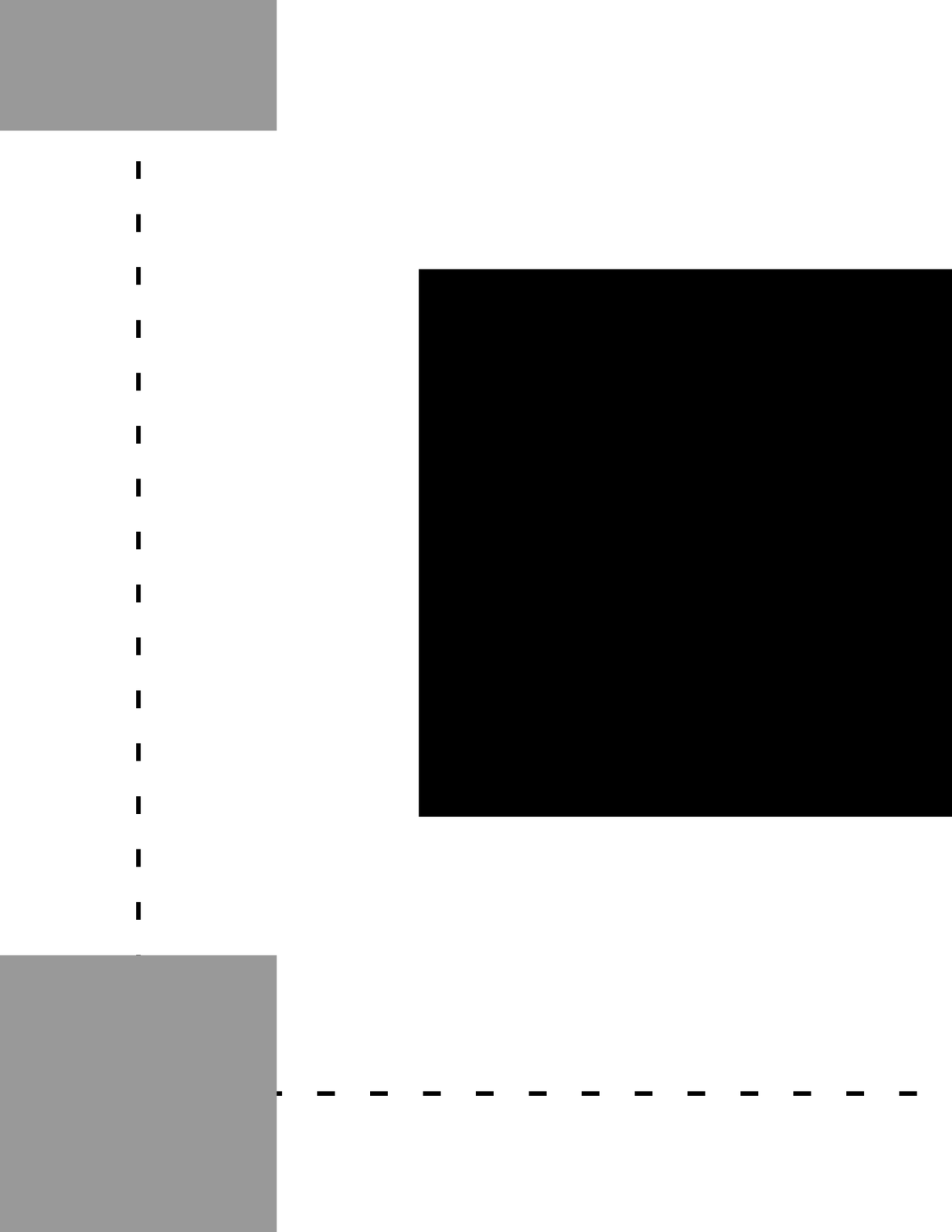}
\par\end{centering}
\caption{A standard configuration with $\ell = 7, \zeta = 1$ and $k = 5$.}
\label{fig-standardconf}
\end{figure}

\begin{lemma}
\label{lemma energy of btiled standard configurations}
The energy of $\eta^{\ell,\zeta,k}$ is (recall {\rm \eqref{epsdef}})
\begin{equation}
\label{eq energy of bitiled standard configuration}
H(\eta^{\ell,\zeta,k}) = -\epsi[\ell(\ell+\zeta) + k] 
+ \Da[\ell + (\ell + \zeta) + 1 + 1_{\{k > 0\}}].
\end{equation}
\end{lemma}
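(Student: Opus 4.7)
The plan is to compute the three quantities appearing in the Hamiltonian (\ref{Hform}) — namely $\nb$, $\na$, and $B$ — directly from the geometry of the standard polyomino, and then combine them using the identity $4U - \Db = \Da + \epsi$ that follows from (\ref{epsdef}).

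First, I would read off $\nb(\eta^{\ell,\zeta,k}) = \ell(\ell+\zeta) + k$, since the particles of type $\tb$ are in one-to-one correspondence with the unit squares of the quasi-square plus attached bar that make up the tile support. Because $\eta^{\ell,\zeta,k}$ is standard and hence lies in $\molset$, every particle of type $\tb$ is saturated, giving $B(\eta^{\ell,\zeta,k}) = 4\nb$ and allowing us to apply Lemma~\ref{lemma number of bonds in a molecule cluster} to obtain $4\na = B + \cT$. So the only nontrivial ingredient is $\cT(\eta^{\ell,\zeta,k})$.

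Next I would compute $\cT$ via (\ref{Tdef}). The standard polyomino has no holes, so $Q = 0$ and $\cT = 2P+4$. The perimeter of the $\ell \times (\ell+\zeta)$ quasi-square is $2[\ell+(\ell+\zeta)]$; attaching a bar of length $k \geq 1$ to one of its longest sides removes a segment of length $k$ from the exposed boundary but adds back three sides of the bar (one of length $k$ and two of length $1$), for a net increase of $2$. Hence
\begin{equation}
P(\eta^{\ell,\zeta,k}) = 2[\ell+(\ell+\zeta)] + 2\cdot 1_{\{k>0\}},
\qquad
\cT(\eta^{\ell,\zeta,k}) = 4\bigl[\ell+(\ell+\zeta)+1+1_{\{k>0\}}\bigr].
\end{equation}
This yields $\na = \nb + \ell + (\ell+\zeta) + 1 + 1_{\{k>0\}}$.

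Finally, I would substitute into $H = \Da\na + \Db\nb - UB = \Da\na - (4U-\Db)\nb$ and use $4U - \Db = \Da + \epsi$ to write $H = \Da(\na-\nb) - \epsi\,\nb$. Plugging in the expressions for $\na - \nb$ and $\nb$ gives (\ref{eq energy of bitiled standard configuration}) immediately. There is no real obstacle here — the statement is essentially a bookkeeping identity — so the only care is in the perimeter count when $k \in \{0,1\}$ (ensuring the indicator $1_{\{k>0\}}$ arises correctly) and in not double-counting the bond/missing-bond contributions, both of which are handled cleanly by invoking Lemma~\ref{lemma number of bonds in a molecule cluster} rather than counting active bonds by hand.
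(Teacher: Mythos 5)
Your proposal is correct and follows essentially the same route as the paper: compute $P$ and $Q$ for the standard polyomino to get $\cT(\eta^{\ell,\zeta,k})=4[\ell+(\ell+\zeta)+1+1_{\{k>0\}}]$, use saturation and Lemma~\ref{lemma number of bonds in a molecule cluster} to express $\na$ and $B$ in terms of $\nb$ and $\cT$, and substitute into the Hamiltonian (your rearrangement $H=\Da(\na-\nb)-\epsi\nb$ is the same identity as the paper's $H=-\tfrac14\epsi B+\tfrac14\Da\cT$). No gaps.
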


\begin{proof}
Note that $P(\eta^{\ell,\zeta,k})=2[\ell+(\ell+\zeta)+1_{\{k > 0\}}]$ 
and $Q(\eta^{\ell,\zeta,k})=0$, so that 
\begin{equation}
\label{Tid}
\cT(\eta^{\ell,\zeta,k})=4[\ell+(\ell+\zeta)+1+1_{\{k > 0\}}]. 
\end{equation}
Also note that
\begin{equation}
\label{Bid}
B(\eta^{\ell,\zeta,k}) = 4[\ell+(\ell+\zeta)+k],
\end{equation}
because all particles of type $\tb$ are saturated. However, by (\ref{Hform}--\ref{nanbform}), 
we have 
\begin{equation}
\label{Hformalt}
H(\eta^{\ell,\zeta,k}) = -\tfrac14\epsi B(\eta^{\ell,\zeta,k})
+ \tfrac14\cT(\eta^{\ell,\zeta,k})\Da,
\end{equation} 
and so the claim follows by combining (\ref{Tid}--\ref{Hformalt}).
\end{proof}

Note that the energy increases by $\Da-\epsi$ (which is $>0$ if and only if  
$\ell\starred\geq 2$ by (\ref{eq value of critical length})) when a bar of 
length $k=1$ is added, and decreases by $\epsi$ each time the bar is extended. 
Note further that
\begin{equation}
\label{Hjumps}
H(\eta^{\ell,1,0})-H(\eta^{\ell,0,0}) = \Da-\ell\epsi, 
\qquad
H(\eta^{\ell+1,0,0})-H(\eta^{\ell,1,0}) = \Da-(\ell+1)\epsi, 
\end{equation}
which show that the energy of a growing sequence of standard configurations 
goes up when $\ell<\ell\starred$ and goes down when $\ell\geq\ell\starred$. 
The highest energy is attained at $\eta^{\ell\starred-1,1,1}$, which is the 
critical droplet in Fig.~\ref{fig-critical_droplet}.

It is worth noting that $H(\eta_{s}^{2\ell\starred,0,0}) < 0$, i.e., the energy 
of a dual square of side length $2\ell\starred$ is lower than the energy of 
$\Box$. This is why we assumed $L>2\ell\starred$, to allow for $H(\boxplus)<H(\Box)$.


\subsection{Stable configurations}
\label{sec configurations of minimal energy}

In this section we use Lemmas~\ref{lemma optimal $2$-tiled configurations}--\ref{lemma 
energy of btiled standard configurations} to prove Theorem~\ref{theorem ground states}.

\begin{proof} 
Let $\eta$ denote any configuration in $\groundset$. Below we will show that:
\begin{itemize}
\item[(A)] $\eta$ does not contain any particle in $\partial^-\Lambda$.
\item[(B)] $\eta$ is a $\btiled$ configuration, i.e., $\eta\in\molset$ for 
some $\nb$ ($= \nb(\eta)$).
\end{itemize}
Once we have (A) and (B), we observe that $\eta$ cannot contain a number of $\btiles$ 
larger than $L^{2}$. Indeed, consider the tile support of $\eta$. Since $\Lambdaminus$ 
is an $(L+\tfrac12) \times (L+\tfrac12)$ dual square, if the tile support of $\eta$ fits 
inside $\Lambdaminus$, then so does the dual circumscribing rectangle of $\eta$. But any 
rectangle of area $\ge L^2$ has at least one side of length $L + 1$. Hence $\nb(\eta) 
\leq L^{2}$, and therefore the number of $\btiles$ in $\eta$ is at most $L^{2}$. By 
Lemmas~\ref{lemma optimal $2$-tiled configurations}--\ref{lemma energy of btiled standard
configurations}, the global minimum of the energy is attained at the largest dual quasi-square 
that fits inside $\Lambdaminus$, since $L > 2 \ell\starred$. We therefore conclude that
$\eta\in\{\eta_{\text{stab}}, \eta_{\text{stab}}\prm\}$, which proves the claim. 

\medskip\noindent
\underline{Proof of (A)}. 
Since in $\partial^{-}\Lambda$ particles do not feel any 
interaction but have a positive energy cost, removal of a particle from 
$\partial^{-}\Lambda$ always lowers the energy. 

\medskip\noindent
\underline{Proof of (B)}. 
We note the following three facts:
\begin{enumerate}
\item[(1)] 
$\eta$ does not contain isolated particles of type $\ta$.
\item[(2)] 
$\partial^{-}\Lambdaminus$ does not contain any particle of type $\tb$.
\item[(3)] 
All particles of type $\tb$ in $\eta$ have all their neighboring sites 
occupied by a particle.
\end{enumerate}
For (1), simply note that the configuration obtained from $\eta$ by removing isolated 
particles has lower energy. For (2), note that particles in $\partial^-\Lambdaminus$ 
have at most two active bonds. Therefore, if $\eta$ would have a particle of type 
$\tb$ in $\partial^-\Lambdaminus$, then the removal of that particle would lower 
the energy, because $\Db - \Da > 2U$ and $\Da > 0$ (recall (\ref{subpropmetreg})) 
imply $\Db > 2U$. For (3), note that if a particle of type $\tb$ has an empty neighboring 
site, then the addition of a particle of type $\ta$ at this site lowers the energy, 
because $\Da < U$ (recall (\ref{subpropmetreg})).

We can now complete the proof of (B) as follows. The constraint $\Db - \Da > 2U$ implies 
that any particle of type $\tb$ in $\eta$ must have at least three neighboring sites 
occupied by a particle of type $\ta$. Indeed, the removal of a particle of type $\tb$ 
with at most two active bonds lowers the energy. But the fourth neighboring site 
must also be occupied by a particle of type $\ta$. Indeed, suppose that this site 
would be occupied by a particle of type $\tb$. Then this particle would have at most 
three active bonds. Consider the configuration $\tilde{\eta}$ obtained from $\eta$ after 
replacing this particle by a particle of type $\ta$. Then $B(\tilde{\eta})-B(\eta)\geq -2$, 
$\na(\tilde{\eta})-\na(\eta)=1$ and $\nb(\tilde{\eta})-\nb(\eta)=-1$. Consequently, 
$H(\tilde{\eta})-H(\eta)\leq \Da-\Db + 2U < 0$. Hence, any particle of type $\tb$ in 
$\eta$ must be saturated.
\end{proof}


\section{Proof of Theorem~\ref{theorem communication height}: identification
of $\Gamma\starred=\comlev(\Box, \boxplus)$}
\label{sec identification of gamma}

In Section~\ref{sec id sub} we prove Theorem~\ref{theorem communication height} subject 
to the following lemma.

\begin{lemma}
\label{lemma key}
For any $n_\tb \leq L^{2}$, the configurations of minimal energy with $\nb$ particles
of type $\tb$ belong to $\molset$, i.e., are $\btiled$ configurations.  
\end{lemma}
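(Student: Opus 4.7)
The plan is to argue that any minimizer $\eta$ of $H$ subject to $\nb(\eta)=\nb$ must already satisfy the three defining conditions of $\molset$: exactly $\nb$ particles of type $\tb$, $4\nb$ active bonds, and no isolated $\ta$-particle. The first is given; I shall establish the other two by iteratively applying local modifications that preserve $\nb(\eta)$ and strictly lower $H(\eta)$ whenever $\eta\notin\molset$. The strategy parallels the proof of Theorem~\ref{theorem ground states}(B), except that the only operations allowed on $\tb$-particles are relocations, not deletions.

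I would first clean up the $\ta$-particles, as in parts~(1) and~(3) of that proof. Removing an isolated $\ta$-particle lowers $H$ by $\Da$ and preserves $\nb$, so in $\eta$ no $\ta$-particle is isolated; in particular, none lies in $\partial^-\Lambda$ (where active bonds cannot exist). Conversely, adding an $\ta$-particle at any empty site $j\in\Lambda^-$ adjacent to a $\tb$-particle at some $i\in\Lambda^-$ creates at least one active bond and changes $H$ by at most $\Da-U<0$ using~\eqref{subpropmetreg}. Hence in $\eta$ every $\tb$-particle in $\Lambda^-$ has its $\Lambda^-$-neighbors occupied, and every $\ta$-particle participates in at least one active bond.

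I would then rule out the two remaining obstructions to $\eta\in\molset$: (i) $\tb$-particles sitting in $\partial^-\Lambda$ or in the outermost layer of $\Lambda^-$, which cannot be saturated; and (ii) adjacent pairs of $\tb$-particles, at least one of which has at most three active bonds. In each case the offending $\tb$-particle has strictly fewer than four active bonds. The relocation move is to remove this particle and place it at an empty site $v\in\Lambda^{--}$ having strictly more $\ta$-neighbors than its current position, after which one re-applies the $\ta$-cleanup to discard any $\ta$-particles that have become isolated. The net change in $H$ is a negative multiple of $U$, plus a nonpositive clean-up contribution, and the condition $\Db-\Da>2U$ from~\eqref{subpropmetreg} ensures the bookkeeping remains favorable regardless of how many $\ta$-particles have to be created or destroyed around the displaced $\tb$-particle.

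The main obstacle is to certify that an admissible relocation target $v\in\Lambda^{--}$ is available whenever $\eta\notin\molset$. This is where the hypothesis $\nb\le L^2$ enters decisively: $\Lambda^{--}$ is itself large enough to host a $\btiled$ configuration with up to $L^2$ particles of type $\tb$ together with their surrounding $\ta$-shell, so whenever $\eta$ still contains a misplaced $\tb$-particle or a $\tb$-$\tb$ adjacency, $\Lambda^{--}$ must still contain empty sites adjacent to at least one $\ta$-particle of $\eta$ after the Step~1 clean-up. A short case analysis of the cluster boundary, supplemented by a pigeonhole argument on the area of $\Lambda^{--}$, yields this. Iterating both steps to exhaustion brings $\eta$ into $\molset$, which is what the lemma asserts.
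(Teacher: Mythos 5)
Your first step (adding particles of type $\ta$ at empty neighbors of particles of type $\tb$, and discarding isolated particles of type $\ta$) is sound and is essentially half of what the paper does. The genuine gap is in your second step, the relocation of an under-saturated particle of type $\tb$. Your move requires a target site $v$ with \emph{strictly more} neighbors of type $\ta$ than the current position. For a particle of type $\tb$ with three active bonds (e.g.\ one member of a lattice-adjacent $\tb$--$\tb$ pair after the clean-up), this forces $v$ to have four neighbors of type $\ta$, i.e.\ to be a unit hole inside a region of particles of type $\ta$; such a site generically does not exist, and a pigeonhole argument on the area of $\Lambda^{--}$ cannot produce one, since it only yields empty sites, not empty sites with a prescribed neighborhood. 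The fallback of relocating to a sparsely occupied site and then creating the missing neighbors of type $\ta$ does not have favorable bookkeeping either: removing a particle of type $\tb$ with $k$ active bonds and re-installing it at a site with $k'$ pre-existing neighbors of type $\ta$, then saturating it, changes the energy by $(k-4)U+(4-k')\Da$ (the $\Db$ terms cancel), which for $k=3$, $k'=0$ equals $4\Da-U>0$ whenever $\Da>U/4$. So your claim that $\Db-\Da>2U$ keeps the bookkeeping favorable ``regardless of how many $\ta$-particles have to be created or destroyed'' is false: the quantity that matters does not contain $\Db$ at all. Concretely, for two lattice-adjacent particles of type $\tb$, each with three private neighbors of type $\ta$, your catalogue of moves contains no strictly improving step, even though this configuration is not a minimizer.

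The paper sidesteps the problem by never touching the particles of type $\tb$. For a single cluster (Lemma~\ref{lemma optimality of btiled clusters}) it passes in one global step to the configuration $\tilde{\eta}$ obtained by occupying with a particle of type $\ta$ \emph{every} empty neighbor of a particle of type $\tb$; each added particle contributes at least one new bond, so $B(\tilde{\eta})-B(\eta)\ge\na(\tilde{\eta})-\na(\eta)>0$ and $H(\tilde{\eta})-H(\eta)\le(\Da-U)[\na(\tilde{\eta})-\na(\eta)]<0$, which lands in $\molset$ without any local surgery on particles of type $\tb$. For several clusters (Lemma~\ref{lemma optimality fixed number of particles of type tb}) it compares with a single standard configuration via the identity $4\na=B+\cT$ of Lemma~\ref{lemma number of bonds in a molecule cluster} and the strict subadditivity of $\cT$ from Lemma~\ref{lemma alpha optimality}. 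If you insist on a local-moves proof, you would need compound moves that combine the relocation with the guaranteed clean-up gains around the vacated site (filling it with a particle of type $\ta$ adjacent to the remaining particle of type $\tb$, deleting the orphaned particles of type $\ta$) and verify the sign of the total in every case; as written, the proposal does not do this.
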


\noindent
The proof of this lemma is given in Section~\ref{sec proof of lemma key}.


\subsection{Proof of Theorem~\ref{theorem communication height} subject to
Lemma~\ref{lemma key}}
\label{sec id sub}

\begin{proof}
For $\cY\subset\cX$, define the \emph{external boundary} of $\cY$ by $\boundary\cY=\{\eta\in\cX
\backslash\cY\colon\,\exists\eta\prm\in\cY,\,\eta\leftrightarrow\eta\prm \}$ and the 
\emph{bottom} of $\cY$ by $\bottom(\cY)=\arg\min_{\eta\in\cY} H(\eta)$. According to 
Manzo, Nardi, Olivieri and Scoppola~\cite{MNOS04}, Section 4.2, $\comlev(\Box,\boxplus) 
= \min_{\eta\in\boundary\cB} H(\eta)$ for $\cB\subset\cX$ any (!) set with the following 
properties:
\begin{enumerate} 
\item[(I)]
$\cB$ is connected via allowed moves, $\Box\in\cB$ and $\boxplus\notin\cB$. 
\item[(II)]
There is a path $\refpath\colon\,\Box\rightarrow\boxplus$ such that $\left\{\arg\max_{\eta 
\in \refpath} H(\eta)\right\}\cap\bottom(\boundary\cB) \neq \emptyset$.
\end{enumerate}
Thus, our task is to find such a $\cB$ and compute the lowest energy of $\boundary\cB$.

For (I), choose $\cB$ to be the set of all configurations $\eta$ such that $\nb(\eta) 
\leq\ell\starred(\ell\starred-1)+1$. Clearly this set is connected, contains $\Box$ and 
does not contain $\boxplus$.

For (II), choose $\refpath$ as follows. A particle of type $\tb$ is brought inside $\Lambda$
($\D H=\Db$), moved to the origin and is saturated by four times bringing a particle of type 
$\ta$ ($\D H=\Da$) and attaching it to the particle of type $\tb$ ($\D H=-U$). After 
this first $\btile$ has been completed, $\refpath$ follows a sequence of increasing 
$\btiled$ dual quasi-squares. The passage from one quasi--square to the next is obtained 
by \emph{adding} a $\abbar$ to one of the longest sides, as follows. First a particle of 
type $\tb$ is brought inside $\Lambda$ ($\D H = \Db$) and is attached to one of the longest 
sides of the quasi-square ($\D H = - 2U$). Next, twice a particle of type $\ta$ is brought 
inside the box ($\D H = \Da$) and is attached to the (not yet saturated) particle of type 
$\tb$ ($\D H = - U$) in order to complete a $\btiled$ protuberance. Finally, the $\abbar$ 
is completed by bringing a particle of type $\tb$ inside $\Lambda$ ($\D H = \Db$), moving 
it to a concave corner ($\D H = - 3U$), and saturating it with a particle of type $\ta$ 
($\D H = \Da$, respectively, $\D H = -U$). It is obvious that $\refpath$ eventually hits 
$\boxplus$. The path $\refpath$ is referred to as the \emph{reference path for the 
nucleation}.

Call $\eta\starred$ the
configuration in $\refpath$ consisting of an $\ell\starred
\times (\ell\starred - 1)$ quasi-square, a $\btiled$ protuberance attached to one 
of its longest sides, and a free particle of type $\tb$ (see Fig.~\ref{fig-critconf};
there are many choices for $\refpath$
depending on where the $\btiled$ protuberances are added; all these choices are equivalent.
Note that, in the notation of
Lemma~\ref{lemma
energy of btiled standard configurations},
$\eta\starred=\eta^{\ell\starred-1,1,1} + \freeb$,
where $+ \freeb$ denotes the addition of a free particle of type $\tb$. 
Observe that:
\begin{itemize} 
\item[(a)] 
$\refpath$ exits $\cB$ via the configuration $\eta\starred$; 
\item[(b)] 
$\eta\starred\in\bottom(\boundary\cB)$; 
\item[(c)] 
$\eta\starred \in \left\{\arg\max_{\eta\in\refpath} H(\eta) \right\}$.
\end{itemize} 
Observation (a) is obvious, while (b) follows from Lemmas~\ref{lemma optimal $2$-tiled
configurations} and \ref{lemma key}. To see (c), note the following: (1) The total energy 
difference obtained by adding a $\abbar$ of length $\ell$ on the side of a $\btiled$ cluster 
is $\D H(\text{adding a } \abbar) =  \Da-\epsi\ell$, which changes sign at $\ell=\ell\starred$ 
(recall \eqref{Hjumps}); (2) The configurations of maximal energy in a sequence of growing 
quasi-squares are those where a free particle of type $\tb$ enters the box after the $\btiled$ 
protuberance has been completed. Thus, within energy barrier $2 \Da + 2 \Db - 4U=4U-\epsi$ 
the $\abbar$ is completed downwards in energy. This means that, after configuration $\eta\starred$ 
is hit, the dynamics can reach the $\btiled$ dual square of $\ell\starred \times \ell\starred$ 
while staying below the energy level $H(\eta\starred)$. Since all $\btiled$ dual quasi-squares 
larger than $\ell\starred \times (\ell\starred - 1)$ have an energy smaller than that of the
$\btiled$ dual quasi-square $\ell\starred \times (\ell\starred - 1)$ itself, the path $\refpath$ 
does not again reach the energy level $H(\eta\starred$). 

Because of (a--c), we have $\comlev(\Box,\boxplus) = H(\eta\starred)$. To complete the proof, 
use Lemma~\ref{lemma energy of btiled standard configurations} to compute
\begin{equation}
H(\eta\starred) = H(\eta^{\ell\starred-1,1,1} + \freeb) = -\epsi[\ell\starred(\ell\starred-1)+1] 
+ \Da(2 \ell\starred+1) +\Db.
\end{equation}
\end{proof}

\begin{figure}[htbp]
\begin{centering}
\includegraphics[width=0.35\textwidth]{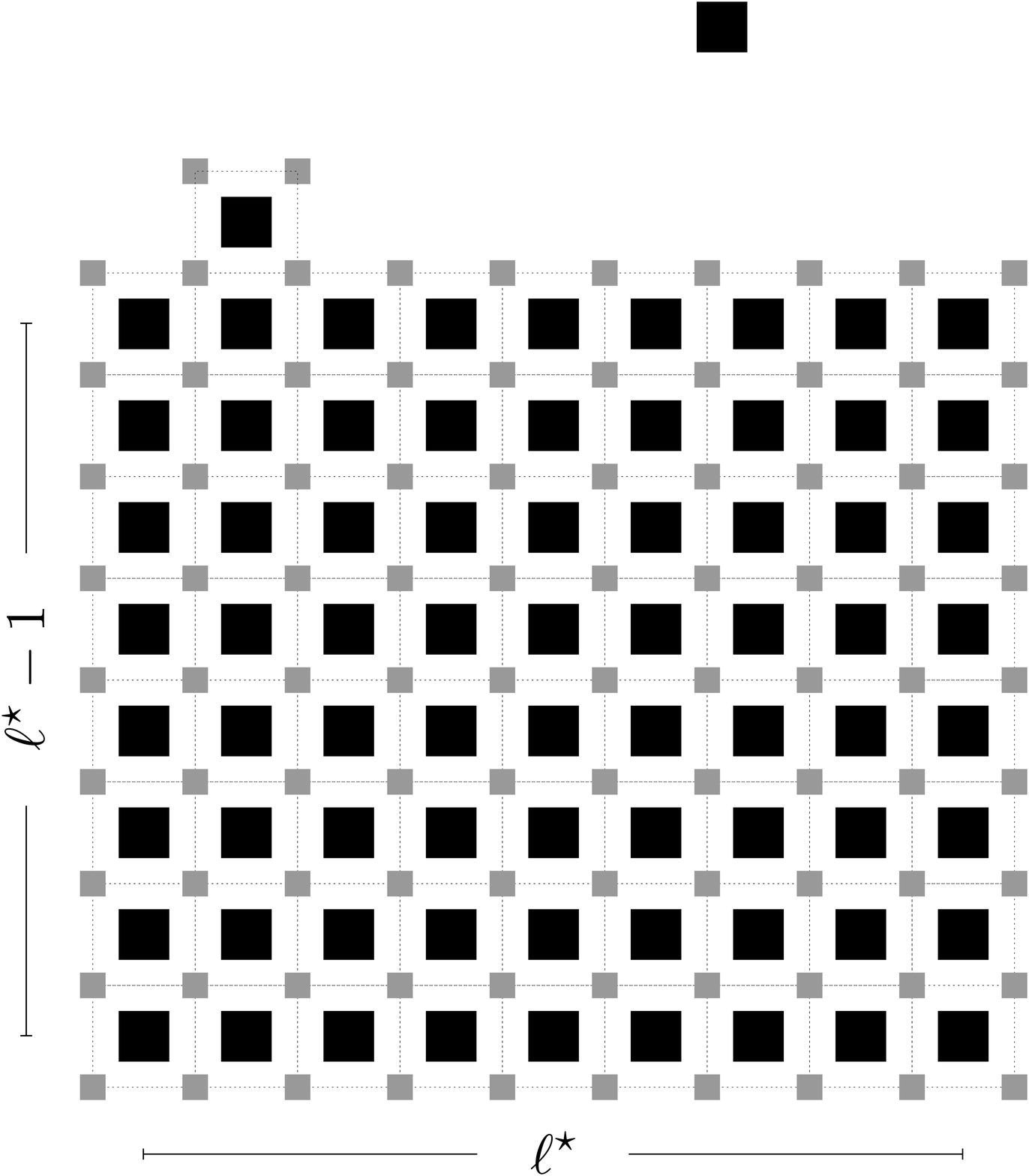}
\par\end{centering}
\caption{A critical configuration $\eta\starred$. This is the dual version of the
critical droplet in Fig.~\ref{fig-critical_droplet}.}
\label{fig-critconf}
\end{figure}


\subsection{Proof of Lemma~\ref{lemma key}}
\label{sec proof of lemma key} 

The proof of Lemma~\ref{lemma key} is carried out in two steps. In Section~\ref{sec scmitt}
we show that the claim holds for single-cluster configurations with a fixed number of 
particles of type $\tb$. In Section~\ref{sec cmefnp} we extend the claim to general 
configurations with a fixed number of particles of type $\tb$. 


\subsubsection{Single clusters of minimal energy are $\btiled$ clusters}
\label{sec scmitt}

\begin{lemma}
\label{lemma optimality of btiled clusters} 
For any single-cluster configuration $\eta \in \nbset \backslash \molset$ there exists 
a configuration $\tilde{\eta} \in \molset$ such that $H(\tilde{\eta})< H(\eta)$.
\end{lemma}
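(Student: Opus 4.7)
My plan is to construct $\tilde{\eta}$ explicitly from $\eta$ by adding a particle of type $\ta$ at every empty site that is a lattice-neighbor of some particle of type $\tb$ in $\eta$, and then to verify that the result is a $\btiled$ configuration with the same number of particles of type $\tb$ and with strictly lower energy.

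The pivotal step, which I would establish first, is a \emph{parity} observation. From the definition of parity in item~1 of Section~\ref{sec coordinates}, combined with the fact recalled in item~\ref{def group clusters and parity} of Section~\ref{sec def} that all particles in the same cluster share the same parity, every particle of type $\tb$ in the single cluster $\eta$ must sit at a site of one fixed standard parity and every particle of type $\ta$ at the opposite standard parity. Since two lattice-adjacent sites always have opposite standard parity, \emph{no two particles of type $\tb$ in $\eta$ can be lattice-adjacent}. Hence every neighbor of a particle of type $\tb$ in $\eta$ is either occupied by a particle of type $\ta$ or empty, and the only obstruction to saturation is an empty neighbor. This parity observation is the main conceptual step; without it, putative $\tb$--$\tb$ adjacencies would have to be handled by moving or removing particles of type $\tb$, which is delicate because $\nb$ must be preserved.

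I would then define $\tilde{\eta}$ by setting $\tilde{\eta}(i)=1$ at every empty site $i$ that is lattice-adjacent to at least one particle of type $\tb$ of $\eta$, and $\tilde{\eta}(i)=\eta(i)$ otherwise. Since the particles of type $\tb$ of $\eta$ lie in $\Lambda^{--}$, the newly occupied sites lie in $\Lambda^{-}$, so the new bonds are counted by the Hamiltonian; moreover $\nb(\tilde{\eta})=\nb$ and every particle of type $\tb$ in $\tilde{\eta}$ is saturated, so $\tilde{\eta}\in\molset$. For the energy comparison, set $M=4\nb-B(\eta)\ge 1$ (the inequality uses $\eta\notin\molset$) and let $I$ be the set of newly occupied sites. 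By the parity observation, each missing bond at a particle of type $\tb$ corresponds to an empty neighbor, so $M=\sum_{i\in I}k_{\tb}(i)$, where $k_{\tb}(i)\ge 1$ counts the neighbors of $i$ occupied by a particle of type $\tb$ in $\eta$. Therefore $|I|\le M$, $B(\tilde{\eta})-B(\eta)=M$, and $\na(\tilde{\eta})-\na(\eta)=|I|$, which together yield
\[
H(\tilde{\eta})-H(\eta)=\Da\,|I|-U\,M\le (\Da-U)\,M<0
\]
by the assumption $\Da<U$ from \eqref{subpropmetreg}. The rest is bookkeeping driven by the inequality $\Da<U$, which makes each individual ``fill an empty neighbor of a particle of type $\tb$'' operation strictly energy-decreasing.
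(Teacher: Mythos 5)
Your proposal is correct and follows essentially the same route as the paper: the same construction of $\tilde{\eta}$ by filling every empty neighbor of a particle of type $\tb$ with a particle of type $\ta$, and the same energy comparison $\Da[\na(\tilde{\eta})-\na(\eta)]-U[B(\tilde{\eta})-B(\eta)]<0$ using $\na(\tilde{\eta})-\na(\eta)\leq B(\tilde{\eta})-B(\eta)$ and $\Da<U$. The parity argument you spell out is exactly what justifies the paper's (unproved) assertion that every neighbor of a particle of type $\tb$ in the cluster is either empty or occupied by a particle of type $\ta$.
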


\begin{proof}
Pick any $\eta \in \nbset \backslash \molset$. Every neighboring site of a particle 
of type $\tb$ in the cluster is either empty or occupied by a particle of type $\ta$,
and there is at least one non-saturated particle of type $\tb$. Since $\eta$ consists 
of a single cluster, $\tilde{\eta}$ can be constructed in the following way:
\begin{itemize}
\item 
$\tilde{\eta}(i) = \eta(i)$ for all $i \in \supp(\eta)$.
\item 
$\tilde{\eta}(j) = 1$ for all $j \notin \supp(\eta)$ such that there exists 
an $i \sim j$ with $\eta(i) = 2$.
\end{itemize}
Since
\begin{equation}
\label{eq comparison single cluster}
\begin{aligned}
H(\eta) &= \Da\na(\eta) + \Db\nb(\eta)  - UB(\eta),\\
H(\tilde{\eta}) &= \Da\na(\tilde{\eta}) + \Db\nb(\tilde{\eta}) - UB(\tilde{\eta}),
\end{aligned}
\end{equation}
and $\nb(\eta)=\nb(\tilde{\eta})$, we have
\begin{equation}
\label{Hdiff}
H(\tilde{\eta}) - H(\eta) 
=  \Da[\na(\tilde{\eta}) - \na(\eta)] - U[B(\tilde{\eta}) - B(\eta)].
\end{equation}
By construction, $B(\tilde{\eta}) - B(\eta) \geq \na(\tilde{\eta}) - \na(\eta) > 0$. 
Since $0 < \Da < U$ (recall \eqref{subpropmetreg}), it follows from \eqref{Hdiff} that 
$H(\tilde{\eta}) < H(\eta)$.
\end{proof}


\subsubsection{Configurations of minimal energy with fixed number of particles of type $\tb$}
\label{sec cmefnp}

\begin{lemma}
\label{lemma optimality fixed number of particles of type tb}
For any $\nb$ and any configuration $\eta \in \nbset$ consisting of at least two 
clusters, any configuration $\eta\starred$ such that $\eta\starred$ is a single 
cluster, $\eta\starred \in \molset$ and $\eta\starred$ is a standard configuration
satisfies $H(\eta\starred) < H(\eta)$.
\end{lemma}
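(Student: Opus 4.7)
The plan is a two-step reduction: first convert $\eta$ into a $\btiled$ configuration with the same $\nb$ and no greater energy, then use the cluster-wise additivity of $\cT$ to show that any multi-cluster $\btiled$ configuration strictly exceeds $H(\eta\starred)$.

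For the first step I would apply the construction of Lemma~\ref{lemma optimality of btiled clusters} cluster by cluster to $\eta$: for each unsaturated particle of type $\tb$, fill every empty neighboring site with a particle of type $\ta$. Since particles of type $\ta$ form no active bonds with each other, this preserves $\nb$ and never raises the energy (it may merge two nearby clusters, but this only helps because of the argument in Lemma~\ref{lemma optimality of btiled clusters}). The outcome is $\tilde\eta \in \molset$ with $\nb(\tilde\eta) = \nb$, $H(\tilde\eta) \leq H(\eta)$, and strict inequality whenever $\eta \notin \molset$.

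For the second step I use Lemma~\ref{lemma number of bonds in a molecule cluster} and $B(\tilde\eta) = 4\nb$ to write
\begin{equation*}
H(\tilde\eta) = -\epsi\,\nb + \tfrac{1}{4}\Da\,\cT(\tilde\eta),
\end{equation*}
so minimising $H$ over $\molset$ at fixed $\nb$ amounts to minimising $\cT$. Summing the cluster-wise inequality $\cT(c) \geq \cT_{\min}(\nb(c))$ of Lemma~\ref{lemma alpha optimality}(i) (with $\cT_{\min}(m) := 2P^*(m)+4$ and $P^*(m) = 2\lceil 2\sqrt m \rceil$ the Alonso--Cerf minimal perimeter), I obtain $\cT(\tilde\eta) \geq \sum_c \cT_{\min}(\nb(c))$. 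If $\tilde\eta$ has $k \geq 2$ clusters of sizes $m_1,\ldots,m_k$, then
\begin{equation*}
\sum_{i=1}^k \cT_{\min}(m_i) = 2\sum_{i=1}^k P^*(m_i) + 4k > 2P^*(\nb) + 4 = \cT_{\min}(\nb),
\end{equation*}
where $\sum_i P^*(m_i) \geq P^*(\nb)$ follows from $\sqrt{m_1}+\sqrt{m_2}\geq \sqrt{m_1+m_2}$ combined with the Alonso--Cerf formula, while the strict inequality is supplied by the $4(k-1)\geq 4$ slack coming from the per-cluster $+4$ constants.

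Chaining the estimates gives
\begin{equation*}
H(\eta) \geq H(\tilde\eta) \geq -\epsi\,\nb + \tfrac{1}{4}\Da \sum_c \cT_{\min}(\nb(c)) > -\epsi\,\nb + \tfrac{1}{4}\Da\,\cT_{\min}(\nb) = H(\eta\starred),
\end{equation*}
where the final strict inequality is supplied by the saturation step when $\eta \notin \molset$, and by the sub-additivity inequality otherwise (since then $\tilde\eta = \eta$ still has $k \geq 2$ clusters). The main obstacle is the sub-additivity $\sum_i P^*(m_i) \geq P^*(\nb)$ of Alonso--Cerf's minimal perimeter: the ceilings in the formula could in principle obstruct this bound, but the $+4$ slack per extra cluster provides a robust buffer that makes the inequality strict regardless.
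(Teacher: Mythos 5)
Your proof is correct and follows essentially the same route as the paper: saturate the type-$\tb$ particles to pass to a $\btiled$ configuration (Lemma~\ref{lemma optimality of btiled clusters}), rewrite the energy as $-\epsi\,\nb+\tfrac14\Da\,\cT$ via Lemma~\ref{lemma number of bonds in a molecule cluster}, and conclude from the strict superadditivity of the minimal $\cT$ over $k\ge 2$ clusters. The only real difference is that the paper first replaces each cluster by a standard configuration and cites Lemma~\ref{lemma alpha optimality} for the strict inequality without computation, whereas you make that step explicit through the Alonso--Cerf minimal-perimeter formula and the $+4$ per-cluster constants.
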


\begin{proof}
Let $\eta \in \nbis{\nb}$ be a configuration consisting of $k > 1$ clusters, labeled 
$c_1,\ldots,c_k$. Let $\eta^{\nb(c_{i})}$ denote any standard configuration with 
$\nb(c_{i})$ particles of type $\tb$. By Lemmas~\ref{lemma optimal $2$-tiled 
configurations} and \ref{lemma optimality of btiled clusters}, we have
\begin{equation}
\label{est1a}
H(\eta) = \sum_{i=1}^{k} H(c_{i}) \geq \sum_{i = 1}^{k} H(\eta^{\nb(c_{i})}).
\end{equation}
By Lemma~\ref{lemma number of bonds in a molecule cluster}, we have
(recall (\ref{epsdef}))
\begin{equation}
\label{est1b}
\begin{aligned}
\sum_{i=1}^{k} H(\eta^{\nb(c_{i})})	
& =	\sum_{i=1}^{k} \big[\Da\na(\eta^{\nb(c_{i})}) + \Db\nb(\eta^{\nb(c_{i})}) 
-UB(\eta^{\nb(c_{i})})\big]\\
& = \sum_{i=1}^{k} \big[
\Da \big\{ \nb(\eta^{\nb(c_{i})}) + \tfrac14\cT(\eta^{\nb(c_{i})}) \big\} 
+ \Db\nb(\eta^{\nb(c_{i})}) - U 4\nb(\eta^{\nb(c_{i})})\big]\\
& =	\sum_{i = 1}^{k} \big[- \epsi \nb (\eta^{\nb(c_{i})})
+ \tfrac14 \Da\cT(\eta^{\nb(c_{i})})\big].
\end{aligned}
\end{equation}
But from Lemma~\ref{lemma alpha optimality} it follows that
\begin{equation}
\label{est1c}
\sum_{i=1}^{k} \cT(\eta^{\nb(c_{i})})  > \cT\big(\eta^{ \sum_{i=1}^{k} \nb(c_{i}) }\big),
\end{equation}
where $\eta^{\sum_{i=1}^{k} \nb(c_{i})}$ denotes any standard configuration with $\sum_{i=1}^{k}
\nb(c_{i}) = \nb(\eta)$ particles of type $\tb$. Combining (\ref{est1a}--\ref{est1c}), we 
arrive at
\begin{equation}
H(\eta) > - \epsi\nb(\eta)+ \tfrac14\Da\cT(\eta^{\nb(\eta)}) = H(\eta^{\nb(\eta)}).
\end{equation}
\end{proof}


\section{Proof of Theorem~\ref{theorem recurrence}: upper bound on $\stablev{\eta}$ for
$\eta\notin\{\Box,\boxplus\}$}
\label{sec Recurrence}

In this section we show that for any configuration $\eta\notin\{\Box,\boxplus\}$ 
it is possible to find a path $\omega\colon\,\eta\to\eta\prm$ with $\eta\prm\in
\{\Box, \boxplus\}$ such that $\max_{\xi\in\omega} H(\xi) \leq H(\eta)+V\starred$ 
with $V\starred \le10U-\Da$ and $\eta\prm\in I_\eta$. By Definition~\ref{def1}(c--e), 
this implies that $\stablev{\eta}\leq V\starred$ for all $\eta\notin\{\Box,\boxplus\}$ 
and therefore settles Theorem~\ref{theorem recurrence}.

Section~\ref{sec general reduction} describes an \emph{energy reduction algorithm} 
to find $\omega$. Roughly, the idea is that if $\eta$ contains only ``subcritical 
clusters'', then these clusters can be removed one by one to reach $\Box$, while if 
$\eta$ contains some ``supercritical cluster'', then this cluster can be taken as 
a stepping stone to construct a path to $\boxplus$ that goes via a sequence of 
increasing rectangles. In particular, the supercritical cluster is first extended 
to a $\btiled$ rectangle touching the north-boundary of $\Lambda$, after that it 
is extended to a $\btiled$ rectangle touching the west-boundary and the east-boundary 
of $\Lambda$, and finally it is extended to $\boxplus$.

To carry out this task, six \emph{energy reduction mechanisms} are needed, which 
are introduced and explained in Section~\ref{sec enegredmech}: 
\begin{itemize}
\item
Moving unit holes inside $\btiled$ clusters
(Section~\ref{sec motion of particles inside clusters}).
\item 
Adding and removing $\abbars$ from lattice-connecting rectangles
(Section~\ref{sec growth of clusters}).
\item
Changing bridges into $\abbars$
(Section~\ref{sec main procedures}).
\item 
Maximally expanding $\btiled$ rectangles
(Section~\ref{maximal expansion btiled rectangle}).
\item 
Merging adjacent $\btiled$ rectangles
(Section~\ref{sec sliding adjacent btiled rectangles}).
\item 
Removing subcritical clusters
(Section~\ref{sec preparation procedure}). 
\end{itemize}
Each of Sections~\ref{sec motion of particles inside clusters}--\ref{sec preparation procedure} 
states a definition and a lemma, and uses these to prove a proposition about the relevant 
energy reduction mechanism. The six propositions thus obtained will be crucial for the energy
reduction algorithm in Section~\ref{sec general reduction}.
  
In Section~\ref{sec beams and pillars} we begin by defining beams and pillars,
which are needed throughout Section~\ref{sec enegredmech}.


\subsection{Beams and pillars}
\label{sec beams and pillars}

\begin{lemma}
\label{lemma btiles are optimal tiles}
Let $\eta$ be a configuration containing a tile $t$ that has at least three junction sites 
occupied by a particle of type $\ta$. Then the configuration $\eta\prm$ obtained from $\eta$
by turning $t$ into a $\btile$ satisfies $H (\eta\prm) \leq H(\eta)$.
\end{lemma}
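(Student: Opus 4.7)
The plan is a case analysis on the occupation of the only two sites of the tile whose value can change: the central site $c$ and the distinguished fourth junction site $j_4$ (the other three junctions $j_1, j_2, j_3$ are of type $\ta$ by hypothesis and remain so after the transformation). Writing $c_0 = \eta(c)$ and $j_0 = \eta(j_4)$, there are nine cases in $\{0,1,2\}^2$. In each, I would compute $\Delta n_\ta$, $\Delta n_\tb$ and $\Delta B$ by inspection and conclude via
\[
\Delta H := H(\eta\prm) - H(\eta) = \Da\, \Delta n_\ta + \Db\, \Delta n_\tb - U\, \Delta B.
\]

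The bond change splits as $\Delta B = \Delta B_{\text{int}} + \Delta B_{\text{ext}}$, where $\Delta B_{\text{int}}$ accounts for the four bonds from $c$ to the junctions and $\Delta B_{\text{ext}}$ for the bonds from $j_4$ to its three external neighbors; bonds at $j_1, j_2, j_3$ do not change because these sites keep the label $\ta$. After the transformation the interior part equals $4$ (since $c$ is $\tb$ and all four junctions are $\ta$), while its initial value is easily tabulated. For $\Delta B_{\text{ext}}$, let $a$ and $b$ denote the numbers of external $\ta$- and $\tb$-neighbors of $j_4$; then $\Delta B_{\text{ext}} = +b$ if $j_4$ passes from empty to $\ta$, $\Delta B_{\text{ext}} = b - a$ if it passes from $\tb$ to $\ta$, and $\Delta B_{\text{ext}} = 0$ if $j_0 = 1$.

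Running through the nine cases, each of the three inequalities of \eqref{subpropmetreg} enters. The four cases $(c_0, j_0) \in \{0,1\}^2$ rely on $\Da + \Db < 4U$ (equivalently $\Db < 4U - \Da$), the cases $(c_0, j_0) \in \{(0,2),(2,0)\}$ rely on $\Da < U$, and the delicate case $(c_0, j_0) = (2,2)$ relies on $\Db - \Da > 2U$: here $j_4$ is demoted from $\tb$ to $\ta$, giving $\Delta n_\ta = +1$, $\Delta n_\tb = -1$, $\Delta B_{\text{int}} = +1$ and $\Delta B_{\text{ext}} \ge -3$, hence $\Delta H \le \Da - \Db + 2U \le 0$. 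This last case is the main obstacle: it is the only sub-case in which a $\tb$ particle is actually removed and in which $\Delta B$ can be negative; it also explains why the restriction $\Db - \Da > 2U$ of \eqref{subpropmetreg} is indispensable for this lemma.
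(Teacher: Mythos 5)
Your proposal is correct and follows essentially the same route as the paper: a case analysis on the occupation numbers of the central site and the one junction site not guaranteed to hold a particle of type $\ta$, with $\Delta H$ controlled by counting particles and bonds and invoking the inequalities of \eqref{subpropmetreg} exactly where the paper does (in particular $\Db-\Da>2U$ for the case where both variable sites carry a particle of type $\tb$). The only cosmetic difference is that you compute $\Delta H$ via a uniform interior/exterior bond bookkeeping, whereas the paper phrases each case as a sequence of particle additions, moves and replacements.
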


\begin{figure}[htbp]
\renewcommand{\thesubfigure}{(\roman{subfigure})}
\centering
\subfigure[]
{\includegraphics[width=0.055\textwidth]{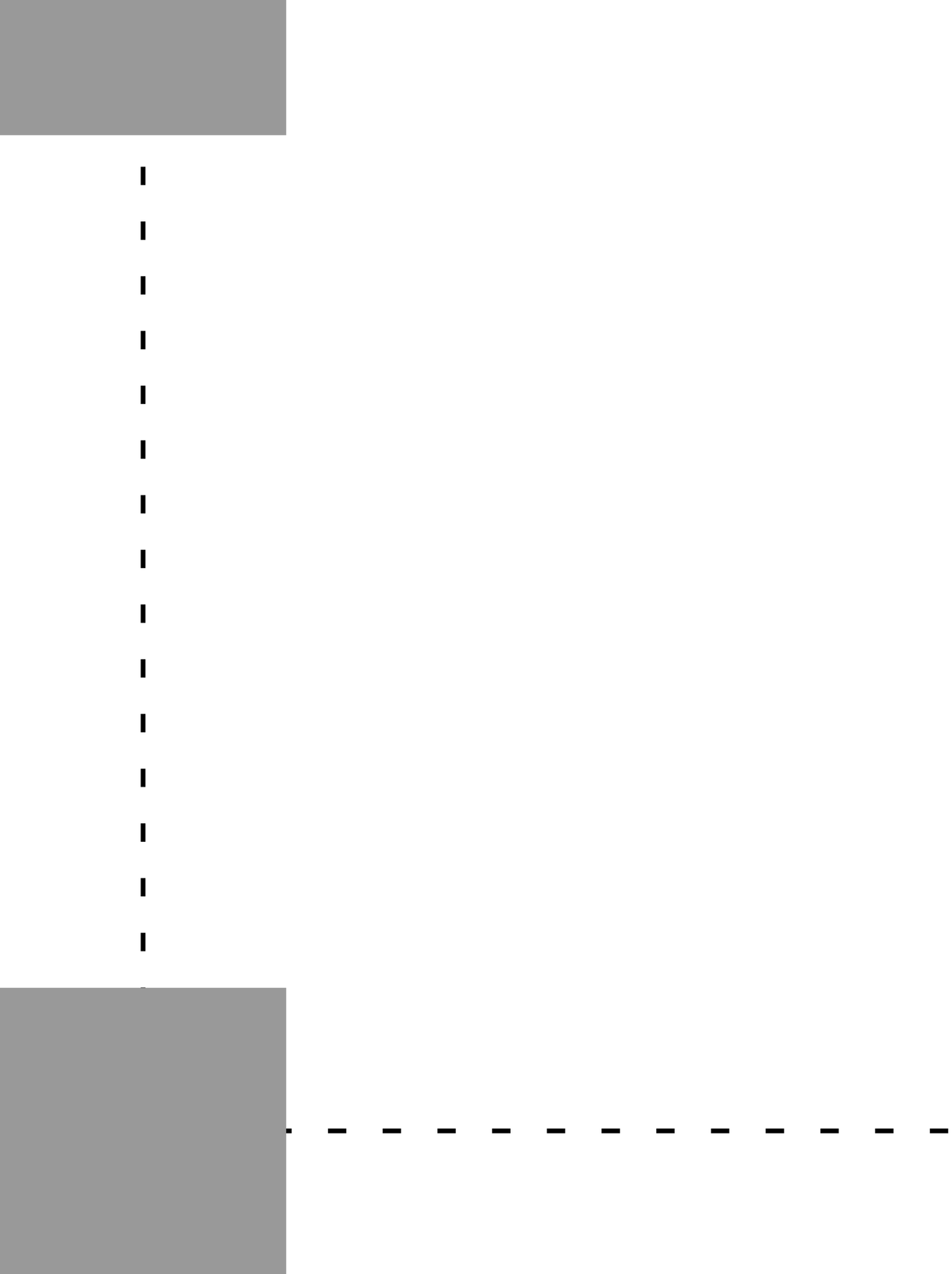}}
\quad
\subfigure[]
{\includegraphics[width=0.055\textwidth]{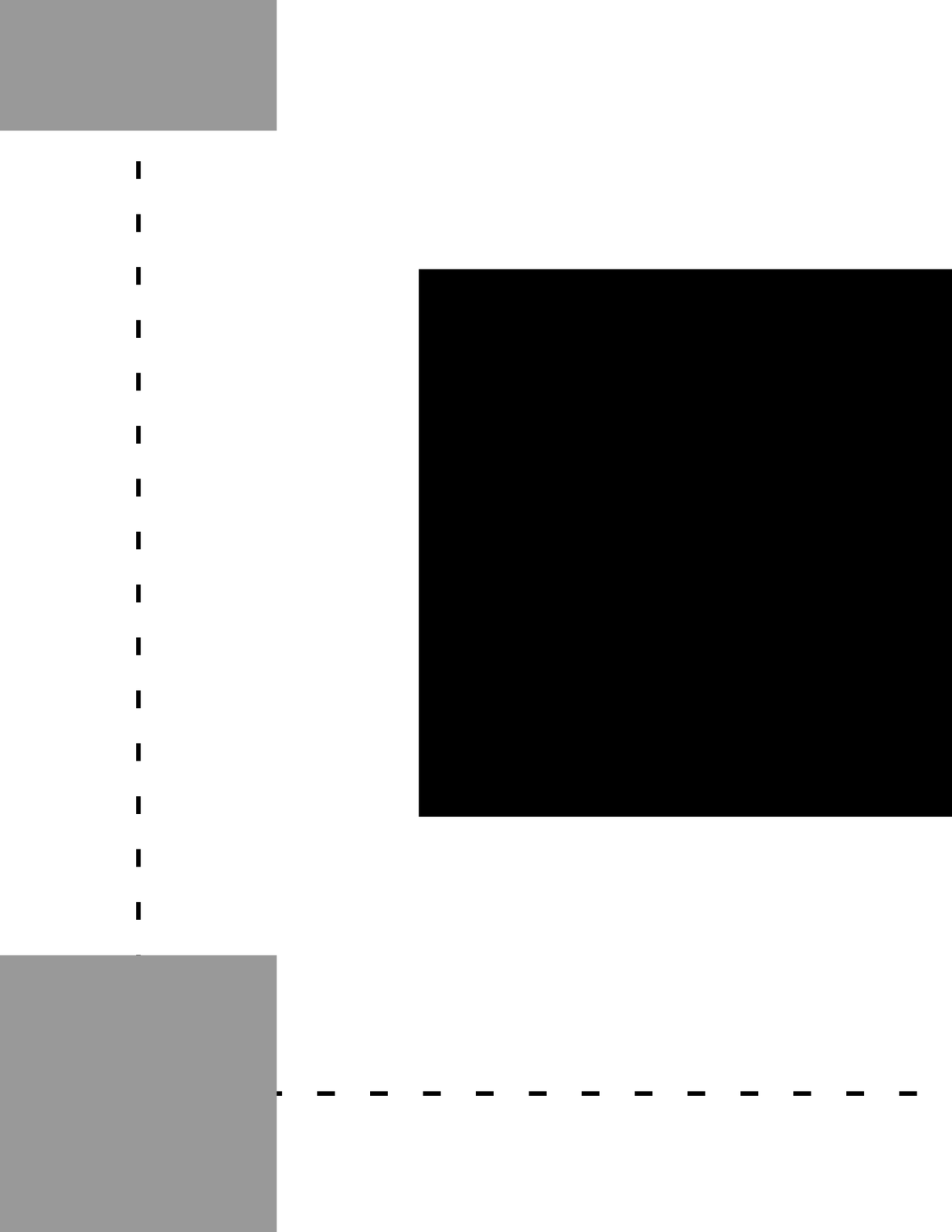}}
\quad
\subfigure[]
{\includegraphics[width=0.055\textwidth]{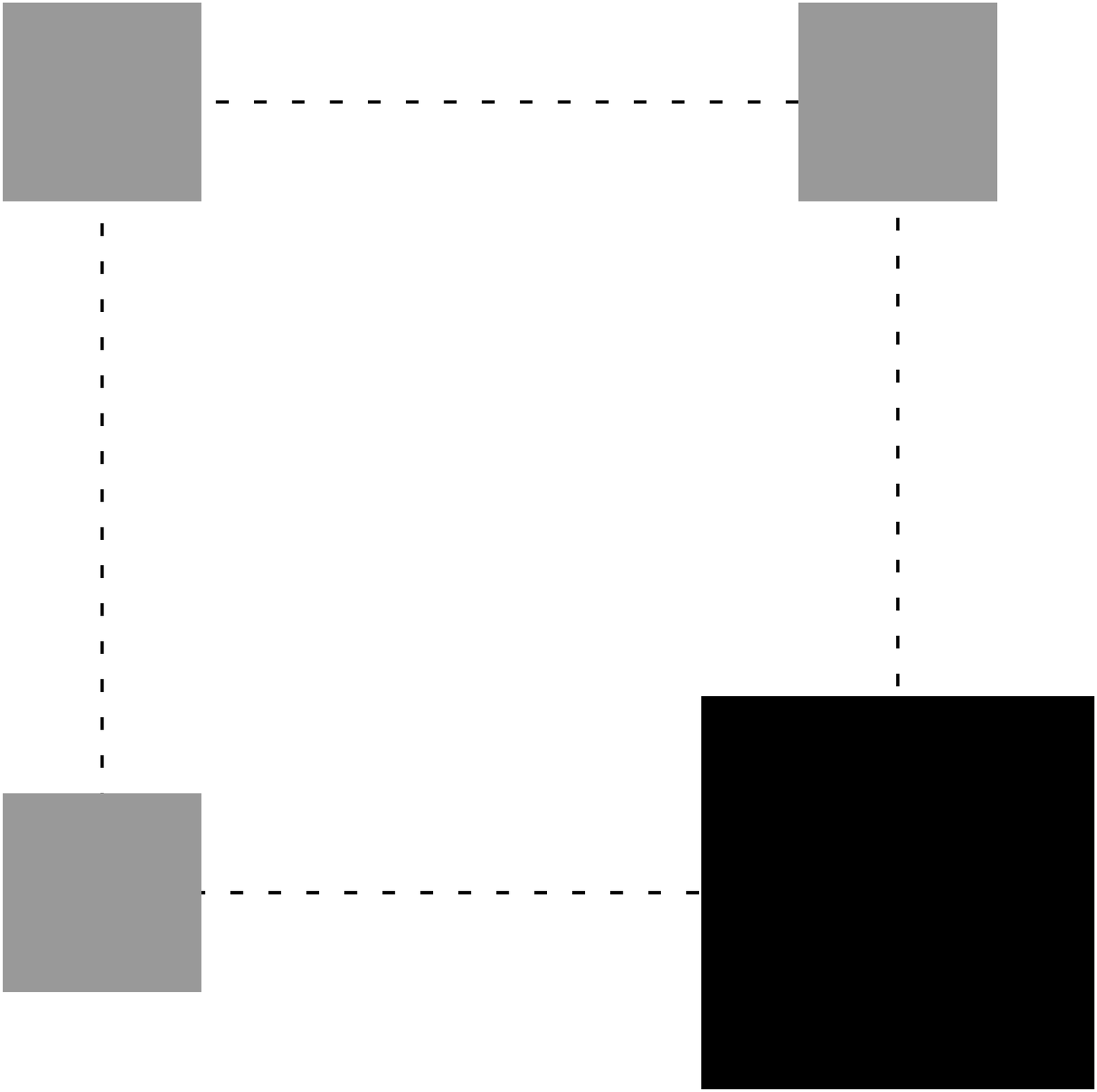}}
\quad
\subfigure[]
{\includegraphics[width=0.055\textwidth]{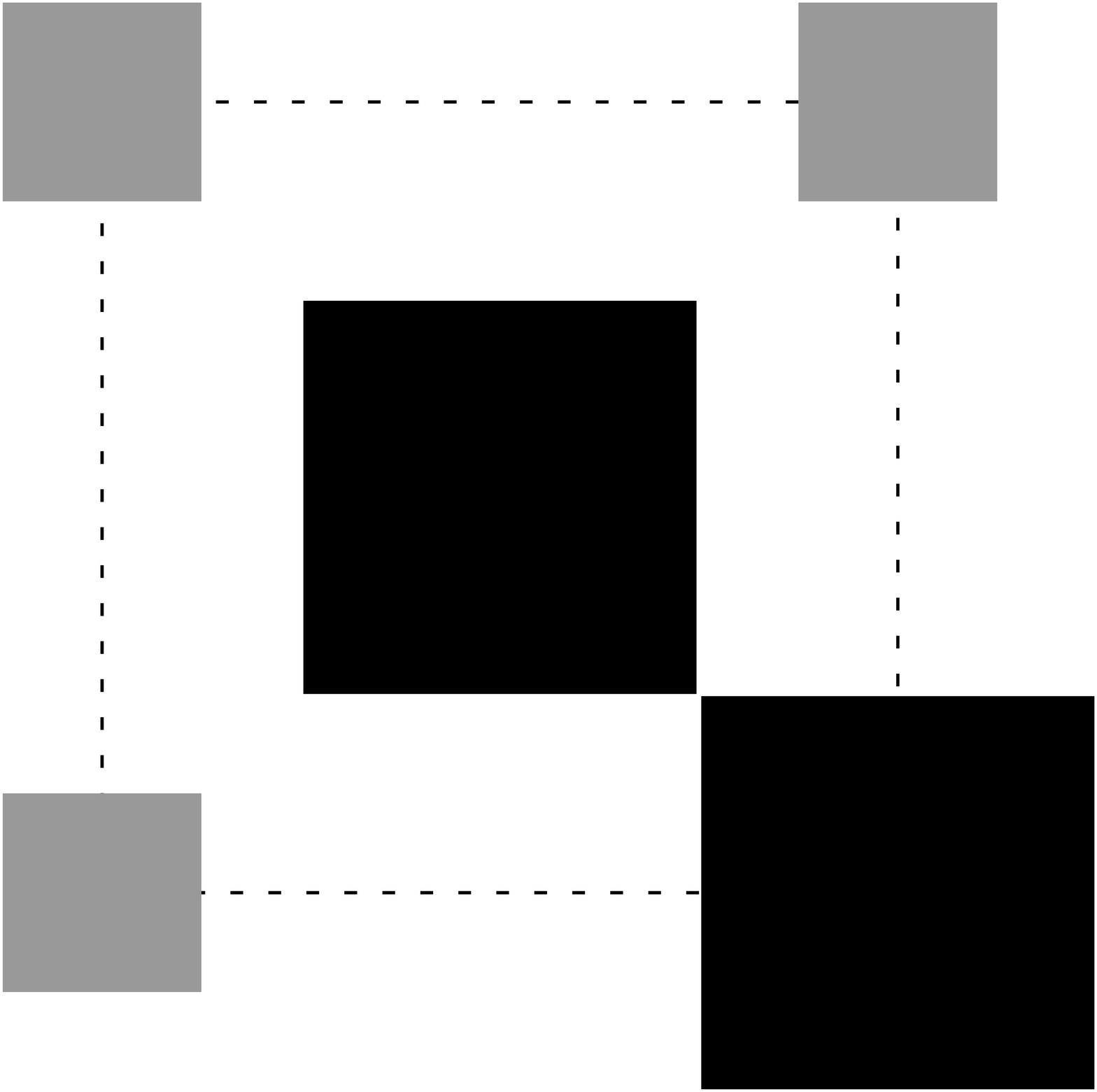}}
\quad
\subfigure[]
{\includegraphics[width=0.055\textwidth]{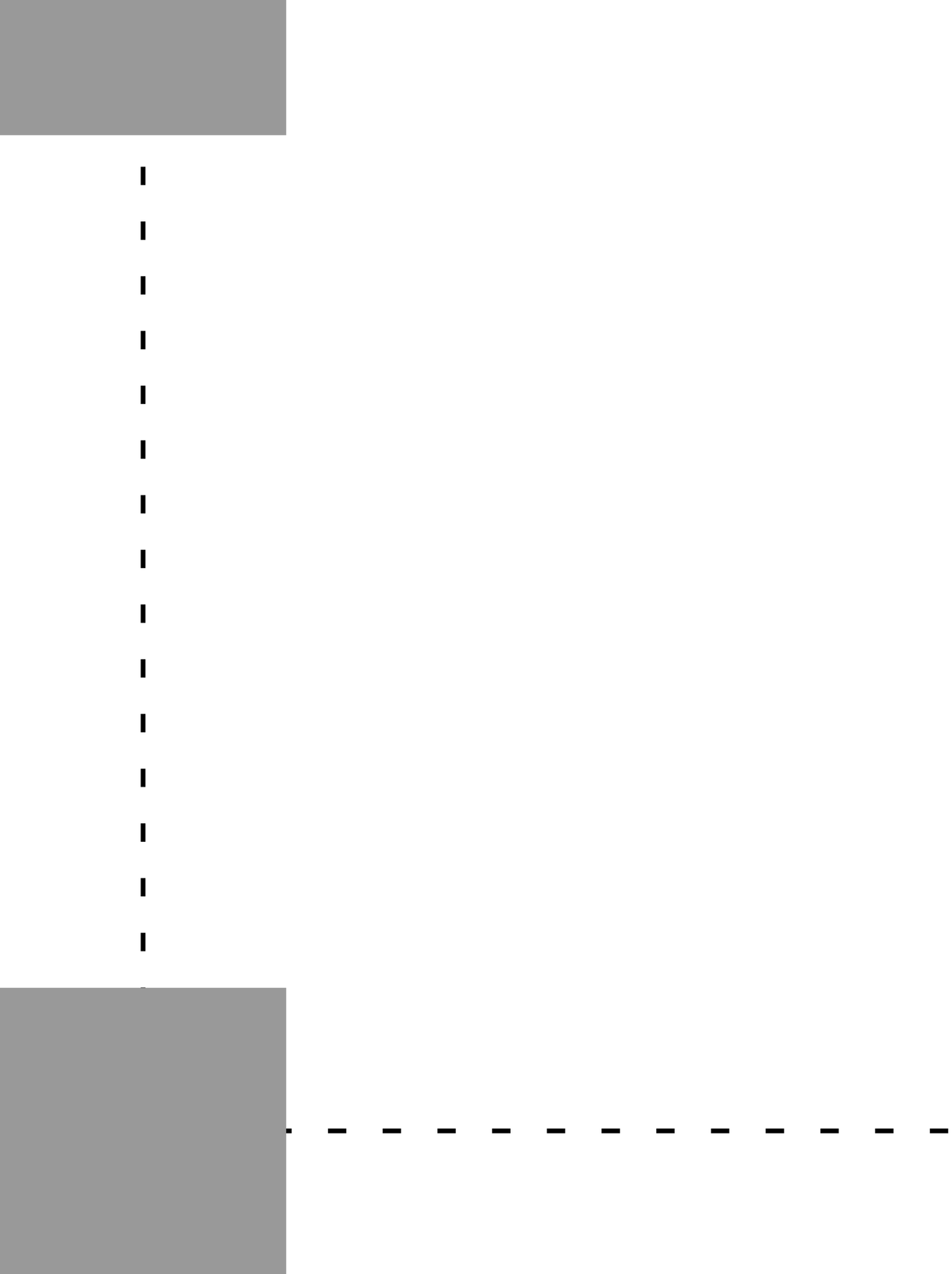}}
\quad
\subfigure[]
{\includegraphics[width=0.055\textwidth]{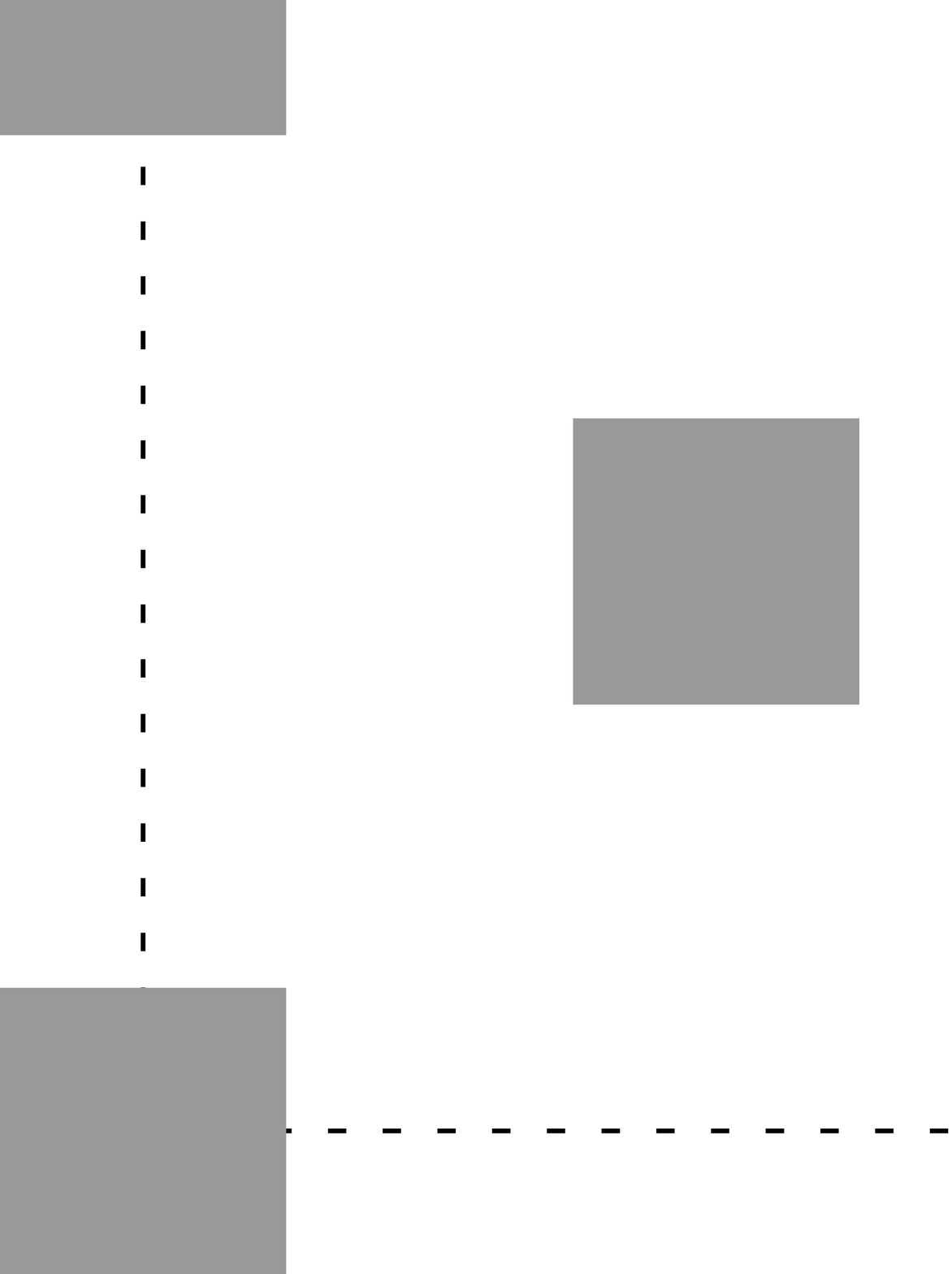}}
\quad
\subfigure[]
{\includegraphics[width=0.055\textwidth]{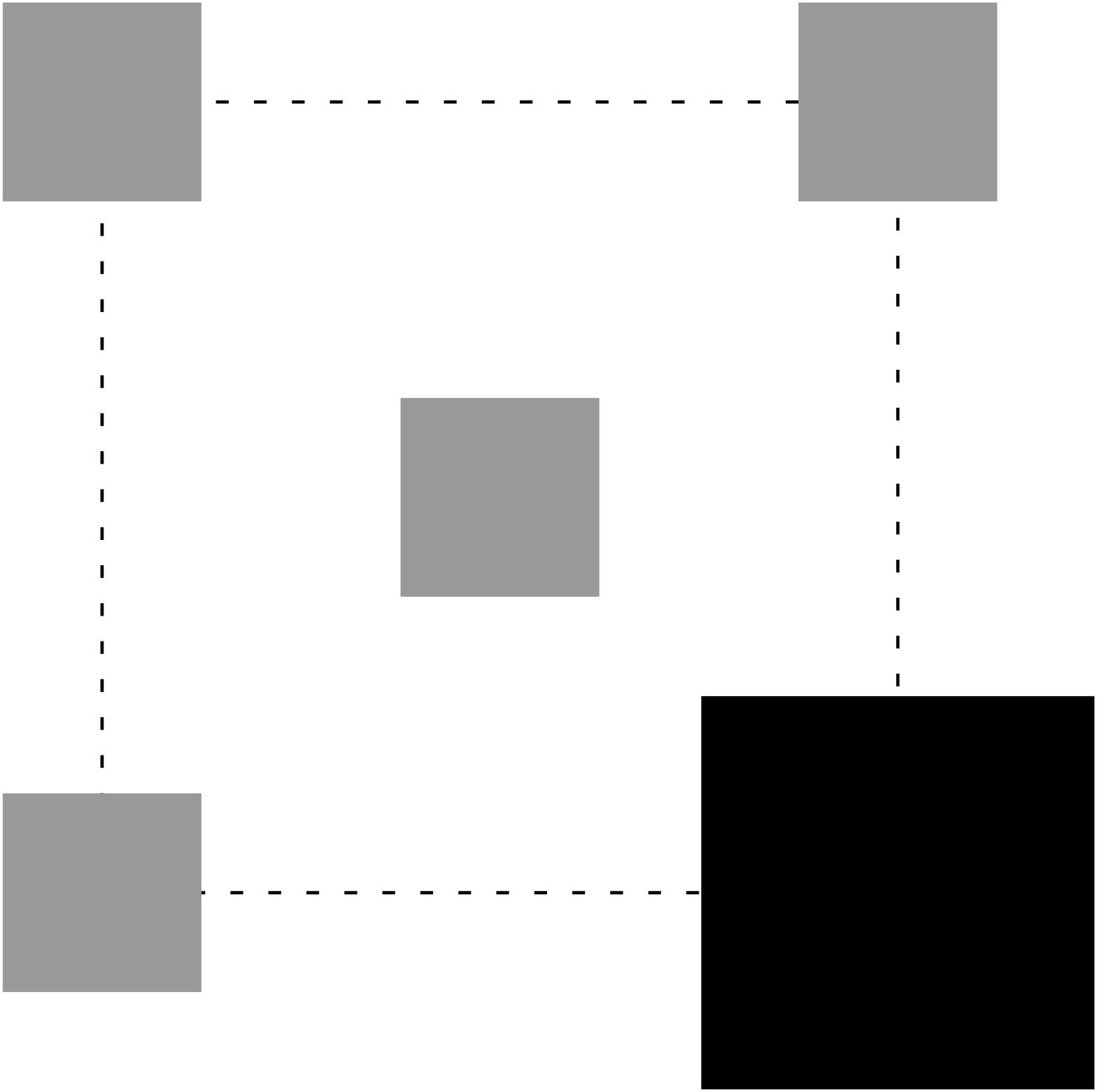}}
\quad
\subfigure[]
{\includegraphics[width=0.05\textwidth]{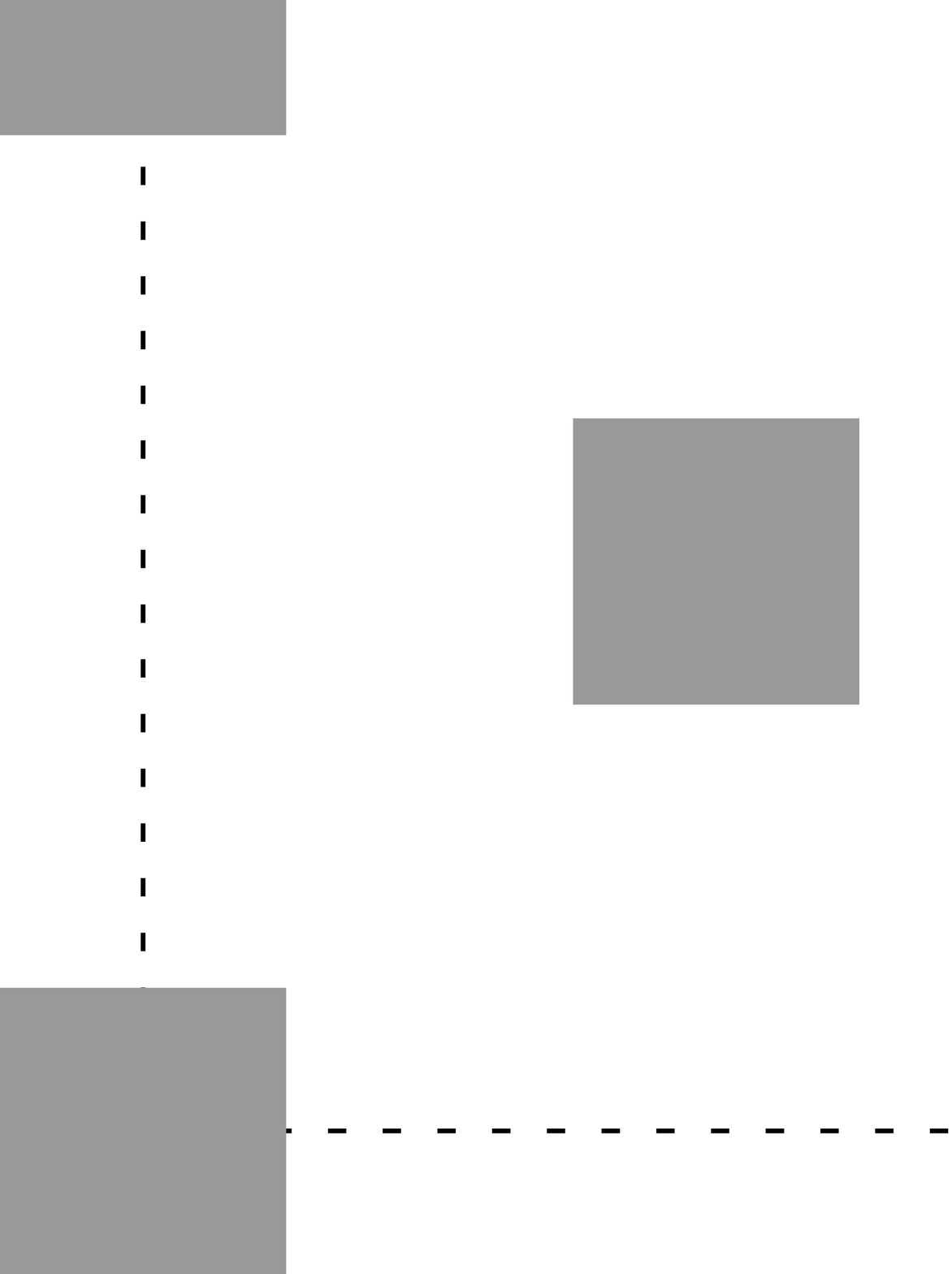}}
\caption{Possible tiles with at least three junction sites occupied by 
a particle of type $\ta$.}
\label{fig-possible_quasitiles}
\renewcommand{\thesubfigure}{(\alpha{subfigure})}
\end{figure}

\begin{proof}
W.l.o.g.\ we may assume that $\eta(t_a)=\eta(t_b)=\eta(t_d)=1$, and that $\eta'$ is
the configuration in Fig.~\ref{fig-2tile}(d), i.e., $\eta'(t_a)=\eta'(t_b)=\eta'(t_c)
=\eta'(t_d)=1$, $\eta'(t_e)=2$. The following eight cases are possible 
(see Fig.~\ref{fig-possible_quasitiles} and recall \eqref{subpropmetreg}):

\begin{enumerate}[(i)]
\item 	
$(\eta(t_{c}),\eta(t_{e})) = (0,0)$.
One particle of type $\ta$ and one particle of type $\tb$ are added, and at least four new 
bonds are activated: $\D H \le \Da + \Db - 4U < 0$.
\item
$(\eta(t_{c}),\eta(t_{e})) = (0,2)$.
One particle of type $\ta$ is added, and one new bond is activated: $\D H = \Da - U < 0$.
\label{ENUM ADDPARTICLEOFTYPE1}
\item
$(\eta(t_{c}),\eta(t_{e})) = (2,0)$.
One particle of type $\tb$ is moved to another site without deactivating any bonds, 
after which case~(\ref{ENUM ADDPARTICLEOFTYPE1}) applies.
\item
$(\eta(t_{c}),\eta(t_{e})) = (2,2)$.
One particle of type $\tb$ with at most three active bonds is replaced by one particle of 
type $\ta$ with at least one active bond: $\D H \leq \Da - \Db + 2U < 0$.		
\item
$(\eta(t_{c}),\eta(t_{e})) = (1,0)$.
One particle of type $\tb$ is added, and four new bonds are activated: $\D H = \Db - 4U < 0$. 
\label{ENUM ADDPARTICLEOFTYPE2}
\item	
$(\eta(t_{c}),\eta(t_{e})) = (0,1)$.
One particle of type $\ta$ is moved to another site without deactivating any active bond, 
one particle of type $\tb$ is added, and at least four new bonds are activated: 
$\D H \le \Db - 4U < 0$.
\item
$(\eta(t_{c}),\eta(t_{e})) = (2,1)$.
Two particles are exchanged without deactivating any bonds: $\D H\leq 0$.
\item 
$(\eta(t_{c}),\eta(t_{e})) = (1,1)$.	
One particle of type $\ta$ is replaced by a particle of type $\tb$, and four new 
bonds are activated: $\D H = \Db -\Da - 4U < 0$. 
\label{ENUM CHANGE1INTO2}
\end{enumerate}
\end{proof}

\begin{definition}
\label{definition of beam}
A beam of length $\ell$ is a row (or column) of $\ell+1$ particles of type 
$\ta$ at dual distance $1$ of each other. A pillar is a particle of type 
$\ta$ at dual distance $1$ of the beam not located at one of the two ends
of the beam. The particle in the beam sitting next to the pillar divides the 
beam into two sections. The lengths of these two sections are $\geq 0$ and 
sum up to $\ell$. The support of a pillared beam is the union of all the 
tile supports. The support consists of three rows (or columns) of sites -- 
an upper, middle and lower row (or column) -- which are referred to as roof, 
center and basement
({\rm see Fig.~\ref{fig:beam-pillar}}).
\end{definition}

\begin{figure}[htbp]
\centering
\includegraphics[height=0.07\textwidth]{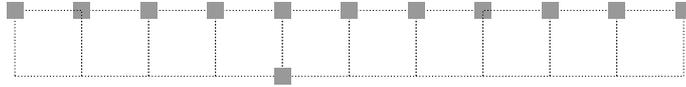}
\caption{A south-pillared horizontal beam of length $10$ with a west-section 
of length 4 and an east-section of length 6.}
\label{fig:beam-pillar}
\end{figure}

Note that a beam can have more than one pillar. Lemma~\ref{lemma btiles are optimal tiles} 
implies the following.

\begin{corollary}
\label{corollary optimality of btiled pillared beams}
Let $\eta$ be a configuration containing a pillared beam $\tilde{b}$ such that
$\supp(\tilde{b})$ is not $\btiled$.
Then the configuration $\eta\prm$ obtained 
from $\eta$ by $\btiling$ $\supp(\tilde{b})$ satisfies $H (\eta\prm) \leq H (\eta)$.
\end{corollary}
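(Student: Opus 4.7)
The plan is to prove the corollary by iteratively applying Lemma~\ref{lemma btiles are optimal tiles} to tiles in $\supp(\tilde{b})$. At each step, the chosen tile will have at least three junction sites occupied by a particle of type $\ta$, so Lemma~\ref{lemma btiles are optimal tiles} applies and the energy does not increase. After finitely many applications $\supp(\tilde{b})$ becomes part of a $\btiled$ cluster, and summing the non-positive energy changes at each step yields $H(\eta\prm)\leq H(\eta)$.

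The key geometric input is that the beam provides a contiguous row of particles of type $\ta$, so any tile whose center sits between two consecutive beam particles automatically has two of its junction sites occupied by type $\ta$. If such a tile is additionally adjacent to a pillar of $\tilde{b}$, then a third junction site is also of type $\ta$, and Lemma~\ref{lemma btiles are optimal tiles} applies immediately. I would therefore begin the iteration at a tile adjacent to some pillar. Converting that tile into a $\btile$ places a new particle of type $\ta$ at its fourth junction; this new particle plays the role of an auxiliary ``pillar'' for the next tile one step further along the beam, so Lemma~\ref{lemma btiles are optimal tiles} applies to that tile as well. Iterating the propagation outward from each pillar, in both directions along the beam, eventually processes every tile of $\supp(\tilde{b})$.

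The main obstacle is the combinatorial bookkeeping needed to verify two things: (i) the propagation reaches every tile required for $\supp(\tilde{b})$ to become $\btiled$; and (ii) the processing order keeps the three-junction hypothesis of Lemma~\ref{lemma btiles are optimal tiles} valid at each step (in particular, once a tile is converted, the new type $\ta$ junctions it creates furnish exactly the hypothesis needed for the next tile). Both are straightforward consequences of the contiguous structure of the beam together with the location of the pillars, but they constitute the only non-routine aspect of the argument. Once the propagation has been carried out, telescoping the non-positive energy changes supplied by Lemma~\ref{lemma btiles are optimal tiles} immediately gives the claimed inequality $H(\eta\prm)\leq H(\eta)$.
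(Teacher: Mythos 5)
Your proposal is correct and is essentially the argument the paper intends: the paper gives no explicit proof, stating only that Lemma~\ref{lemma btiles are optimal tiles} ``implies'' the corollary, and the intended implication is exactly your iteration --- start at a tile flanked by two beam particles and the pillar, convert it, and let the newly placed particle of type $\ta$ at the fourth junction site serve as the pillar for the adjacent tile, propagating outward along both sections until the support is $\btiled$, with the energy inequality following by telescoping the non-positive increments.
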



\subsection{Six energy reduction mechanisms}
\label{sec enegredmech}


\subsubsection{Moving unit holes inside $\btiled$ clusters}
\label{sec motion of particles inside clusters}

In this section we show how a unit hole can move inside a $\btiled$ cluster. 
In particular, we show that such motion is possible within an energy barrier $6U$
by changing the configuration only locally.

\begin{definition}
A set of sites $S$ inside $\Lambda$ obtained from a $4 \times 4$ square after 
removing the four corner sites is called a slot.
\end{definition}

\noindent
Given a slot $S$, we assign a label to each of the $12$ sites in $S$ as in
Fig.~\ref{fig:slot lemma} (a): first clockwise in the center of $S$ and then
clockwise on the boundary of $S$. We call the pairs $(S_{1},S_{3})$ and 
$(S_{2},S_{4})$ slot-conjugate sites.

\begin{lemma}
\label{lemma on motion of particles inside a cluster}
Let $S$ be a slot, and let $\eta_{0}$ be any configuration such that all particles 
in $S$ have the same parity. W.l.o.g.\ this parity may be taken to be even, so that 
$\eta(S_{1})=0$ and $\eta(S_{3})=\tb$. Let $\eta_{1}$ be the configuration obtained 
from $\eta$ by interchanging the states of $S_{1}$ and $S_{3}$. Then $H(\eta_{0})
= H(\eta_{1})$, and there exists a path $\omega\colon\,\eta_{0}\to\eta_{1}$ that 
never exceeds the energy level $H(\eta_{0})+ 6U$.
\end{lemma}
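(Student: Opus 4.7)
The plan is to exhibit an explicit sequence of six allowed hops, all supported inside the slot $S$, that realises the swap of the contents of $S_1$ and $S_3$, and then to verify both $H(\eta_0)=H(\eta_1)$ and the $6U$ barrier by direct bond-counting.

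\emph{Energy equality.} Under the parity hypothesis, the four even-parity sites of the inner $2\times 2$ block of $S$ carry only type-$\tb$ particles or holes, and the four odd-parity sites only type-$\ta$ particles or holes. Each of $S_1$ and $S_3$ has $S_2,S_4$ among its neighbours together with two further odd-parity slot-boundary sites. The parity hypothesis forces those boundary sites to be either empty or of type-$\ta$, and the symmetry of the slot around its centre makes the occupation pattern seen from $S_1$ identical to the one seen from $S_3$. Hence the two sites have the same number of type-$\ta$ partners, the swap preserves the active-bond count as well as the particle counts, and $H(\eta_0)=H(\eta_1)$.

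\emph{The path.} I would take the path in which the hole circulates twice clockwise around the central $2\times 2$ block via the hops
\[
S_2\to S_1,\; S_3\to S_2,\; S_4\to S_3,\; S_1\to S_4,\; S_2\to S_1,\; S_3\to S_2.
\]
A direct check shows that after these six hops the hole has migrated from $S_1$ to $S_3$, the type-$\tb$ particle has migrated from $S_3$ to $S_1$, and the two type-$\ta$ particles originally at $S_2,S_4$ have been exchanged; since type-$\ta$ particles are indistinguishable, the final configuration is exactly $\eta_1$. If some of $S_2,S_3,S_4$ are already empty in $\eta_0$, the corresponding hops are vacuous and the sub-sequence simply shortens, which only lowers the peak.

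\emph{Energy bookkeeping.} Each hop changes $H$ by $U$ times the net number of bonds lost. A hop-by-hop calculation, using that a type-$\tb$ particle temporarily placed on an odd site of a parity-consistent pattern has no active bond with its even-parity neighbours (and analogously for a type-$\ta$ placed on an even site), yields, in the worst case of a fully occupied slot, the sequence of increments
\[
+3U,\;+2U,\;+U,\;-U,\;-2U,\;-3U.
\]
The cumulative height hence peaks at exactly $H(\eta_0)+6U$ immediately after the third hop and descends monotonically afterwards back to $H(\eta_0)=H(\eta_1)$.

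\emph{Main obstacle.} The technical crux is to justify these three peak increments $+3U,+2U,+U$ uniformly: one must show that the type-$\ta$ particle pushed into $S_1$ loses exactly three bonds, the type-$\tb$ particle pushed into $S_2$ has a net loss of two bonds, and the type-$\ta$ particle pushed into $S_3$ has a net loss of one bond. Each of these counts is forced by the parity hypothesis, which determines the type of every occupied neighbour; any sparser configuration strictly lowers these net losses, so the $6U$ bound holds uniformly and the conclusion follows.
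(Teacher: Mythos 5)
Your proposal is correct and follows essentially the same route as the paper: an explicit six-hop path that rotates the contents of the central $2\times 2$ block of the slot by $180^{\circ}$ (yours circulates the hole in the opposite sense, which is equivalent by symmetry), with the energy controlled by bond counting in the fully occupied worst case, yielding the same increment sequence $+3U,+2U,+U,-U,-2U,-3U$ and peak $6U$. The only cosmetic difference is that the paper organizes the bound slightly more structurally, splitting the bond loss into rotating--fixed bonds ($\leq 6$, with equality only at the half-way configuration) and rotating--rotating bonds ($\leq 1$, and $0$ at the half-way configuration), whereas you verify the increments hop by hop; both arguments, like the paper's, implicitly reduce to the fully occupied slot.
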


\begin{figure}[htbp]
\centering
\subfigure[]
{\includegraphics[width=0.16\textwidth]{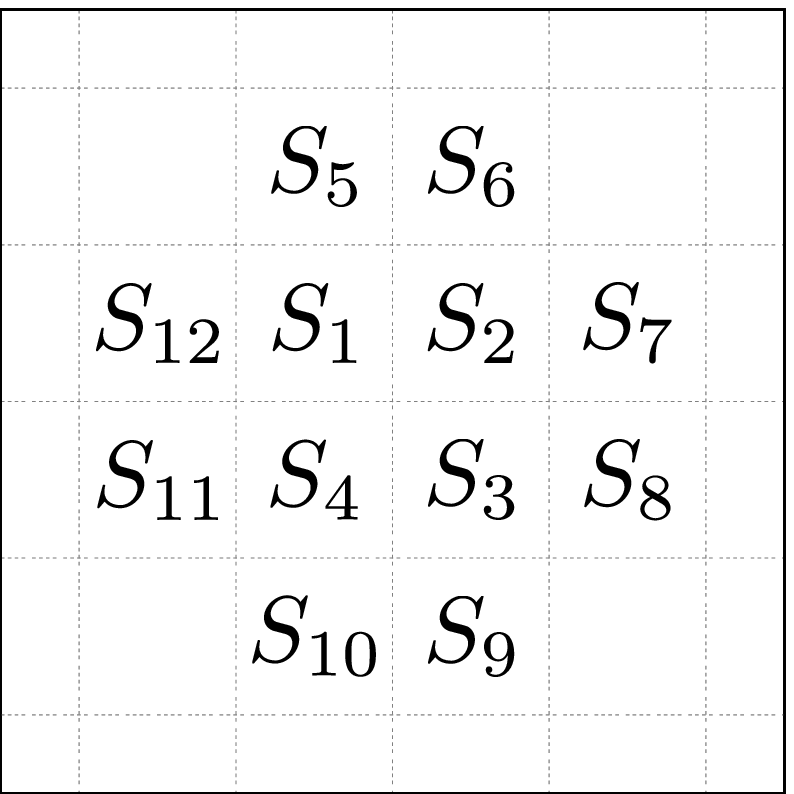}}
\quad
\subfigure[]
{\includegraphics[width=0.16\textwidth]{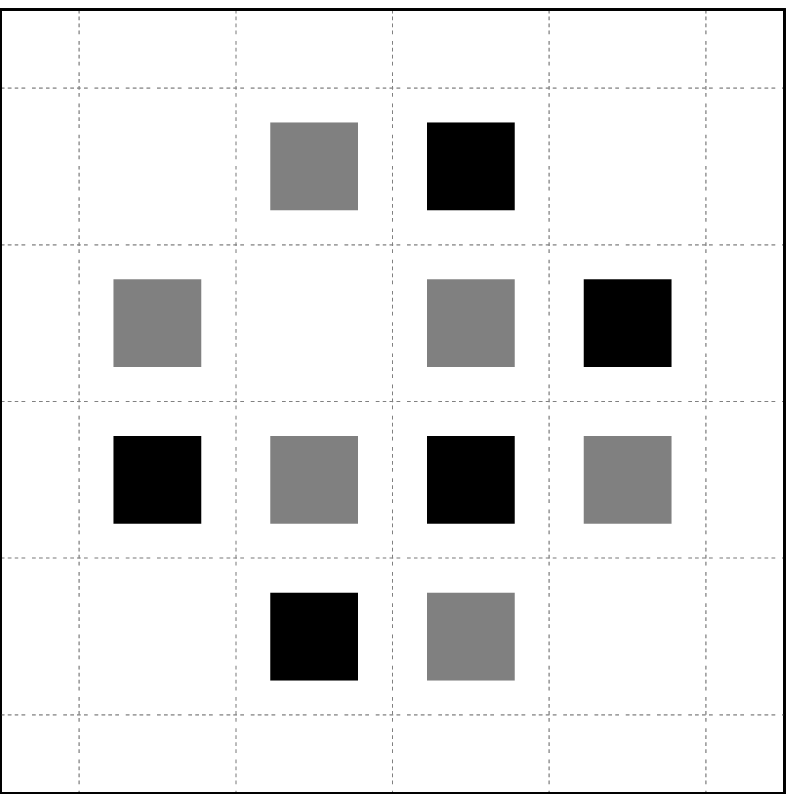}}
\quad
\subfigure[]
{\includegraphics[width=0.16\textwidth]{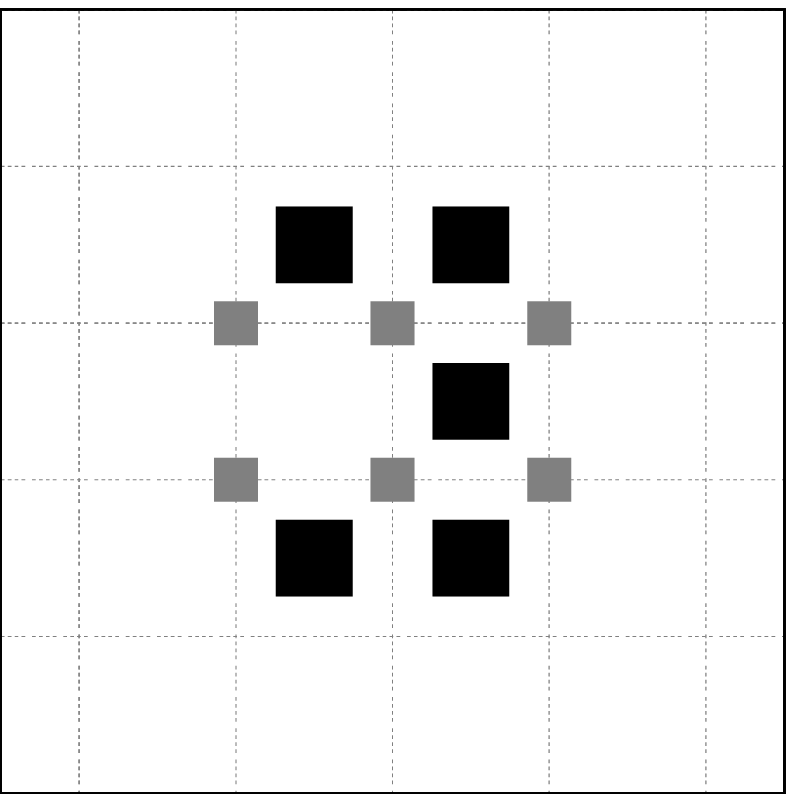}}
\quad
\subfigure[]
{\includegraphics[width=0.16\textwidth]{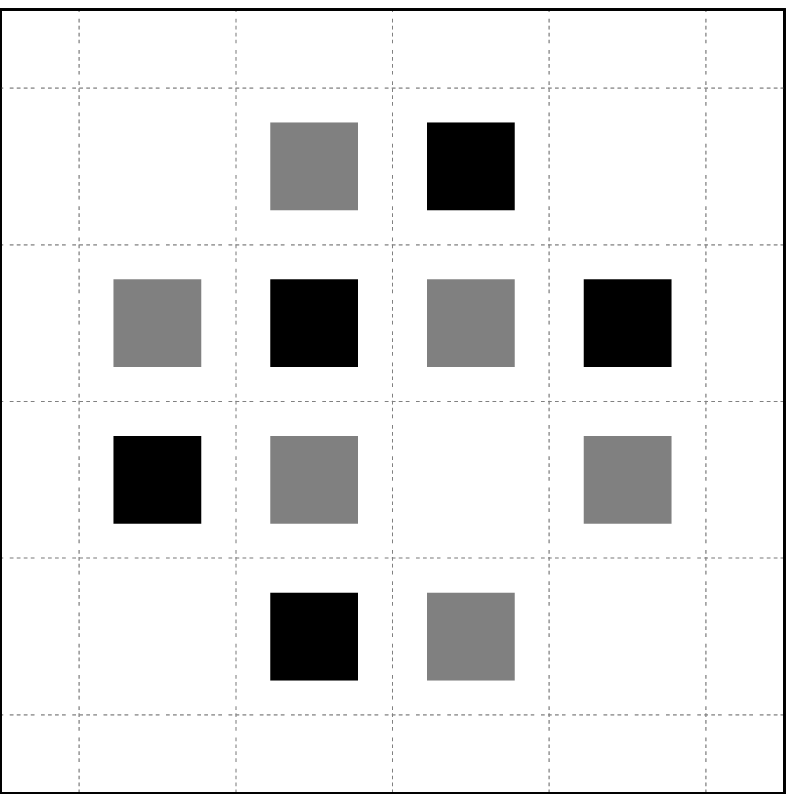}}
\quad
\subfigure[]
{\includegraphics[width=0.16\textwidth]{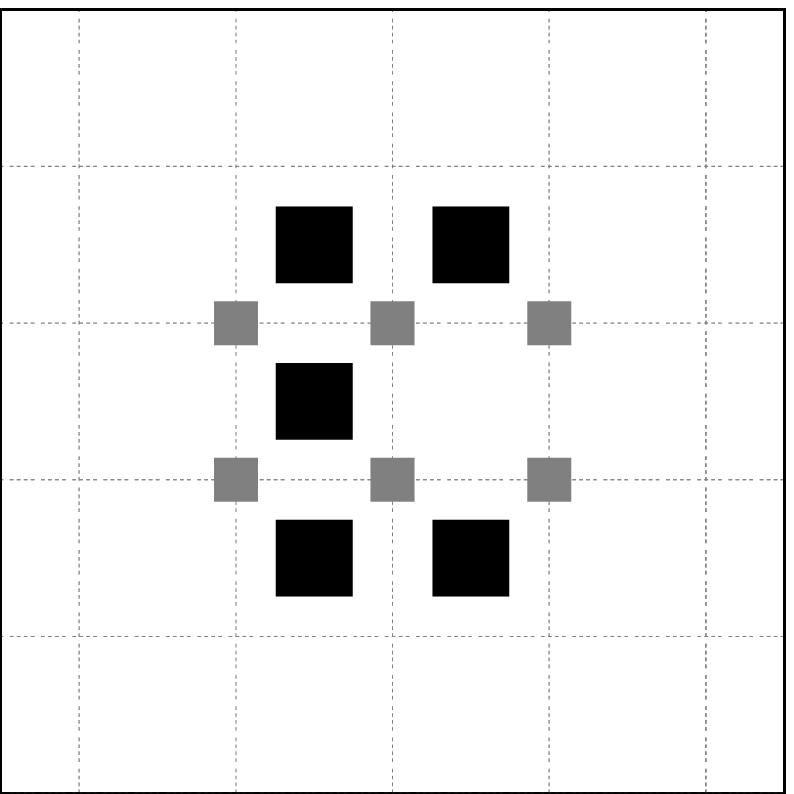}}
\caption{ 
(a) labelling of the sites in the slot (standard representation);
(b) example of $\eta_0$ in the slot (standard representation);
(c) example of $\eta_0$ in the slot (dual representation).
(d) $\eta_1$ in the slot (standard representation);
(e) of $\eta_1$ in the slot (dual representation).
}
\label{fig:slot lemma}
\end{figure}

\begin{proof}
W.l.o.g.\ we take $\eta_0$ as in Fig.~\ref{fig:slot lemma}(b--c). Let $a \to b$ 
denote the motion of a particle from site $a$ to site $b$. For the path $\omega$ 
we choose the following sequence of moves: $S_{4} \to S_{1}$; $S_{3} \to S_{4}$; 
$S_{2} \to S_{3}$; $S_{1} \to S_{2}$; $S_{4} \to S_{1}$; $S_{3} \to S_{4}$. The
first three moves and the second three moves each are a rotation by $\frac{\pi}{2}$ 
of the subconfiguration at the sites $S_{1}, S_{2}, S_{3}, S_{4}$. Note that all 
configurations in $\omega$ have the same number of particles of each type and 
hence the changes in energy only depend on the change in the number of active 
bonds. Let $M_{RF}$ be the loss of the number of active bonds between the rotating 
particles and the fixed particles, and $M_{R}$ the loss of the number of active 
bonds between the rotating particles. We must show that $M_{RF} + M_{R} \leq 6$ 
during the six moves. To that end, we first observe that $M_{RF} \leq 6$, since 
the total number of active bonds between the rotating particles and the fixed 
particles is at most $6$ (see Fig.~\ref{fig:slot lemma}(b)), and that $M_{RF}=6$ 
only after the first three moves are completed, i.e., when the configuration is 
such that all the rotating particles have a different parity with respect to the 
parity they had in configuration $\eta_{0}$ (recall that particles with different 
parity cannot share a bond). Next we observe that, by the choice of $\omega$, the 
value of $M_{R}$ can only be $0$ or $1$, and that $M_R=0$ after the first three 
moves are completed.
\end{proof}

Lemma~\ref{lemma on motion of particles inside a cluster} implies the following.

\begin{proposition}
\label{prop1}
Let $\eta$ be a $\btiled$ configuration with a unit hole. Then the configuration $\eta\prm$
obtained from $\eta$ by moving the unit hole elsewhere satisfies 
$H(\eta\prm)=H(\eta)$ and $\comlev(\eta,\eta\prm) \leq H(\eta) + 6U$.
\end{proposition}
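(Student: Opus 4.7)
The strategy is to apply Lemma~\ref{lemma on motion of particles inside a cluster} iteratively, using each invocation to shift the unit hole by one step along a dual diagonal. Concretely, I would center a slot $S$ so that its site $S_1$ coincides with the current hole and its slot-conjugate $S_3$ coincides with a type-$\tb$ particle of the same cluster. One application of the lemma then swaps the hole and that $\tb$-particle via a six-move subpath that never exceeds energy $H(\eta) + 6U$, with equal energies at the two endpoints. After the swap, the unit hole has moved by one unit in dual coordinates (equivalently, by $\sqrt{2}$ in standard coordinates).

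To connect an arbitrary pair of unit-hole positions $(h,h')$ inside the cluster, I first pass to the dual representation, where the cluster is a polyomino and each unit-hole position is a missing tile bordered on all four sides by present tiles. Any two such missing-tile positions are linked by a sequence of dual-diagonal single steps, since diagonal adjacency on $\Z^2$ generates the same connectivity as edge adjacency (any edge-adjacent step $(i,j)\to(i,j+1)$ decomposes as $(i,j)\to(i\pm 1,j+1)\to(i,j+1)$). At each intermediate step the configuration remains $\btiled$ with a unit hole, so the procedure may be iterated.

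Before invoking Lemma~\ref{lemma on motion of particles inside a cluster} I must check its hypothesis: all particles in $S$ share one parity. This is automatic from the $\btiled$ structure, because every particle of the cluster (type $\ta$ on one sublattice, type $\tb$ on the other) has the same parity in the sense of Section~\ref{sec coordinates}, and empty sites within $S$ impose no constraint. For each dual-diagonal step there are four candidate slot orientations (rotations by multiples of $\pi/2$), and at least one of them has its twelve sites consisting only of cluster particles and the hole itself; this one is the slot to use.

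Concatenating all the six-move subpaths produces a path $\omega\colon \eta \to \eta'$ along which the energy is $H(\eta)$ at the junctions between successive subpaths and at most $H(\eta)+6U$ inside each subpath. Hence $H(\eta')=H(\eta)$ and $\comlev(\eta,\eta')\le H(\eta)+6U$, proving the proposition. The main obstacle is the orientation-choice verification, particularly when the hole travels near the boundary of the cluster or near a neighbouring cluster of the opposite parity, where some of the four candidate slots may contain particles of the wrong parity; routing the diagonal walk so as to stay sufficiently deep inside the cluster (which is always possible since the hole is an interior feature with all four standard neighbours occupied by $\ta$) disposes of this difficulty.
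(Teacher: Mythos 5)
Your proposal is correct and follows essentially the same route as the paper, which derives the proposition by iterating Lemma~\ref{lemma on motion of particles inside a cluster} so that each slot exchange transports the unit hole to an adjacent tile centre (the paper gives only the figure of the iterated path, so your explicit checks of the parity hypothesis, of the connectivity of hole positions, and of the slot-orientation choice near the cluster boundary are a welcome elaboration rather than a deviation).
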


\noindent
A possible $6U$-path for a unit hole inside a $\btiled$ cluster is given in 
Fig.~\ref{fig:picture motion of unit hole}. This path is obtained through 
an iteration of local moves as explained in Fig.~\ref{fig:slot lemma}.

\begin{figure}[htbp]
\centering
\includegraphics[width=0.2\textwidth]{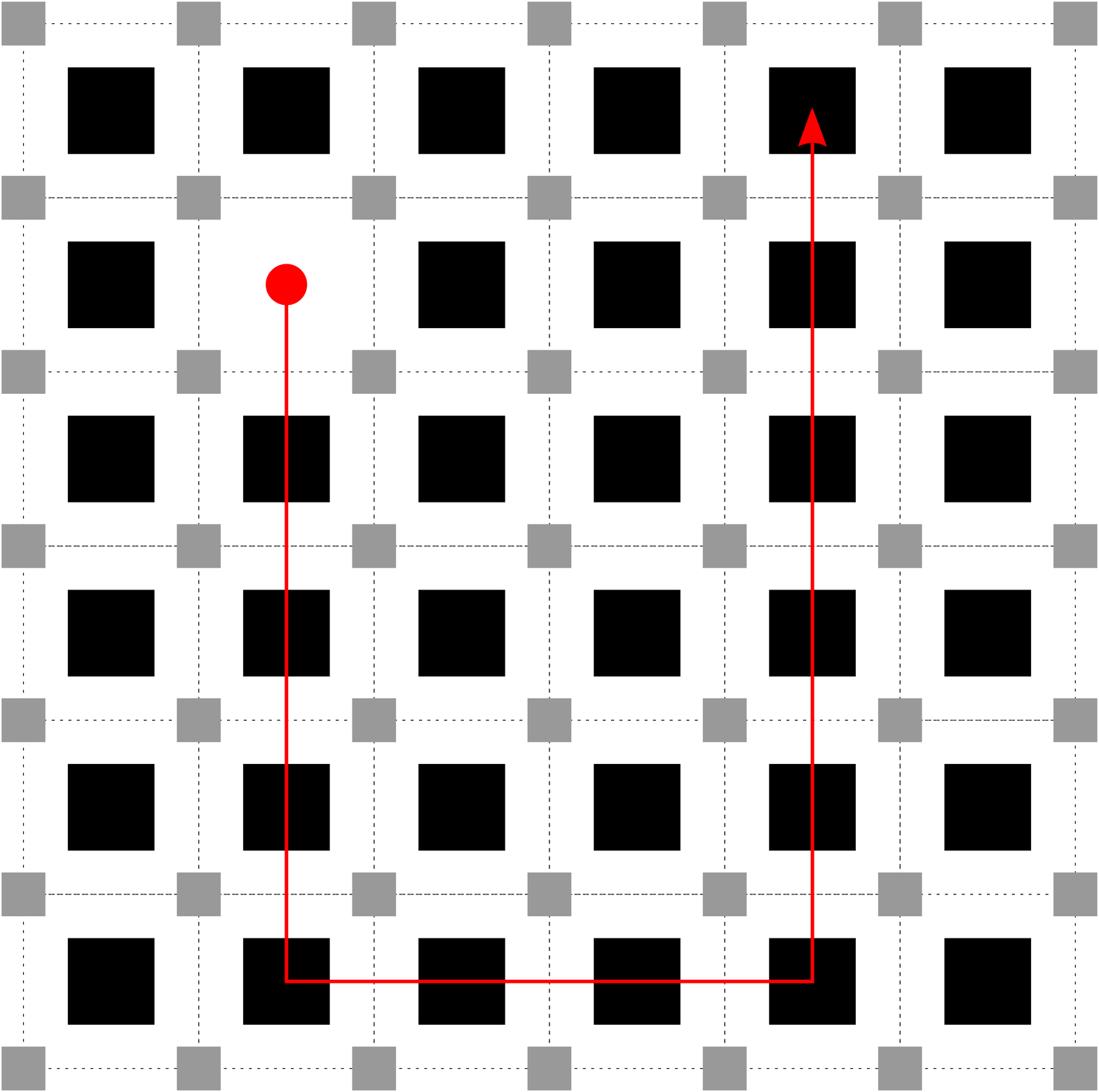}
\caption{Motion of a unit hole inside a $\btiled$ cluster.}
\label{fig:picture motion of unit hole}
\end{figure}


\subsubsection{Adding and removing $\abbars$ from lattice-connecting rectangles}
\label{sec growth of clusters}

\begin{lemma}\label{lemma barrier adding and removing bars}
Let $\eta$ be a configuration consisting of a single $\btiled$ lattice-connecting rectangle.
Then the configuration $\eta\prm$ obtained from $\eta$ by, respectively,
\begin{enumerate} 
	\item adding a $\abbar$ of length $\ell \ge \ell\starred$,
	\item adding a $\abbar$ of length $\ell < \ell\starred$,
	\item removing a $\abbar$ of length $\ell \ge \ell\starred$, 
	\item removing a $\abbar$ of length $\ell < \ell\starred$, 
\end{enumerate}
satisfies, respectively,
\begin{enumerate} 
	\item $H(\eta\prm) < H(\eta)$ and $\comlev(\eta, \eta\prm) \le H(\eta) + 2\Da + 2\Db - 4 U$,
	\item $H(\eta\prm) > H(\eta)$ and $\comlev(\eta, \eta\prm) \le H(\eta) + 2\Da + 2\Db - 4 U$,
	\item $H(\eta\prm) > H(\eta)$ and $\comlev(\eta, \eta\prm) \le H(\eta) + (\ell - 2)\epsi + 4U - \Da$,
	\item $H(\eta\prm) < H(\eta)$ and $\comlev(\eta, \eta\prm) \le H(\eta) + (\ell - 2)\epsi + 4U - \Da$.
\end{enumerate}

\end{lemma}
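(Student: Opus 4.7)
I plan to prove cases $1$ and $2$ by an explicit path construction and then deduce cases $3$ and $4$ by time-reversibility. Running a path $\eta\to\eta\prm$ backwards yields a path with the identical energetic profile, so $\comlev(\eta,\eta\prm)$ is insensitive to swapping the endpoints. Once the sign of $H(\eta\prm)-H(\eta)$ has been read off from the construction, the bounds in cases $3$--$4$ arise from the bound in cases $1$--$2$ simply by re-expressing the common communication height from the viewpoint of the configuration that carries the bar.

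The path mimics the reference path $\refpath$ of the proof of Theorem~\ref{theorem communication height}. First I build a $\btiled$ protuberance on the chosen side of the rectangle: bring a free particle of type $\tb$ from $\partial^-\Lambda$, pilot it along an empty lattice route (afforded by the lattice-connecting hypothesis) and attach it to the side ($+\Db$, then $-2U$), then bring and attach two free particles of type $\ta$ in succession ($+\Da-U$ each), contributing a total of $\Da-\epsi$. Next I iterate $\ell-1$ extension steps, each adding one $\btile$ at the concave corner created by the previous step: bring a free $\tb$ ($+\Db$), slide it into the concave corner where three bonds form simultaneously ($-3U$), and bring and attach a free $\ta$ ($+\Da-U$). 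Each extension contributes $-\epsi$. Hence $H(\eta\prm)-H(\eta)=\Da-\ell\,\epsi$, which by the definition of $\ell\starred$ in \eqref{eq value of critical length} is $\le 0$ for $\ell\ge\ell\starred$ (cases $1$, $3$) and $>0$ for $\ell<\ell\starred$ (cases $2$, $4$).

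To bound the barrier, I inspect the peak of $H$ along the path. Each ``bring free particle'' move produces a transient peak, since a particle freshly created at $\partial^-\Lambda$ carries no active bonds. A direct case check shows that the global maximum is attained just after the first extension's free $\tb$ has entered $\Lambda$ while the protuberance is already complete: the energy there equals $H(\eta)+\Da-\epsi+\Db = H(\eta)+2\Da+2\Db-4U$. Subsequent extensions start at baselines strictly lower by $\epsi$ each, and the peaks within the protuberance construction are dominated by the same quantity in the relevant regime (using $\Db\ge U$ from the proper metastable region \eqref{propmetreg}). This yields $\comlev(\eta,\eta\prm)\le H(\eta)+2\Da+2\Db-4U$, the bound in cases $1$--$2$. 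For cases $3$--$4$, writing the same height in terms of the configuration that carries the bar via $H(\eta_{\text{with bar}})=H(\eta_{\text{without bar}})+\Da-\ell\,\epsi$ converts the right-hand side into $H(\eta)+(\ell-2)\,\epsi+4U-\Da$.

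The main point requiring verification is that a concave corner is indeed available at every extension step so that the incoming $\tb$ gains exactly three bonds. This is automatic from the inductive construction: after placing the previous $\btile$, the new $\tb$ position is adjacent to two $\ta$ particles on the rectangle side and to one $\ta$ inherited from the most recently placed $\btile$, producing precisely the required three bonds; explicitly, the outer corner of the last $\btile$ is a good dual corner in the sense of Section~\ref{sec def}.
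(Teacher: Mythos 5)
Your proposal is correct and follows essentially the same route as the paper: the bar is added by mimicking the reference path $\refpath$ (protuberance, then $\ell-1$ corner attachments, giving $\D H=\Da-\ell\epsi$ and peak $2\Da+2\Db-4U$) and removed by reversing that path, and your explicit conversion between the energies of the two endpoints (which differ by $\Da-\ell\epsi$) correctly turns the barrier $2\Da+2\Db-4U$ into $(\ell-2)\epsi+4U-\Da$. The only imprecision is your justification that the protuberance-stage peak $\Db$ is dominated by $2\Da+2\Db-4U$: this requires $\Da\ge\epsi$, i.e.\ $\ell\starred\ge 2$, rather than $\Db\ge U$ --- an edge case the paper's own proof also glosses over.
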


\begin{proof} Recall the computations in \
Sections~\ref{sec standard configurations are optimal btiled configurations} and \ref{sec id sub}.

\medskip\noindent
{\bf Adding a $\abbar$.}
Adding a $\abbar$ of length $\ell$ on a lattice-connecting side of a $\btiled$ rectangle 
(i.e., a side such that all the particles of type $\ta$ on that side are lattice-connecting) 
can be done in two steps: (i) initiate the $\abbar$ by adding a $\btiled$ protuberance
(see Fig.~\ref{fig-add_protuberance}); (ii) complete the $\abbar$ by adding a $\btile$ 
(in a ``corner'') $\ell-1$ times (see Fig.~\ref{fig-add_tile}). This can be achieved within 
energy barrier $\D H = 2\Da + 2\Db - 4 U$ by following the same moves as the reference path 
$\omega\starred$ described in Section~\ref{sec id sub}. The energy difference due to the extra 
$\abbar$ of length $\ell$ is $\D H(\ell) = \Da - \ell \epsi$, which changes sign at 
$\ell=\ell\starred$.

\begin{figure}[htbp]
\begin{centering}
{\includegraphics[height=0.25\textheight]{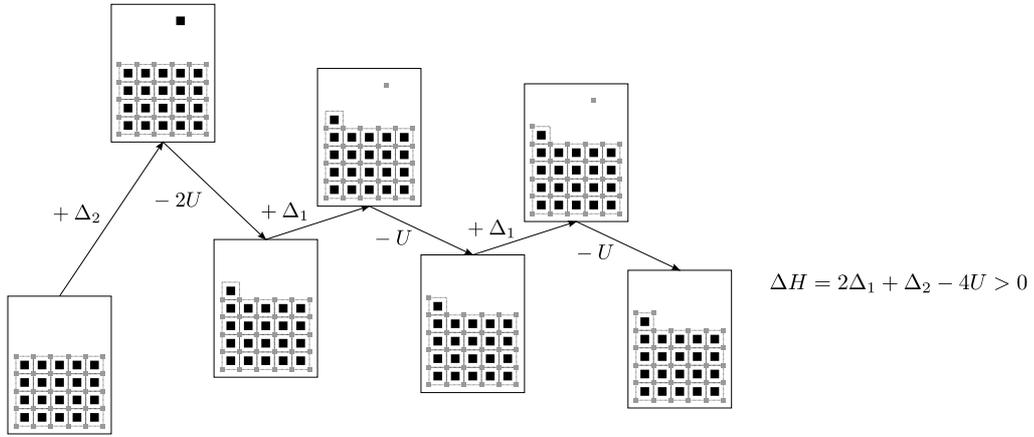}}
\par\end{centering}
\caption{A $\btiled$ protuberance is added to a side of a dual rectangle within 
energy barrier $\Db$.}
\label{fig-add_protuberance}
\end{figure}

\begin{figure}[htbp]
\begin{centering}
{\includegraphics[height=0.25\textheight]{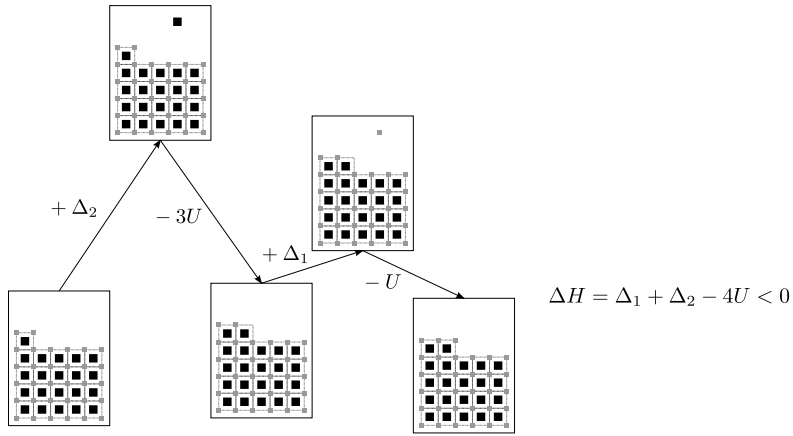}}
\par\end{centering}
\caption{A $\btile$ is added in a corner between $\btiles$ within a energy barrier $\Db$.}
\label{fig-add_tile}
\end{figure}

\medskip\noindent
{\bf Removing a $\abbar$.}
Removing a $\abbar$ of length $\ell$ from a lattice-connecting rectangle can be done
by following the reverse of the path used to add a $\abbar$: (i) remove $\ell-1$ 
times a $\btile$ from a bar; (ii) remove the last $\btiled$ protuberance. This can be 
achieved within energy barrier $\D H(\ell)=(\ell - 2)\epsi + 4U - \Da$. If the cluster consists of 
one $\abbar$ only, then the path just described leaves $\ell+1$ free particles of type 
$\ta$ inside $\Lambda$, which can be removed (free of energy cost) afterwards.
\end{proof}

We use Lemma~\ref{lemma barrier adding and removing bars}
to build  a \emph{northern rectangle} on top of a $\abbar$ as follows.

\begin{definition}
\label{def rectangles touching north side}
Let $b$ denote the vertical coordinate of the sites lying on the north-side of 
$\partial^-\Lambdaminus$. For a given $\btiled$ rectangle $r$ in $\Lambdaminus$,
let $b_{r}$ denote the vertical coordinate of the northern-most particles of 
type $\ta$in $r$. Then $r$ is said to be touching the north-side of 
$\partial^-\Lambdaminus$ if $b_{r} = b$ or $b_{r} = b - \frac{1}{2}$.
\end{definition}

\noindent
In words, a $\btiled$ rectangle is said to be touching the north-side of 
$\partial^-\Lambdaminus$ if it is not possible to add a $\abbar$ on the north-side 
within $\Lambdaminus$. Rectangles touching the south-, east- or west-side of 
$\Lambdaminus$ are defined similarly.

Let $\bar{b}$ be a horizontal $\abbar$ of length $\ell$, i.e., a $\btiled$ 
$\ell\times 1$ rectangle. Suppose that all sites above $\bar{b}$ are vacant. Then it is 
possible to successively add horizontal $\abbars$, say $m$ in total, on top of 
$\bar{b}$ until the north side of the rectangle grown in this way touches the 
north-side of $\Lambdaminus$. The $\btiled$ rectangle with $m+1$ rows and $\ell$ 
columns such that $\bar{b}$ is its lower-most horizontal $\abbar$ is denoted 
by $\nrec{\bar{b}}$ and is called the northern rectangle of $\bar{b}$. 

Lemma~\ref{lemma barrier adding and removing bars} implies the following.

\begin{proposition}
\label{prop2}
Let $\eta$ be a configuration containing a horizontal $\abbar$ $\bar{b}$ of length $\ell\geq
\ell\starred$. Then the configuration $\eta\prm$ obtained from $\eta$ by building 
$\nrec{\bar{b}}$ satisfies $H (\eta\prm) < H(\eta)$ and $\comlev(\eta,\eta\prm)
\leq H(\eta) + 2\Da + 2\Db - 4U$.
\end{proposition}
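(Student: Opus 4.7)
The plan is to prove Proposition 5.5 by iterated application of Lemma 5.2(1). Start from $\eta_0 = \eta$ and inductively define $\eta_i$, for $i = 1, \dots, m$, as the configuration obtained from $\eta_{i-1}$ by placing one additional horizontal $\abbar$ of length $\ell$ on top of the current growing stack, where $m$ is the integer determined by Definition 5.4 so that $\eta_m$ contains $\nrec{\bar b}$. At each step, the portion of $\eta_{i-1}$ that is altered is contained in the strip of sites directly above the most recently added bar; by the standing assumption in the paragraph introducing $\nrec{\bar b}$ (``all sites above $\bar b$ are vacant''), this strip is empty in $\eta$ and remains empty in $\eta_{i-1}$ up to the moment we grow into it. The remainder of $\eta$ contributes an additive constant to the Hamiltonian on the segment $\eta_{i-1} \to \eta_i$ and plays no role in the argument.

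First I would verify the hypothesis of Lemma 5.2(1) at each step. The subconfiguration consisting of the $\ell\times i$ $\btiled$ rectangle built so far is a single $\btiled$ rectangle; moreover, its top $\abbar$ is lattice-connecting because, by construction, every site strictly above this top row is empty in $\eta_{i-1}$ and thus admits an open lattice path to $\partial^- \Lambda$. Hence Lemma 5.2(1) applies to the addition of the next $\abbar$. Since $\ell \ge \ell\starred$, it yields
\begin{equation*}
H(\eta_i) < H(\eta_{i-1}) \quad\text{and}\quad \comlev(\eta_{i-1}, \eta_i) \leq H(\eta_{i-1}) + 2\Da + 2\Db - 4U.
\end{equation*}

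Next I would assemble these one-step bounds. The strict monotonicity $H(\eta_i) < H(\eta_{i-1})$ iterated in $i$ gives $H(\eta_m) < H(\eta)$, which is the first assertion of the proposition. For the communication height, the same monotonicity yields $H(\eta_{i-1}) \leq H(\eta)$, so that
\begin{equation*}
\comlev(\eta_{i-1}, \eta_i) \leq H(\eta) + 2\Da + 2\Db - 4U \qquad \text{for } i = 1, \dots, m.
\end{equation*}
Using the standard inequality $\comlev(\xi,\zeta) \leq \max\{\comlev(\xi,\chi), \comlev(\chi,\zeta)\}$ (already invoked in the proof of Lemma 1.2) along the chain $\eta = \eta_0, \eta_1, \dots, \eta_m = \eta\prm$, I conclude $\comlev(\eta,\eta\prm) \leq H(\eta) + 2\Da + 2\Db - 4U$.

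The only mildly delicate point — which I expect to be the main (but very minor) obstacle — is the bookkeeping that at each step the ``lattice-connecting side'' requirement of Lemma 5.2 is genuinely met in the presence of the remainder of $\eta$, and that the energy differences produced by the one-step moves are unaffected by the rest of the configuration. Both reduce to the observation that the local moves used inside the proof of Lemma 5.2 occur entirely in the vacant strip above the current rectangle, where the remainder of $\eta$ contributes no interactions. Once this is noted, the induction goes through verbatim.
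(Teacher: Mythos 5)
Your proposal is correct and is exactly the argument the paper intends: the paper states Proposition~\ref{prop2} as an immediate consequence of Lemma~\ref{lemma barrier adding and removing bars}(1), and your iteration of that lemma over the successively added $\abbars$ — using the strict energy decrease at each step together with the ultrametric inequality for $\comlev$, and the vacancy of the strip above $\bar{b}$ to guarantee lattice-connectivity — is precisely the bookkeeping the paper leaves implicit. No gaps.
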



\subsubsection{Changing bridges into $\abbars$}
\label{sec main procedures}

\begin{definition}
A (south-)bridge $b$ consists of a beam $\tilde{b}$ and two (south-)pillars at 
the outer-most sites of the (south-)basement of $\tilde{b}$. The (south-)support 
of $b$ coincides with the (south-)support of $\tilde{b}$. If each of the central 
sites of the tiles of the (south-)support of the bridge is occupied by a particle 
of type $\tb$, then the bridge is said to be stable (see {\rm Fig.~\ref{fig:bridge}}). 
\end{definition}

\noindent
Clearly, a $\abbar$ is a stable bridge. North-, east- and west-bridges are 
defined in a similar way.

\begin{figure}[htbp]
\centering
{\includegraphics[height=0.1\textwidth]{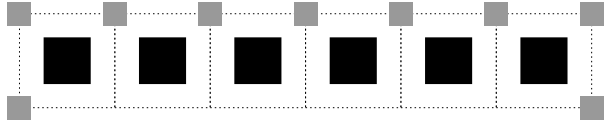}}
\caption{A stable bridge of length $6$.}
\label{fig:bridge}
\end{figure}

Given a bridge $b$, let $\bar{b}$ denote the $\abbar$ obtained by $\btiling$ $b$. 
Lemma~\ref{lemma btiles are optimal tiles} implies the following.

\begin{lemma}
\label{lemma btiled bridges are optimal}
Let $\eta$ be a configuration containing a bridge $b$ whose support is not 
$\btiled$. Then the configuration $\eta\prm$ obtained from $\eta$ by changing $b$ to $\bar{b}$ 
satisfies $H (\eta\prm) < H (\eta)$. 
\end{lemma}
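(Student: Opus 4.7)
My plan is to start from the weaker bound already granted by Corollary~\ref{corollary optimality of btiled pillared beams} (which applies because every bridge is, in particular, a pillared beam) and then upgrade the resulting inequality $H(\eta\prm)\le H(\eta)$ to a strict one. I will represent the transition $b\to\bar{b}$ as an iteration of the tile-by-tile improvements supplied by Lemma~\ref{lemma btiles are optimal tiles}, and show that not every step in the iteration can fall into the unique subcase of that lemma which may fail to give a strict energy decrease.

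Concretely, I would choose an order in which to $\btile$ the tiles of $\supp(b)$ that are not yet $\btiles$ in $\eta$. At each step, the tile selected has at least three junction sites occupied by particles of type $\ta$ (this is guaranteed by the beam and the two pillars of $b$, together with the $\ta$ junctions already produced by earlier steps), so Lemma~\ref{lemma btiles are optimal tiles} applies and yields $\D H\le 0$. The only subcase in the proof of that lemma for which $\D H$ is not strictly negative is case (vii): a saturated $\tb$ sits at one junction, the other three junctions carry $\ta$, and the center carries $\ta$; $\btiling$ then merely swaps the central $\ta$ with the junction $\tb$, with $\D H=0$. So I only need to rule out the possibility that \emph{every} non-$\btile$ tile in $\supp(b)$ falls in case (vii).

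The decisive structural input is that $\bar{b}$ is a $\abbar$, hence contains $\ell\ge 1$ saturated particles of type $\tb$ in its central row, whereas the bridge $b$ itself contains no particle of type $\tb$ at all (only the $\ell+1$ beam particles and the two pillars, all of type $\ta$). Let $m$ be the number of central positions of $\bar{b}$ that already carry a $\tb$ particle in $\eta$. If $m<\ell$, then at some step of the iteration a fresh $\tb$ is placed at such a central position, which puts that step in case (i), (v), (vi) or (viii) of the proof of Lemma~\ref{lemma btiles are optimal tiles}; by (\ref{subpropmetreg}) each of these cases contributes $\D H\le\max\{\Da+\Db-4U,\;\Db-4U,\;\Db-\Da-4U\}<0$, so the overall energy change is strict. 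If on the contrary $m=\ell$, the whole central $\tb$-row of $\bar{b}$ is already realised in $\eta$; combined with the $\ta$ particles of the beam and of the two pillars, the saturation of these $\tb$ forces every junction site of every tile of $\supp(b)$ to be occupied by $\ta$ in $\eta$, so $\supp(b)$ is already $\btiled$ in $\eta$, contradicting the hypothesis.

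The main obstacle I expect is carrying out the geometric verification in the second scenario: showing that the two pillars placed at the \emph{outer-most} sites of the basement are exactly what is needed to seal the corners, so that once the central $\tb$-row and the $\ta$-frame are in place no junction site in $\supp(b)$ can remain empty or occupied by a $\tb$. This rests on a case-by-case inspection of the four tiles closest to the two pillars, and is precisely the point at which the definition of a \emph{bridge} (as opposed to a generic pillared beam) is used in an essential way.
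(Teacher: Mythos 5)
Your overall strategy---iterating Lemma~\ref{lemma btiles are optimal tiles} tile by tile from one pillar to the other and then showing that not every step can land in the single non-strict subcase (vii)---is the natural one, and indeed the paper offers no explicit proof here (it merely asserts that Lemma~\ref{lemma btiles are optimal tiles} ``implies'' the statement), so the strictness argument is precisely what needs to be supplied. However, both branches of your dichotomy on $m$ are flawed. In the branch $m<\ell$ you claim that a central site not already carrying a particle of type $\tb$ forces a \emph{fresh} particle of type $\tb$ to be brought in, placing that step in case (i), (v), (vi) or (viii). This is false: the particle of type $\tb$ that ends up at the centre may instead be \emph{relocated} from the fourth junction $t_{c}$, which is exactly case (iii) (centre empty) or case (vii) (centre of type $\ta$). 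Case (iii) is strictly negative, but case (vii) is the non-strict one, so this branch does not exclude the scenario in which every deficient centre is repaired by a zero-cost case-(vii) swap. In the branch $m=\ell$ you infer that $\supp(b)$ is already $\btiled$ from ``the saturation of these $\tb$'', but $m=\ell$ only says the centres are \emph{occupied} by particles of type $\tb$, not that they are saturated; if an interior junction of the basement is empty the support is not $\btiled$ and no contradiction arises. (That branch is rescuable---every non-$\btile$ tile then falls in case (ii) or (iv), both strictly negative---but not by the argument you gave.)

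The case-(vii) loophole is closed by the second pillar, i.e., by the feature of a bridge that you flag at the end but do not use where it is actually needed. Label the tiles of $\supp(b)$ as $\tau_{1},\dots,\tau_{\ell}$ from one pillar to the other, so that when $\tau_{k}$ is treated its three known junctions of type $\ta$ are the two beam particles and the junction shared with $\tau_{k-1}$ (or the first pillar), while the fourth junction $t_{c}$ is the one shared with $\tau_{k+1}$ (or, for $k=\ell$, the second pillar). Suppose every treated tile were case (vii) with $\D H=0$. A zero-cost case-(vii) swap at $\tau_{k}$ requires the particle of type $\tb$ at $t_{c}$ to lose no bonds and the incoming particle of type $\ta$ to gain none, which forces all three neighbours of $t_{c}$ outside $\tau_{k}$---in particular the centre of $\tau_{k+1}$---to be of type $\ta$. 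Since treating $\tau_{k}$ does not touch the centre of $\tau_{k+1}$, the tile $\tau_{k+1}$ is then also not a $\btile$ and must itself be treated; propagating rightwards, $\tau_{\ell}$ must be treated, but its fourth junction is the second pillar and hence of type $\ta$, so its treatment falls in case (v) or (viii), both strictly negative---a contradiction. Hence at least one step strictly lowers the energy, which is the statement your proof is missing.
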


Lemma~\ref{lemma btiled bridges are optimal} leads us to the following.

\begin{proposition}
\label{prop3}
\label{lemma btiled bridges can be achieved}
Let $\eta$ be a configuration containing a (south-)bridge $b$ whose (south-)support 
is not $\btiled$ such that the particles of its beam are lattice-connecting. Then 
the configuration $\eta\prm$ obtained from $\eta$ by $\btiling$ $\supp(b)$ satisfies
$H(\eta')<H(\eta)$ and $\comlev(\eta,\eta\prm)\leq H(\eta)+4U+\Da$. 
\end{proposition}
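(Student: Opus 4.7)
The plan is to construct $\omega: \eta \to \eta'$ by $\btiling$ the $\ell$ tiles in the centre row of $\supp(b)$ one at a time, proceeding from the left pillar toward the right. Label these tiles $T_1, \ldots, T_\ell$. By construction, every $T_k$ has its NW and NE junctions occupied by the beam particles of type $\ta$ that flank it, while $T_1$'s SW junction and $T_\ell$'s SE junction are the left and right pillars. Once $T_1, \ldots, T_{k-1}$ have been $\btiled$, the SW junction of $T_k$ (which coincides with the SE junction of $T_{k-1}$) is also of type $\ta$, so that $T_k$ begins its turn with three junctions of type $\ta$. Lemma~\ref{lemma btiles are optimal tiles} then guarantees that $T_k$ can be turned into a $\btile$ with a non-positive net change of energy.

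To implement this $\btiling$ via Kawasaki moves, I would exploit the lattice-connecting hypothesis on the beam to funnel particles in from $\partial^-\Lambda$. For $k<\ell$, the key manoeuvre is to temporarily displace the beam particle $B_{k+1}$ at NE of $T_k$ to an empty site above the beam; this is possible by hypothesis, and at the moment of displacement $B_{k+1}$ has no active bond (because the centres of both $T_k$ and $T_{k+1}$ are empty), so the displacement is free of energy cost. One then routes a type-$\tb$ particle from $\partial^-\Lambda$ through the vacated NE junction into the centre of $T_k$, and restores $B_{k+1}$ to its place. A second, analogous manoeuvre routes a type-$\ta$ particle through the vacated junction and the still-empty centre of $T_{k+1}$ into the SE junction of $T_k$. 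For $T_\ell$ the second manoeuvre is not needed, since SE of $T_\ell$ coincides with the right pillar.

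For the energy bookkeeping, the peak excess over the pre-iteration energy $E_{k-1} \le H(\eta)$ is attained while the type-$\tb$ particle is in transit with zero active bonds, contributing $\Da_2$. By \eqref{subpropmetreg}, $\Db < 4U - \Da < 4U+\Da$, so the path stays below $H(\eta) + 4U + \Da$. The supplementary manoeuvres of the SE-filling step --- displacing the now-bonded $B_{k+1}$ at cost $U$ and routing a transiting type-$\ta$ at cost $\Da$ --- sit on top of a state whose energy is already $3U$ below $E_{k-1}$, so their contribution stays below the barrier. The net inequality $H(\eta') < H(\eta)$ is given by Lemma~\ref{lemma btiled bridges are optimal}.

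The main obstacle is adapting the scheme to configurations $\eta$ in which $\supp(b)$ already contains extraneous particles outside the bridge skeleton (for instance a type-$\ta$ or type-$\tb$ particle stuck at the centre or SE junction of some $T_k$). These must be cleared before the routing can proceed, which requires invoking the case-by-case analysis in the proof of Lemma~\ref{lemma btiles are optimal tiles}; one must also verify that the vacancy corridors used to route incoming particles remain intact at every iteration, which they do because only the interior of $\supp(b)$ is modified while the corridors lie in the above-beam region outside $\supp(b)$.
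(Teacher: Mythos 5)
Your outline only covers the special case in which the support of the bridge is empty apart from the beam and the two pillars, and that is not where the difficulty (or the barrier $4U+\Da$) lies. The hypothesis is merely that $\supp(b)$ is not $\btiled$: the tile centres and the basement (SE-junction) sites may already be occupied, in particular a basement site may carry a particle of type $\tb$, and a centre may already carry a particle of either type. In those situations your key manoeuvre breaks down at the first step: the beam particle you want to displace is bonded (so the displacement is not free -- note also that nothing in the hypotheses prevents it from being bonded to a particle of type $\tb$ sitting \emph{north} of the beam, and the site ``above the beam'' where you park it need not be empty), and the routes ``through the still-empty centre of $T_{k+1}$'' need not exist. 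You acknowledge this as ``the main obstacle'' and defer it to the case analysis of Lemma~\ref{lemma btiles are optimal tiles}, but that lemma is a static energy comparison between the initial and final tile contents; it does not produce admissible Kawasaki paths, let alone paths whose maximum stays within $H(\eta)+4U+\Da$. Constructing such paths in the pre-occupied cases is precisely the content of Proposition~\ref{prop3}, so the deferral leaves the actual claim unproved; your peak estimate ($\Db$ for a free particle of type $\tb$ in transit) only pertains to the clean case.

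For comparison, the paper's proof proceeds along the \emph{basement} rather than the centres: it brings a particle of type $\ta$ to the leftmost basement site $s_1$ that is empty or occupied by a particle of type $\tb$, invoking Lemma~\ref{lemma btiles are optimal tiles} only to guarantee that the resulting configuration has energy $\leq H(\eta)$, and then iterates, since $s_1$ and $s_\ell$ are now the pillars of a bridge of length $\ell-1$. The two cases at $s_1$ are handled explicitly: if $s_1$ is empty, the beam particle diagonally above it is walked down (a slot-type move as in Lemma~\ref{lemma on motion of particles inside a cluster}, within $4U$) and replenished from the gas (within $\Da$); if $s_1$ carries a particle of type $\tb$, one first detaches and removes a neighbouring particle of type $\ta$ and the offending particle of type $\tb$, which is exactly the sub-path on which the level $H(\eta)+4U+\Da$ is reached. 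Your proposal contains no counterpart of this second case, so as it stands it establishes neither the stated barrier nor the proposition in its stated generality.
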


\begin{proof}
Let the (south-)bridge $b$ have length $\ell$. Label the $\ell + 1$ sites of its 
(south-)basement as $s_{0},s_{1},\ldots,s_{\ell}$, from the left to the right. In order 
to show that $\supp(b)$ can be $\btiled$ within energy barrier $4U+\Da$, it is enough 
to show that within the same energy barrier a particle of type $\ta$ can be brought 
to a site of the basement of $b$ (from the left) that is empty or is occupied by 
a particle of type $\tb$. W.l.o.g.\ $s_{1}$ may be assumed to be such a site. The 
configuration thus obtained has an energy that is at most the energy of the original 
configuration (see Lemma~\ref{lemma btiles are optimal tiles}). The claim follows by
noting that the particles of type $\ta$ at the extremal sites $s_{1}$ and $s_{\ell}$ 
are the two pillars of a (south-)bridge of length $\ell - 1$ whose basement consists 
of the sites $s_{1},s_{2},\ldots,s_{\ell}$.
	
It remains to show how a particle of type $\ta$ can be brought to site $s_{1}$.
Label the site north-west of $s_{1}$ by $v_{1}$ , and the site north-east of $v_{1}$
by as $v_{2}$. Two cases need to be distinguished:

\medskip\noindent
(1) If $\eta(s_{1}) = 0$, then, by the same argument as in the proof of Lemma~\ref{lemma
on motion of particles inside a cluster}, it is easy to show that the particle of type 
$\ta$ at $v_{2}$ can be moved to $s_{1}$ (to obtain a configuration $\bar\eta$ with 
$H(\bar\eta) \leq H(\eta)$) without exceeding energy level $H(\eta) + 4U$. The 
configuration $\eta\prm$ is reached within an energy barrier $\Da$ by bringing a particle 
of type $\ta$ inside $\Lambda$ and moving it to $v_{2}$.

\medskip\noindent
(2) If $\eta(s_{1}) = 2$, then consider the following path. First detach ($\D H =  2U$) 
and remove ($\D H =  -\Da$) the particle of type $\ta$ at $v_{2}$, and afterwards detach 
($\D H =  2U$) and remove ($\D H =  -\Db$) the particle of type $\tb$ at $v_{3}$. Next, 
move the particle of type $\tb$ at site $s_{1}$ to site $v_{1}$ ($\D H \leq 0$; 
this particle has at most $2$ active bonds when it sits at $s_{1}$), and finally bring 
a particle of type $\ta$ ($\D H =  \Da$) to site $v_{2}$ ($\D H =  -2U$). Call this 
configuration $\bar\eta$. Note that $H(\bar\eta) < H(\eta)$, since effectively 
a particle of type $\tb$ with at most two active bonds has been removed, and $\comlev
(\eta,\eta\prm) = H(\eta) + 4U + \Da$. Finally, observe that $\eta\prm$ is the same 
configuration as $\eta$ in Case (1). 
\end{proof}


\subsubsection{Maximally expanding $\btiled$ rectangles}
\label{maximal expansion btiled rectangle}

The mechanism presented in this section, which is called \emph{north maximal 
expansion} of a $\btiled$ rectangle, is such that it can be applied to a 
$\btiled$ rectangle whose north-side is lattice-connecting (even though this 
condition is not restrictive). South, east and west maximal expansion of a 
$\btiled$ cluster are analogous.

\begin{definition}
The north maximal expansion comes in two phases: a growing phase and a 
smoothing phase.\\
(i) The growing phase consists of the following three 
steps repeated cyclically:
\begin{enumerate}
\item 
If the particles of type $\ta$ on the south-side of the rectangle, either at 
the beginning or obtained after step~\ref{alg max expansion step west}, constitute a 
south-pillared beam $\tilde{b}_{s}$, then change $\supp(\tilde{b}_{s})$ into 
a $\abbar$.
\label{alg max expansion step south}
\item 
If the particles of type $\ta$ on the east-side of the rectangle, obtained after
step~\ref{alg max expansion step south}, constitute an east-pillared beam $\tilde{b}_{e}$, 
then change $\supp(\tilde{b}_{e})$ into a $\abbar$.
\label{alg max expansion step east}
\item 
If the particles of type $\ta$ on the west	-side of the rectangle, obtained after
step~\ref{alg max expansion step east}, constitute a west-pillared beam $\tilde{b}_{w}$,
then change $\supp(\tilde{b}_{w})$ into a $\abbar$.
\label{alg max expansion step west}
\end{enumerate} 
The growing phase ends after three consecutive steps leave the configuration unchanged.\\
(ii) The smoothing phase consists of removing all the particles of type $\tb$ that are 
adjacent to the ones on the sides of the rectangle that is built during the growing phase. 
Note that these particles have at most two active bonds (otherwise it would be possible 
to identify another pillared beam), and therefore removal of these particles lowers the 
energy.
\end{definition}

\noindent
The outcome of the north maximal expansion (see Fig.~\ref{fig:expansion}) 
of a $\btiled$ rectangle is again a $\btiled$
rectangle, containing the old rectangle and such that the northern-most $\abbar$ of the 
new rectangle has the same vertical coordinate.

\begin{figure}[htbp]
\centering
\subfigure[\label{maxexp1}]
{\includegraphics[width=0.25\textwidth]{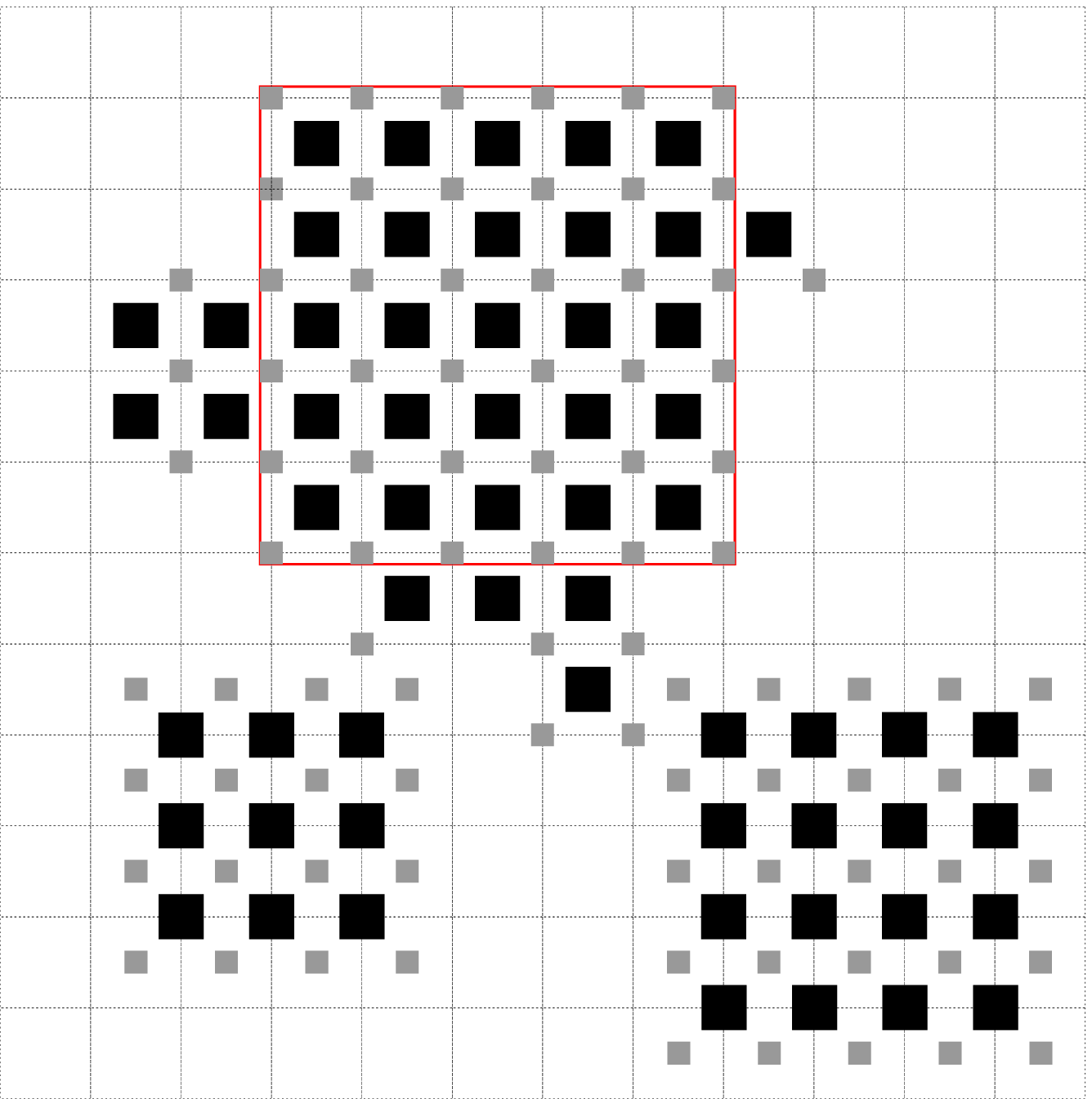}}
\qquad
\subfigure[\label{maxexp2}]
{\includegraphics[width=0.25\textwidth]{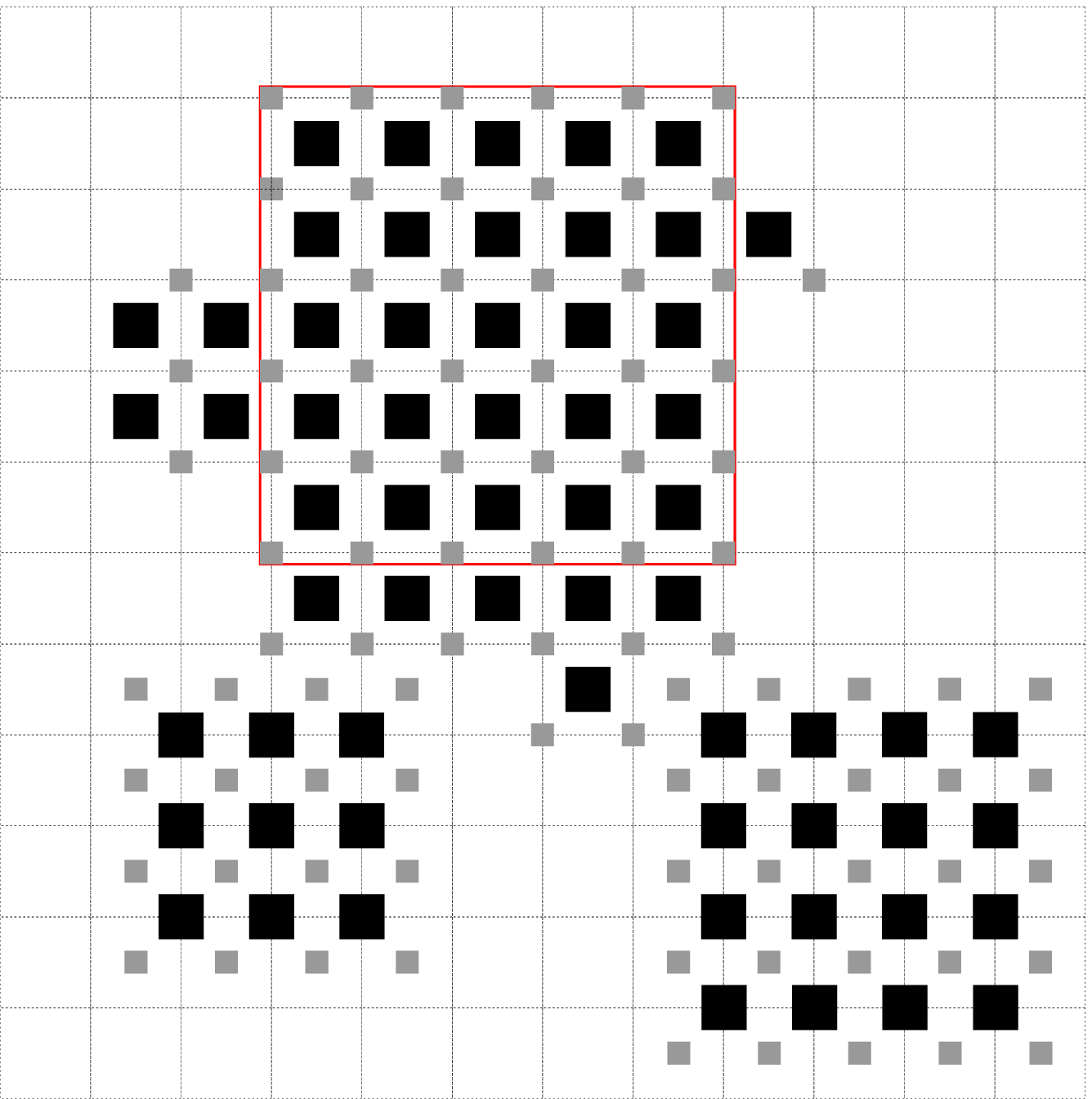}}
\qquad
\subfigure[\label{maxexp3}]
{\includegraphics[width=0.25\textwidth]{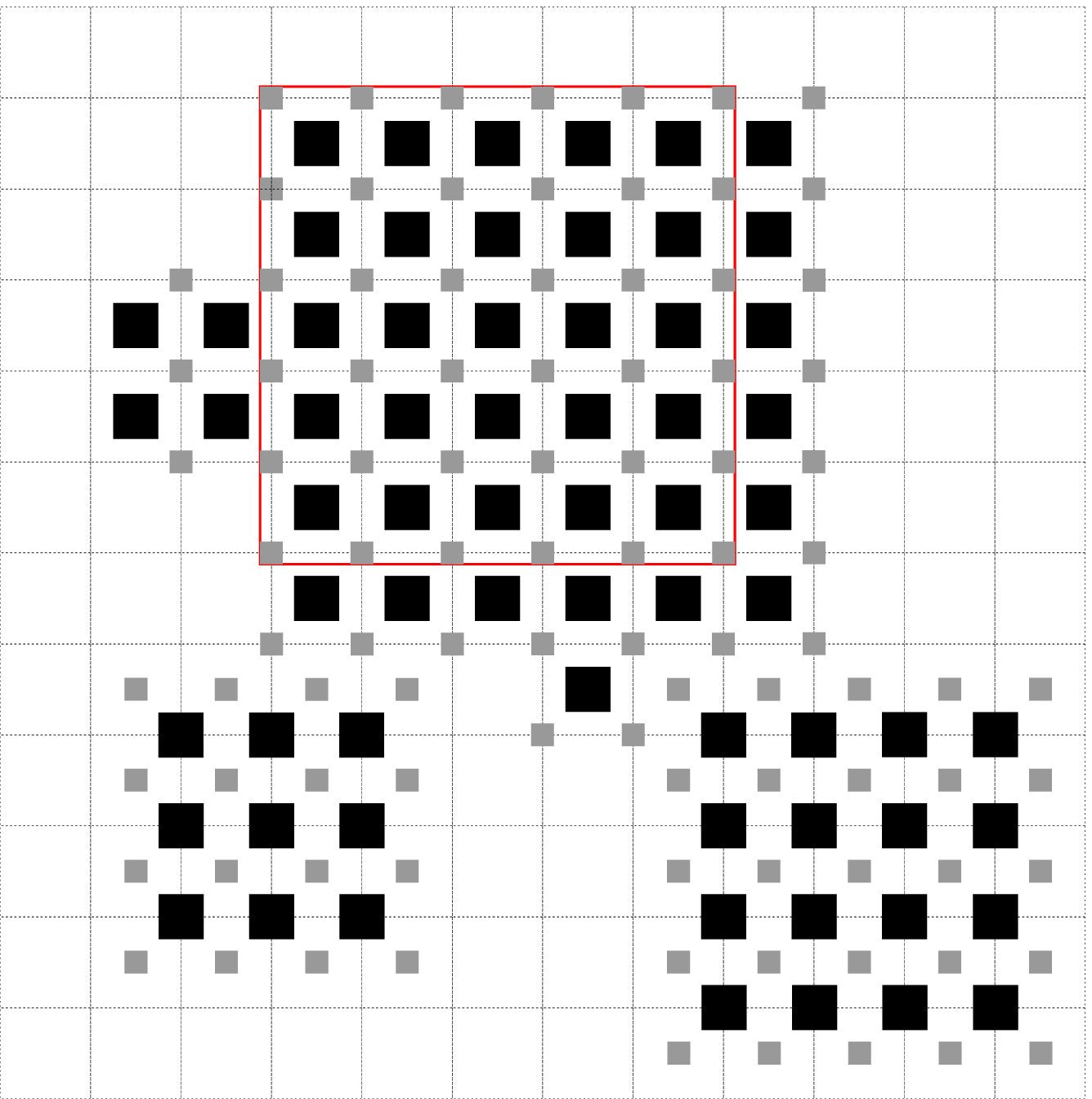}}
\\
\subfigure[\label{maxexp4}]
{\includegraphics[width=0.25\textwidth]{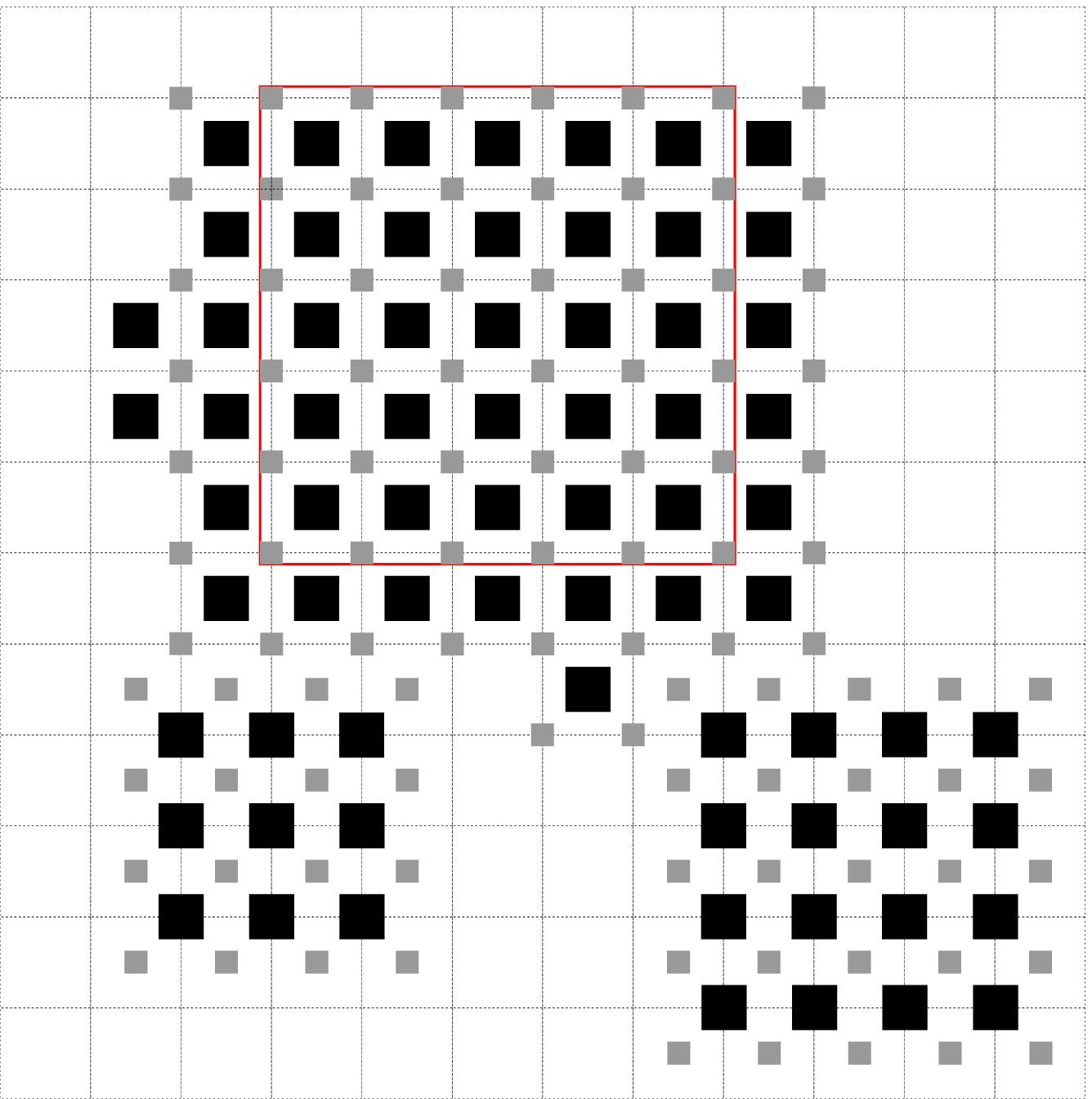}}
\qquad
\subfigure[\label{maxexp5}]
{\includegraphics[width=0.25\textwidth]{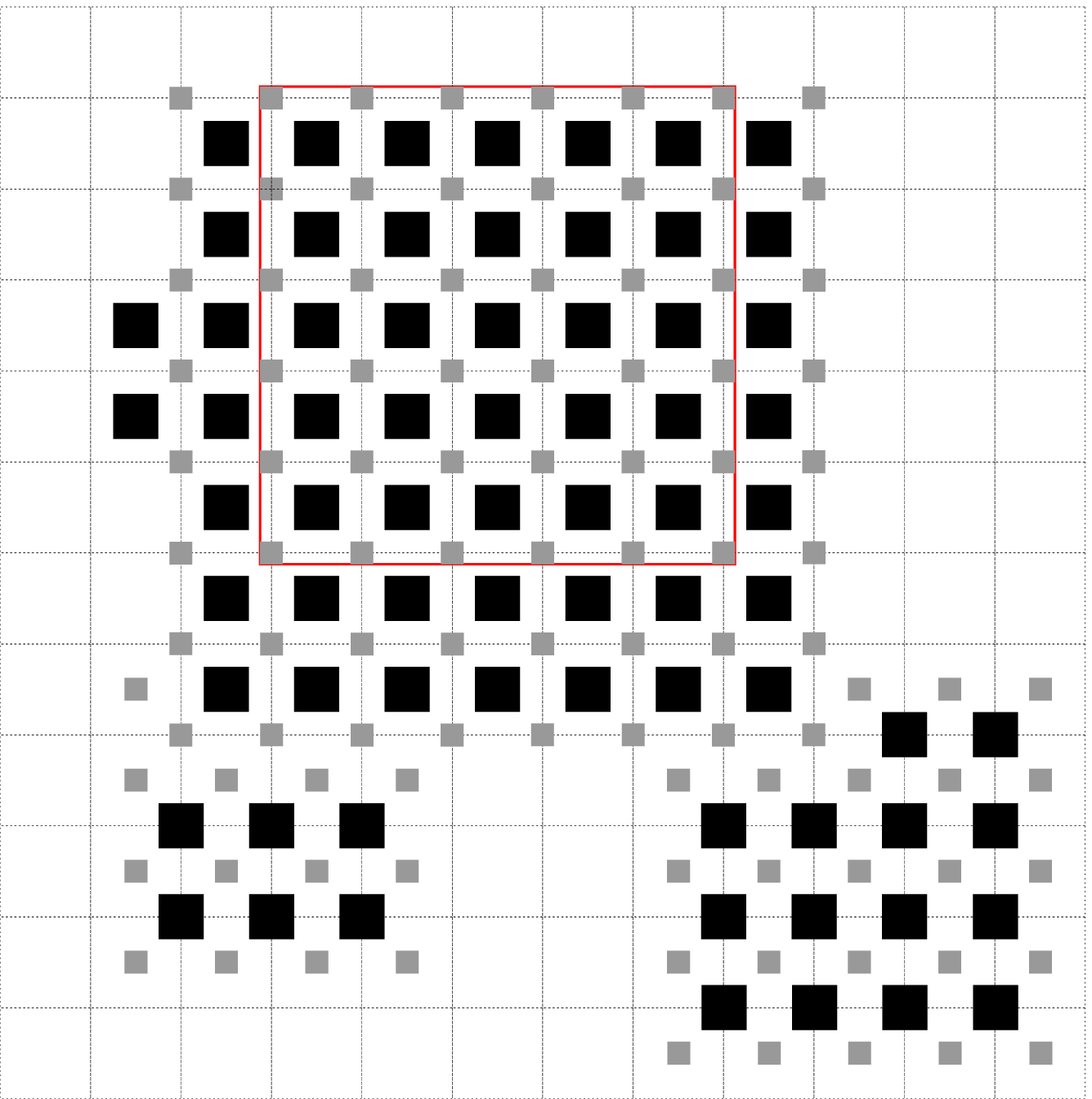}}
\qquad
\subfigure[\label{maxexp6}]
{\includegraphics[width=0.25\textwidth]{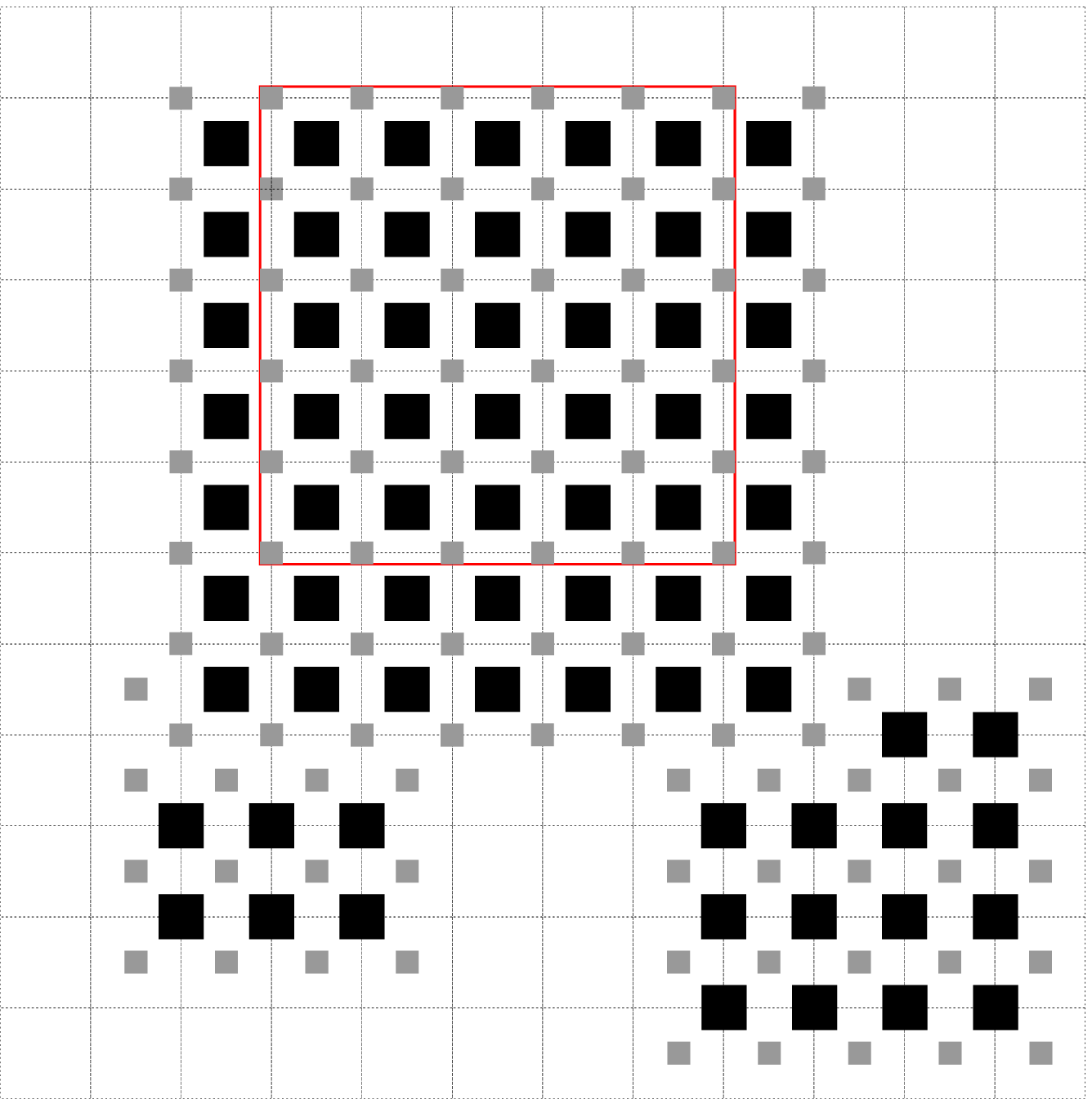}}
\caption{Example of north maximal expansion of a $\btiled$ rectangle. 
The outcome of the steps of the
growing phase are represented in pictures~(b--e),
while the outcome of the smoothing phase is represented in picture (f).}
\label{fig:expansion}
\end{figure}

Given a $\btiled$ rectangle $r$, let $\nmexpansion{r}$ denote the north maximal expansion 
of $r$. Corollary~\ref{corollary optimality of btiled pillared beams} implies the
following.

\begin{lemma}
\label{lemma maximal expansion of a rectangle lowers energy}
Let $\eta$ be a configuration containing a $\btiled$ rectangle. Then the configuration $\eta\prm$ 
obtained from $\eta$ via (north) maximal expansion of this $\btiled$ rectangle satisfies 
then $H (\eta\prm) \le H (\eta)$.
\end{lemma}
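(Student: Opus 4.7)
The plan is to treat the two phases of the north maximal expansion separately and show that each of them is energy non-increasing.

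First I would analyse the growing phase. Each of the three cyclically repeated steps takes a pillared beam $\tilde b$ on one side of the current $\btiled$ rectangle and replaces $\supp(\tilde b)$ with its $\btiled$ version (a $\abbar$ with possibly additional $\btiles$ beyond the old rectangle). By Corollary~\ref{corollary optimality of btiled pillared beams}, the energy does not increase under this replacement. Since the growing rectangle is contained in the finite box $\Lambdaminus$, the growing phase terminates after finitely many steps, and by induction the energy at the end of the growing phase is at most $H(\eta)$.

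Next I would analyse the smoothing phase. Let $r$ denote the $\btiled$ rectangle produced by the growing phase; I need to show that every particle of type $\tb$ which is adjacent to a side of $r$ and which is about to be removed has at most two active bonds. The key observation is the termination criterion of the growing phase: if such a $\tb$ particle had three or four neighbours occupied by particles of type $\ta$, the corresponding row (or column) of $\ta$ particles, together with that $\tb$ particle as a pillar, would constitute a pillared beam on one of the sides of $r$. The growing phase would therefore not have terminated, a contradiction. Hence every $\tb$ particle removed in the smoothing phase has at most two active bonds, and removing it changes the energy by at most $-\Db + 2U < 0$ because the constraints $\Db - \Da > 2U$ and $\Da > 0$ from \eqref{subpropmetreg} yield $\Db > 2U$. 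Consequently, the smoothing phase strictly decreases the energy at each removal.

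Combining the two phases gives $H(\eta\prm) \le H(\eta)$, which is the claim. The only subtle point, and the step I would verify carefully, is the characterisation of the particles of type $\tb$ that are removed during the smoothing phase, i.e.\ the argument that termination of the growing phase forces these particles to have at most two active bonds; once this is in place, the lemma is a direct consequence of Corollary~\ref{corollary optimality of btiled pillared beams} and the energetic bound $\Db > 2U$.
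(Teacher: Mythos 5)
Your proposal is correct and follows essentially the same route as the paper: the growing phase is handled by Corollary~\ref{corollary optimality of btiled pillared beams}, and the smoothing phase by the observation (which the paper actually builds into the definition of the smoothing phase) that each removed particle of type $\tb$ has at most two active bonds, so its removal costs $-\Db+2U<0$. One small terminological slip: a pillar is by definition a particle of type $\ta$, so the pillared beam produced by a third $\ta$-neighbour of such a $\tb$ particle has that $\ta$ particle (lying at dual distance $1$ from the beam) as its pillar, not the $\tb$ particle itself; the substance of your termination argument is otherwise exactly the paper's.
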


Lemma~\ref{lemma maximal expansion of a rectangle lowers energy} leads us to the following.

\begin{proposition}
\label{prop4}
\label{lemma maximal expansion of a rectangle can be achieved}
Let $\eta$ be a configuration containing a $\btiled$ rectangle $r$ whose north-side 
is lattice-connecting. Then the configuration $\eta\prm$ obtained from $\eta$ after 
replacing $r$ by $\nmexpansion{r}$ satisfies $H(\eta\prm)\leq H(\eta)$ and 
$\comlev(\eta,\eta\prm) \leq  H(\eta) + 10U - \Da$.
\end{proposition}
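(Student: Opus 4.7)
The first inequality $H(\eta') \leq H(\eta)$ is the content of Lemma~\ref{lemma maximal expansion of a rectangle lowers energy}, so the task reduces to exhibiting a path $\omega\colon \eta\to\eta'$ with $\max_{\xi\in\omega} H(\xi) \leq H(\eta) + 10U - \Da$. My plan is to realize the defining algorithm of $\nmexpansion{r}$ as an explicit sequence of allowed moves. Concretely, I would take $\omega$ to be the concatenation of sub-paths $\omega_1,\omega_2,\dots$, one for each successful invocation of the three growing-phase steps, followed by a sub-path $\omega_{\mathrm{smooth}}$ realizing the smoothing phase. Between two consecutive sub-paths the current configuration has energy $\le H(\eta)$: this follows from Corollary~\ref{corollary optimality of btiled pillared beams} applied at the end of each completed step. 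Hence it suffices to show that each sub-path has barrier at most $10U-\Da$ above the energy of its starting configuration.

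For each growing-phase sub-path, the south-, east-, or west-pillared beam $\tilde b$ that triggered the step has to be turned into a $\abbar$. I would adapt the two-case analysis at the end of the proof of Proposition~\ref{prop3}: starting from a basement site adjacent to a pillar and proceeding outwards, I would fill the basement of $\tilde b$ with particles of type $\ta$, each brought in from $\partial^-\Lambda$ via a lattice path that exists because the north-side of $r$ remains lattice-connecting throughout the growing phase (the phase never populates sites north of the original rectangle) and can be continued by a detour along the outside of the rectangle to reach any target site on the south/east/west face. Once the basement is complete, I would fill the center row with particles of type $\tb$, each of which activates three or four bonds on arrival, so its insertion has barrier $\Db < 4U - \Da$. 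The cost of each basement-filling step is bounded by the same $4U+\Da$ figure that appears in Proposition~\ref{prop3}, which is strictly less than $10U - \Da$ because $\Da < U$.

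The smoothing sub-path $\omega_{\mathrm{smooth}}$ removes the particles of type $\tb$ that remain adjacent to the sides of the grown rectangle. By the termination condition of the growing phase, each such particle has at most two active bonds (else a further pillared beam would have been detected), so each removal costs at most $2U$ to detach, is followed by motion along empty sites to $\partial^-\Lambda$ at non-positive energy cost, and is completed by annihilation with net gain $\Db > 2U$. All barriers in $\omega_{\mathrm{smooth}}$ are far below $10U - \Da$, and each removal strictly lowers the energy, so the running configuration stays below $H(\eta)$ throughout the smoothing phase.

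The hard part will be the energy bookkeeping for a single growing-phase step when the pillars of $\tilde b$ do not sit at the extremal positions of the basement, in contrast to the bridge of Proposition~\ref{prop3}. I would split the basement into segments delimited by two consecutive pillars (or by a pillar and a beam endpoint), first anchor the two endpoints of each segment by a single particle of type $\ta$, and then treat each segment as a reduced bridge problem to which Proposition~\ref{prop3} applies directly. The margin between $4U+\Da$ and $10U-\Da$ is what absorbs the cost of auxiliary detaching and rerouting moves needed when existing particles obstruct the preferred route, and verifying that this uniform $10U - \Da$ budget is always sufficient -- over all iterations of the three-step cycle, all configurations $r$, and all possible obstructions outside $r$ -- is the most delicate part of the argument.
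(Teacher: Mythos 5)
Your top-level decomposition (growing-phase sub-paths followed by a smoothing sub-path, with Corollary~\ref{corollary optimality of btiled pillared beams} guaranteeing that the energy returns to at most $H(\eta)$ after each completed step, and Lemma~\ref{lemma maximal expansion of a rectangle lowers energy} giving $H(\eta\prm)\le H(\eta)$) matches the paper. The gap is in how you realize each sub-path. You bring every needed particle of type $\ta$ in from $\partial^-\Lambda$ ``via a lattice path \ldots{} continued by a detour along the outside of the rectangle'', and in the smoothing phase you move detached particles ``along empty sites to $\partial^-\Lambda$''. But the hypothesis only guarantees that the \emph{north}-side of $r$ is lattice-connecting; the south-, east- and west-faces being expanded may be obstructed by other clusters or by $\partial^-\Lambda$ itself, and no empty detour around the rectangle need exist. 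For the same reason you cannot reduce a basement segment to ``a reduced bridge problem to which Proposition~\ref{prop3} applies directly'': that proposition explicitly requires the particles of the beam to be lattice-connecting. This is not a technicality one can wave away with the margin between $4U+\Da$ and $10U-\Da$: item~4 of Section~\ref{sec discussion} states that a $4U+\Da$ barrier suffices precisely for droplets with free space around them, and that the entire difficulty of Theorem~\ref{theorem recurrence} is handling configurations where that space is absent.

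The paper's proof avoids any assumption about the accessibility of the expanding face by routing all needed matter \emph{through the interior} of the $\btiled$ rectangle: it creates a superficial vacancy on the lattice-connecting north side (cost up to $4U-\Da$), transports it to the target tile with the slot mechanism of Lemma~\ref{lemma on motion of particles inside a cluster} (barrier $6U$ per transport), and then performs local exchanges, in an eight-case analysis over the possible contents of the two undetermined sites of the target tile. The resulting bound $10U-\Da$ is essentially attained rather than being slack: the worst cases yield $6U+\Db$, $12U-\Db$ and $10U-\Da$, whose maximum equals $10U-\Da$ exactly under \eqref{subpropmetreg}. As written, your argument establishes a $4U+\Da$ bound only under an additional (unstated and generally false) accessibility assumption on the south/east/west faces, and therefore does not prove the proposition.
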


\begin{proof}
If $\nmexpansion{r} = r$, then there is nothing to prove. Therefore suppose that $r$ is 
such that one its sides is a pillared beam. W.l.o.g.\ we may assume that the south-side 
of $r$ is a beam $\tilde{b}$ with a south-pillar. We must show that the south-support 
of $\tilde{b}$ can be turned into a $\abbar$ within energy barrier $10U - \Da$.	
	
Since $\supp(\tilde{b})$ is not a $\abbar$, a pillar can be chosen in such a way that 
at least one of the $\btiles$ of the support the pillar belongs to (i.e., the first 
tile of each section of the support, counting from the pillar) is not a $\btile$. 
W.l.o.g.\ we let this tile be the first tile of the right-section and call it $t$. 
Let $v$ denote the tile adjacent to the right site of $v$. In the following, the term 
\emph{superficial} refers to tiles that are in the top tile-bar of the rectangle. 
In analogy with the proof of Lemma~\ref{lemma btiles are optimal tiles}, several cases 
need to be considered (we stick to the order in Fig.~\ref{fig-possible_quasitiles}). 

\begin{enumerate}[(i)]	
\item	
$(\eta(t_{c}),\eta(t_{e})) = (0,0)$.
A particle of type $\tb$ has to be brought to site $t_{e}$ and a particle of type 
$\ta$ to site $t_{c}$. First bring a particle of type $\tb$ to site $t_{e}$, to 
reach a configuration $\hat{\eta}$, and then proceed as in Case~(\ref{rectangle
expansion - type one to bottom right}). As we will see in Case~(\ref{rectangle expansion
- type one to bottom right}), since $H(\hat{\eta}) = H(\eta) -3U + \Db$, the 
second part of the path can be completed without exceeding energy level $H(\eta)
+ 6U + \Db $. To reach configuration $\hat{\eta}$, move the particle of type $\tb$ 
of the $\btile$ above $t$ to site $t_{e}$ to reach a configuration called $\eta\prm$. 
This can be done without exceeding energy level $H(\eta) + 6U$. Note that $H(\eta\prm)
= H(\eta) + U$. The unit hole that has been created at the central site of the tile 
above $t$ has to be filled. This can be done (see Lemma~\ref{lemma on motion of particles
inside a cluster}) by first moving the unit hole until it becomes superficial (configuration 
$\tilde{\eta}$ with energy $H(\tilde{\eta}) = H(\eta\prm)$) without exceeding energy level 
$H(\eta\prm) + 6U$, and then filling this unit hole with a particle of type $\tb$ within 
energy level $H(\eta\prm) + U - \Da + \Db = H(\eta) + 2U - \Da + \Db$. Thus, $\eta\prm$
can be reached without exceeding energy barrier $6U+\Db$.
\label{rectangle expansion - two particles to bring}

\item
$(\eta(t_{c}),\eta(t_{e})) = (0,2)$.
A particle of type $\ta$ has to be brought to site $t_{c}$. Depending on the state 
of site $v_{e}$, there are three cases.

\begin{enumerate}
\item 
Site $v_{e}$ is occupied by a particle of type $2$. Move the particle of type $\ta$ 
at site $t_{b}$ to site $t_{c}$, to reach a configuration $\eta\prm$ with energy
$H(\eta\prm)\le H(\eta) + 2U$ within an energy barrier of $6U$. The vacancy at site 
$t_{b}$ can be moved (again by Lemma~\ref{lemma on motion of particles inside a cluster})
to the north-side of the rectangle within energy barrier $6U$, to reach a configuration 
$\hat{\eta}$ with $H(\hat{\eta}) \le H(\eta)$, and then filled with an extra particle of 
type $\ta$. Thus, $\eta\prm$
can be reached without exceeding energy level $H(\eta) + 8U$.
\item 
Site $v_{e}$ is empty. Move the particle of type $\ta$ at site $t_{b}$ to site $v_{e}$
($\D H \le 3U$), and then to site $t_{d}$ ($\D H = 0$). Call this configuration
$\eta\prm$, and note that $H(\eta\prm) \leq H(\eta) + 2U$. Arguing as above, we
see that the vacancy at site $t_{b}$ can be filled without exceeding the energy level
$H(\eta) + 9U$.
\item 
Site $v_{e}$ is occupied by a particle of type $\ta$. Observe that the particle of 
type $\ta$ at $t_{b}$ has $k \le 3$ active bonds and the particle of type $\tb$ at $v_{e}$ 
has $m \le 2$ active bonds. It is possible to move the particle at site $v_{e}$ to site 
$t_{c}$ ($\D H = (m-k)U$), and then the particle at site $t_{b}$ to site $v_{c}$ 
($\D H = (k-m)U$). The configuration $\eta\prm$, reached within energy barrier $(k - m)U$, 
has energy $H(\eta\prm) \le  H(\eta) + k U$. Again, the vacancy at site $t_{b}$
has to be filled with a particle of type $\ta$. This can be done without exceeding 
the energy level $H(\eta) + (6 + k)U \le H(\eta) + 9U$.
\end{enumerate}
\label{rectangle expansion - type one to bottom right}

\item 
$(\eta(t_{c}),\eta(t_{e})) = (2,0)$.
The particle of type $\tb$ at site $t_{c}$ is moved to site $t_{e}$ without increasing 
the energy. Then argue as in Case~(\ref{rectangle expansion - type one to bottom right}).

\item 
$(\eta(t_{c}),\eta(t_{e})) = (2,2)$.
The particle of type $\tb$ at site $t_{c}$ has to be replaced by a particle of type $\ta$.
Remove the particle of type $\tb$ at $t_{e}$. To do this, first create a superficial 
unit hole (which can be done within energy barrier $4U - \Da$ by creating a hole in a 
corner tile of the rectangle) and move this vacancy to site $t_{e}$. By 
Lemma~\ref{lemma on motion of particles inside a cluster}, this can be achieved without 
exceeding energy level $H(\eta_{0}) + 10U - \Db$. Then move the particle of type $\tb$ 
at site $t_{c}$ to site $t_{e}$ ($\D H \le 0$). Call $\eta\prm$ the configuration that is 
reached in this way. Note that $H(\eta\prm) \le H(\eta)- \Db + 3U$. To bring a particle 
of type $\ta$ to site $t_{c}$, argue as in Case~(\ref{rectangle expansion - type one to
bottom right}), to arrive at $H(\hat{\eta}) \leq H(\eta) + 12U- \Db$.
\label{rectangle expansion - two becomes one}
	
\item	
$(\eta(t_{c}),\eta(t_{e})) = (1,0)$.
A particle of type $\tb$ has to be brought to site $t_{e}$. Move the unit hole at $t_{e}$ 
to the top tile--bar of the rectangle. This does not change the energy of the configuration
and can be done within energy barrier $6U$ by Proposition~\ref{prop1}. The task reduces to 
filling a superficial unit hole on the surface of the cluster with a particle of type $\tb$. 
This can be achieved within energy barrier $U + \Db - \Da$. Therefore the maximal energy level 
reached in this case is $H(\eta) + 6U$.
\label{rectangle expansion - unit hole}

\item 
$(\eta(t_{c}),\eta(t_{e})) = (0,1)$.
Move the particle of type $\tb$ from site $t_{e}$ to site $t_{c}$. This move does not increase
the energy of the configuration. Then proceed as in Case~(\ref{rectangle expansion - unit hole}).

\item 
$(\eta(t_{c}),\eta(t_{e})) = (2,1)$. 
The occupation numbers of sites $t_{c}$ and $t_{e}$ have to be exchanged. To do this, 
first remove the particle of type $\ta$ at site $t_{b}$ to obtain a configuration
$\eta\prm$ with energy $H(\eta\prm) \le H(\eta) + 3U$ without exceeding the energy 
level $H(\eta)+ 10U - \Da$ (again use Lemma~\ref{lemma on motion of particles inside a cluster}).
Move the particle of type $\ta$ from $t_{e}$ to $t_{b}$ ($\D H<0$) and the particle 
of type $\tb$ from $t_{c}$ to $t_{e}$ ($\D H=0$). Call $\hat{\eta}$ the configuration 
that is reached in this way. Note that $H(\hat{\eta}) \le H(\eta) + U - \Da$. Proceed as 
in Case~(\ref{rectangle expansion - type one to bottom right}) to conclude within
energy barrier of $10U - \Da$.

\item 
$(\eta(t_{c}),\eta(t_{e})) = (1,1)$. 
The particle of type $\ta$ at site $t_{e}$ has to be replaced by a particle of type $\tb$.
This can be done as follows. First the particle of type $\ta$ sitting a site $t_{b}$
is removed. To achieve this, first remove a particle of type $\ta$ at the north-side of 
the rectangle and then (use Lemma~\ref{lemma on motion of particles inside a cluster}) 
move the vacancy to site $t_{b}$. The configuration that is reached, which we call 
$\eta\prm$, is such that $H(\eta\prm) \le H(\eta) + 3U - \Da$. Next, move the particle 
of type $\ta$ at $t_{e}$ to site $t_{b}$ ($\D H=0$), to reach a configuration $\hat{\eta}$ 
whose energy is $H(\hat{\eta}) = H(\eta)-\Da$. Finally, argue as in Case~(\ref{rectangle
expansion - unit hole}), to arrive at $H(\hat{\eta}) \leq H(\eta) + 3U- \Da$.
	
\end{enumerate}

\noindent
Finally, note that (\ref{subpropmetreg}) implies $\max\{6U+\Db,10U-\Da,12U-\Db\} = 10U - \Da$.

By Lemma~\ref{lemma btiles are optimal tiles}, $H(\eta\prm) \le  H(\eta)$, and
therefore the same argument can be used to show that all the right-sections of the 
support can be $\btiled$ within the same energy barrier. The left-section can be 
$\btiled$ analogously.
	
To conclude, it remains to be shown how particles of type $\tb$, possibly adjacent to 
one side of the rectangle, can be removed from $\Lambda$. Call $t$ the tile associated 
with the particle $p$ of type $\tb$ that has to be removed ($p$ sits at site $t_{e}$)
and $v$ the tile adjacent to $t$ belonging to the rectangle. First bring a vacancy to 
site $v_{e}$ within energy barrier $10U - \Db$ (one way to achieve this has been described 
in Case~(\ref{rectangle expansion - two becomes one}) above) and then move $p$ to 
site $v_{e}$ (see Lemma~\ref{lemma on motion of particles inside a cluster}).
\end{proof}


\subsubsection{Merging adjacent $\btiled$ rectangles}
\label{sec sliding adjacent btiled rectangles}

\begin{definition}
A $\abbar$ $b_{1}$ of length $\ell$ of a cluster $c_{1}$ is said to be adjacent to a 
$\abbar$ $b_{2}$ of length $m \le \ell$ of a cluster $c_{2}$ if there exist $m$ mutually 
disjoint pairs $(q_{1}^{i}, q_{2}^{i})$ of particles of type $\ta$ with $q_{1}^{i} 
\in b_{1}$ and $q_{2}^{i} \in b_{2}$ such that $u(q_{1}^{i}) - u(q_{2}^{i})=v$ with 
$\|v\| = \frac12\sqrt{2}$ for $i=1,\ldots,m$. The vector $v$ is called the offset of 
$b_{2}$ with respect to $b_{1}$. The tiles in $b_{1}$ have a different parity than the 
tiles in $b_{2}$. The particles $q_{1}^{i} \in b_{1}$, $i = 1,\ldots m$, are called 
the external particles of $b_{1}$ with respect to $b_{2}$, and the particles $q_{2}^{i} 
\in b_{2}$, $i=1,\ldots,m$, are called the external particles of $b_{2}$ with respect 
to $b_{1}$.
\end{definition}

\begin{proposition}
\label{prop5}
Let $\eta$ be a configuration that contains two adjacent $\btiled$ rectangles. Then 
the configuration $\eta\prm$ obtained by ``merging'' these two rectangles satisfies 
$H(\eta\prm)=H(\eta)$ and $\comlev(\eta,\eta\prm) \leq H(\eta)+2U-\Da$.
\end{proposition}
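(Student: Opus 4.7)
The plan is to interpret ``merging'' as a rigid translation of $R_{2}$, verify that this translation preserves the energy, and realize it as a sequence of allowed moves with maximum energy $H(\eta)+2U-\Da$. By the adjacency hypothesis, the offset vector $v$ has standard norm $1$ and the two rectangles have opposite parities. Among the four standard unit vectors ($\pm v$ and the two vectors perpendicular to $v$), at least one choice $w$ translates $R_{2}$ to a position that has the same parity as $R_{1}$, is disjoint from $R_{1}$, and has no particle standard-adjacent to a particle of $R_{1}$ of the opposite type; such a $w$ exists because $R_{2}$ is finite and embedded with sufficient free space in $\Lambdaminus$. Defining $\eta\prm$ as the rigid translation of $R_{2}$ by this $w$, the particle counts and the total number of active bonds are both preserved, so $H(\eta\prm)=H(\eta)$.

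To build the path, I would translate $R_{2}$ particle-by-particle, sweeping in the direction $w$. The key elementary operation is to move an interior type-$\ta$ particle of $R_{2}$ (which has exactly two active bonds) from its original position to its translated position via the reservoir: detach it (cost $2U$), annihilate it at $\partial^{-}\Lambda$ (cost $-\Da$), create a new $\ta$ at $\partial^{-}\Lambda$ (cost $+\Da$), and attach it at the translated position with two bonds (cost $-2U$). The maximum energy excursion of this elementary operation is $2U-\Da$, attained right after the detachment-and-annihilation; the operation then returns to the same energy level at which it started. Type-$\ta$ particles at the boundary of $R_{2}$ with only one bond are handled analogously with smaller barrier $U-\Da$. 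Type-$\tb$ particles are translated by a coordinated sequence of local swaps around them, in the spirit of Lemma~\ref{lemma on motion of particles inside a cluster}, so that each $\tb$ keeps at least two active bonds throughout its motion.

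The main obstacle I anticipate is controlling the $\tb$ displacements: a $\tb$ has four active bonds initially, and a naive hop would break three of them at a cost of $3U$, exceeding the barrier. The scheduling strategy is to pre-create fresh $\ta$ particles from the reservoir at the leading side of the $\tb$ before it hops, and to re-use the trailing $\ta$ particles only after the $\tb$ has migrated, so that at every intermediate step the moving $\tb$ retains at least two active bonds. Verifying that this scheduling indeed maintains the running maximum at $2U-\Da$ requires a finite case analysis on the direction $w$, on the length of $b_{2}$, and on the relative orientations of $R_{1}$ and $R_{2}$; once these sub-cases are established, concatenation of the elementary operations yields the desired path from $\eta$ to $\eta\prm$.
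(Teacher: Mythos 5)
Your proposal does not establish the proposition, for two reasons that are independent of each other. First, the target configuration is wrong: translating $R_{2}$ rigidly to a same-parity position that is \emph{disjoint} from $R_{1}$ and forms \emph{no} bonds with it does not merge the two rectangles into a single cluster, which is the whole point of the operation (the merged rectangle is subsequently fed into the maximal-expansion mechanism in the invasion algorithm). The paper's ``merge'' is a bar-by-bar \emph{sliding}: the particles of type $\tb$ of the bar of $R_{2}$ facing $R_{1}$ are shifted by the offset $v$ and its external particles of type $\ta$ by the reflected vector $(v_{1},-v_{2})$ (resp.\ $(-v_{1},v_{2})$), so that the slid bar becomes a $\abbar$ of the same parity as $R_{1}$ bonded onto it; the total number of active bonds is conserved move by move, the interior of $R_{2}$ is never touched, and the surplus particles of type $\ta$ left over at the end are released and removed.

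Second, your elementary operations do not respect the barrier. An \emph{interior} particle of type $\ta$ in a $\btiled$ rectangle is saturated (four active bonds, not two), so detaching it costs $4U$; moreover it is not lattice-connecting, so it cannot be routed to $\partial^{-}\Lambda$ at all without first dismantling the cluster. Even for a genuinely two-bond particle your accounting is off: the peak of ``detach, annihilate, create, attach'' occurs immediately after the detaching hop, at $H(\eta)+2U$, before the annihilation brings you to $2U-\Da$, and $2U>2U-\Da$. Finally, the one step you correctly identify as the crux --- moving a saturated particle of type $\tb$ without paying $3U$ --- is exactly the step you leave to an unspecified case analysis. The paper's resolution needs no reservoir particles: each particle of type $\tb$ makes a single nearest-neighbor hop that is \emph{bond-neutral}, because the adjacent $\abbar$ of $R_{1}$ together with the type-$\ta$ particle moved just before it supply as many new bonds as are broken. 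The resulting excursion for sliding one bar is only $+U$ (reached at the very first $\ta$ move), and the stated bound $2U-\Da$ arises from the obstructed variant in which the first target site is already occupied by a particle of type $\ta$.
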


\begin{proof}
Given two adjacent bars $b_{1}$ and $b_{2}$ with offset $v=(v_{1},v_{2})$ in a 
configuration $\eta$, we want to define the sliding of $b_{2}$ onto $b_{1}$ along 
$v$. The resulting configuration $\eta\prm$ is such that all the particles of type 
$\tb$ originally in $b_{2}$ are slid by $(v_{1},v_{2})$ with respect to their 
position in $\eta$, and all the external particles of type $\ta$ of $b_{2}$ with 
respect to $b_{1}$ are slid by $(v_{1},- v_{2})$ when the two bars are horizontal 
and by$(-v_{1},v_{2})$ when the two bars are vertical. Via the sliding, the $m$ 
$\btiles$ in $b_{2}$ are turned into $m$ $\btiles$ with the same parity as the tiles 
in $b_{1}$. It is easy to see that $H(\eta\prm) = H(\eta)$, since neither the total 
number of active bonds of the configuration nor the number of particles of each type is 
changed. 

To describe the sliding of a bar onto another bar along a vector $v$, we may assume w.l.o.g.\ 
that the two bars are vertical and that the vector $v$ is equal to $(- \tfrac12,-\tfrac12)$
(Fig.~\ref{adjab0}).
Start by moving the lower-most external particle of type $\ta$ in $b_{2}$ over the vector 
$v\prm = (\tfrac12, -\tfrac12)$ (Fig.~\ref{adjab1}). This leads to an increase by $U$ in energy. Then move 
the lower-most particle of type $\tb$ over the vector $v$ (Fig.~\ref{adjab2}). Since the number of deactivated 
bonds is equal to the number of new bonds activated, this move does not change the energy. Proceed 
by moving over the vector $v\prm$ the second particle of type $\ta$ from the bottom of the bar
(Fig.~\ref{adjab3}). 
This also is a move that does not change the energy. Afterwards, the second particle of type 
$\tb$ from the top is moved over the vector $v$ (Fig.~\ref{adjab4}). This sequence of moves proceeds iteratively 
(without a change in energy) until the $m$-th particle of type $\tb$ has been moved over the 
vector $v$. Finally, the $(m+1)$-st external particle of type $\ta$ is moved over the vector 
$v\prm$ (Fig.~\ref{adjabf}). 
This move decreases the energy by $U$. Thus, $U$ is the energy barrier that must
be overcome in order to realize the sliding of a $\abbar$ onto another $\abbar$ over the 
vector $v$. 

\begin{figure}[htbp]
\centering
\subfigure[$\eta_{0}$ \label{adjab0}]
{\includegraphics[width=0.2\textwidth]{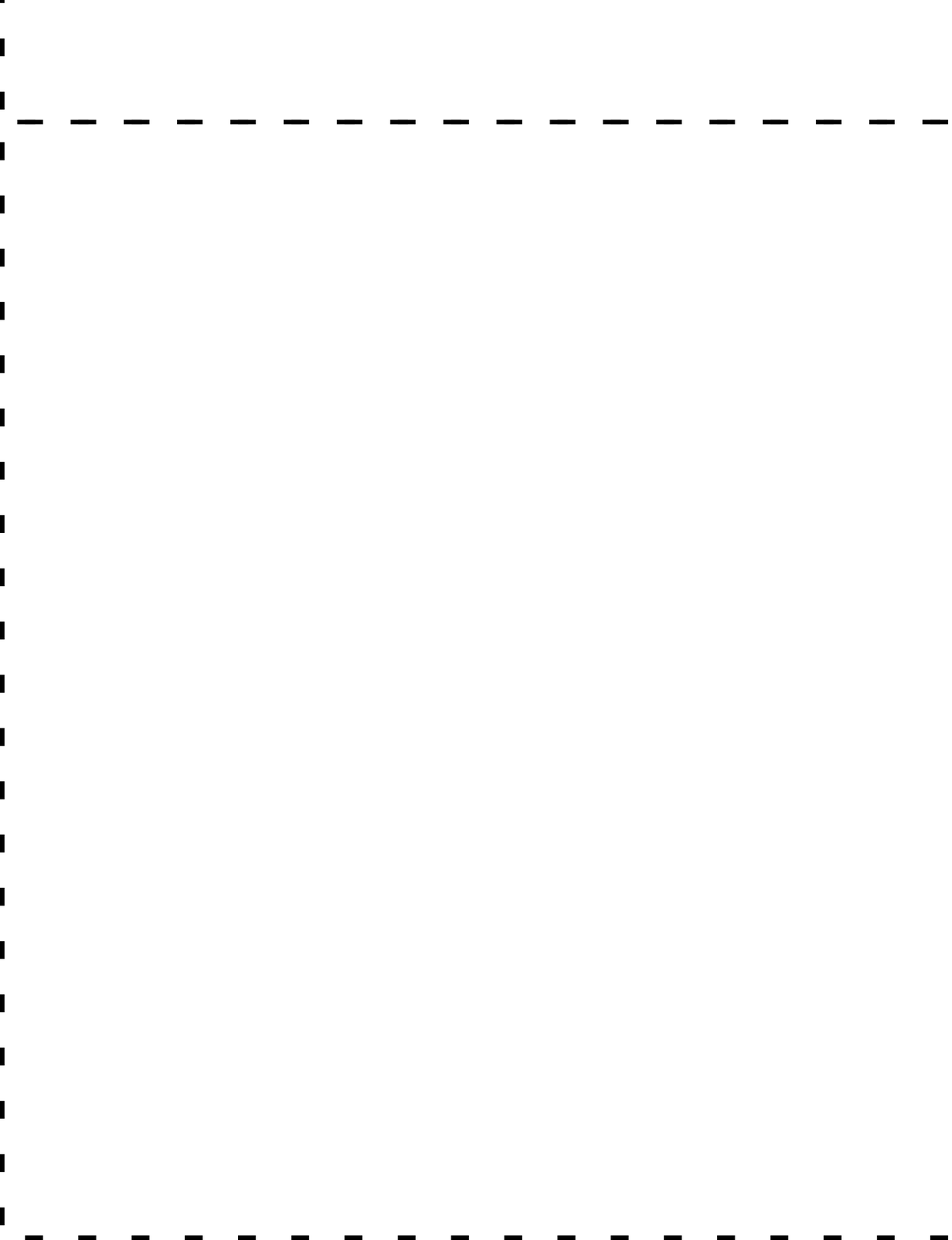}}
\qquad
\subfigure[\label{adjab1}]
{\includegraphics[width=0.2\textwidth]{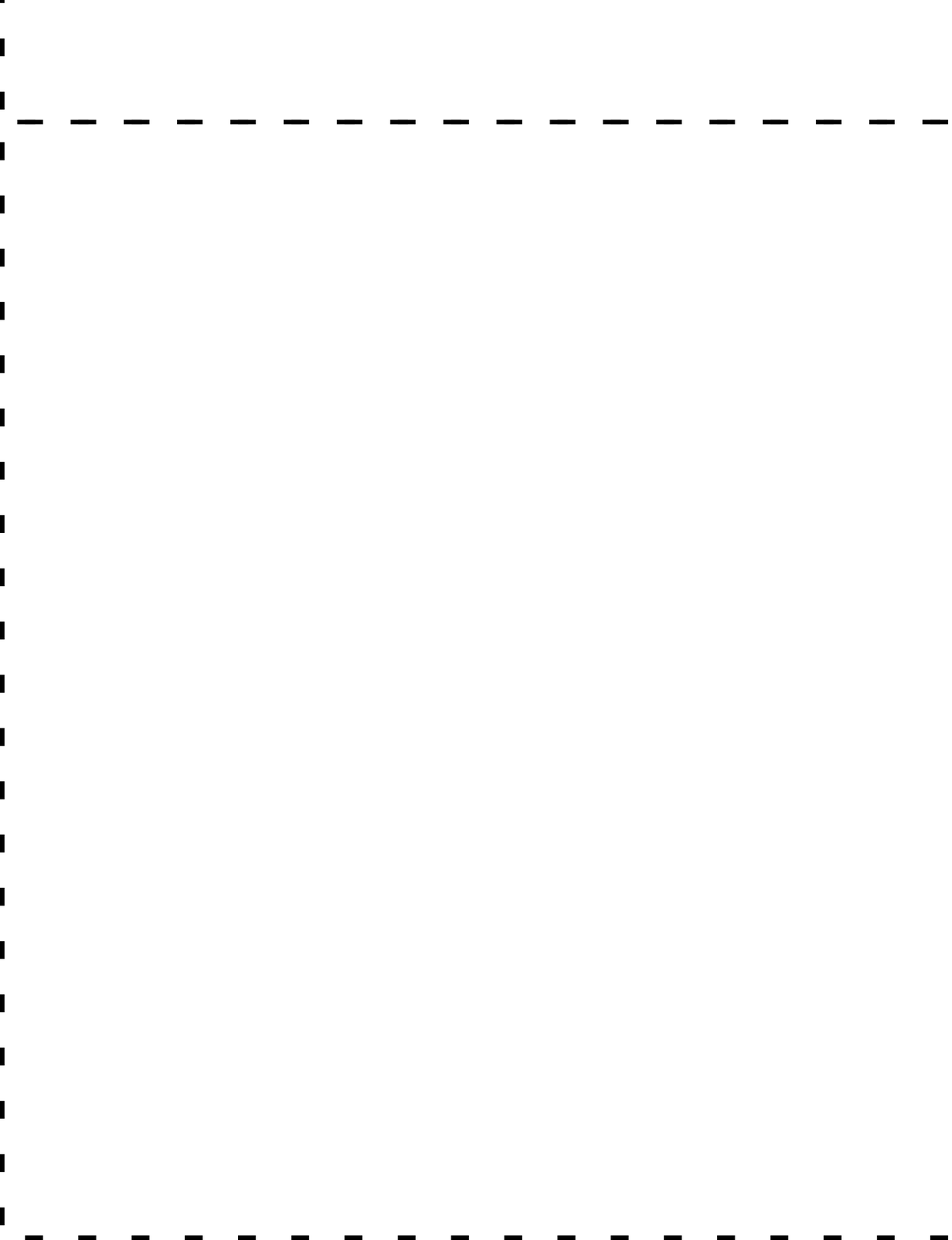}}
\qquad
\subfigure[\label{adjab2}]
{\includegraphics[width=0.2\textwidth]{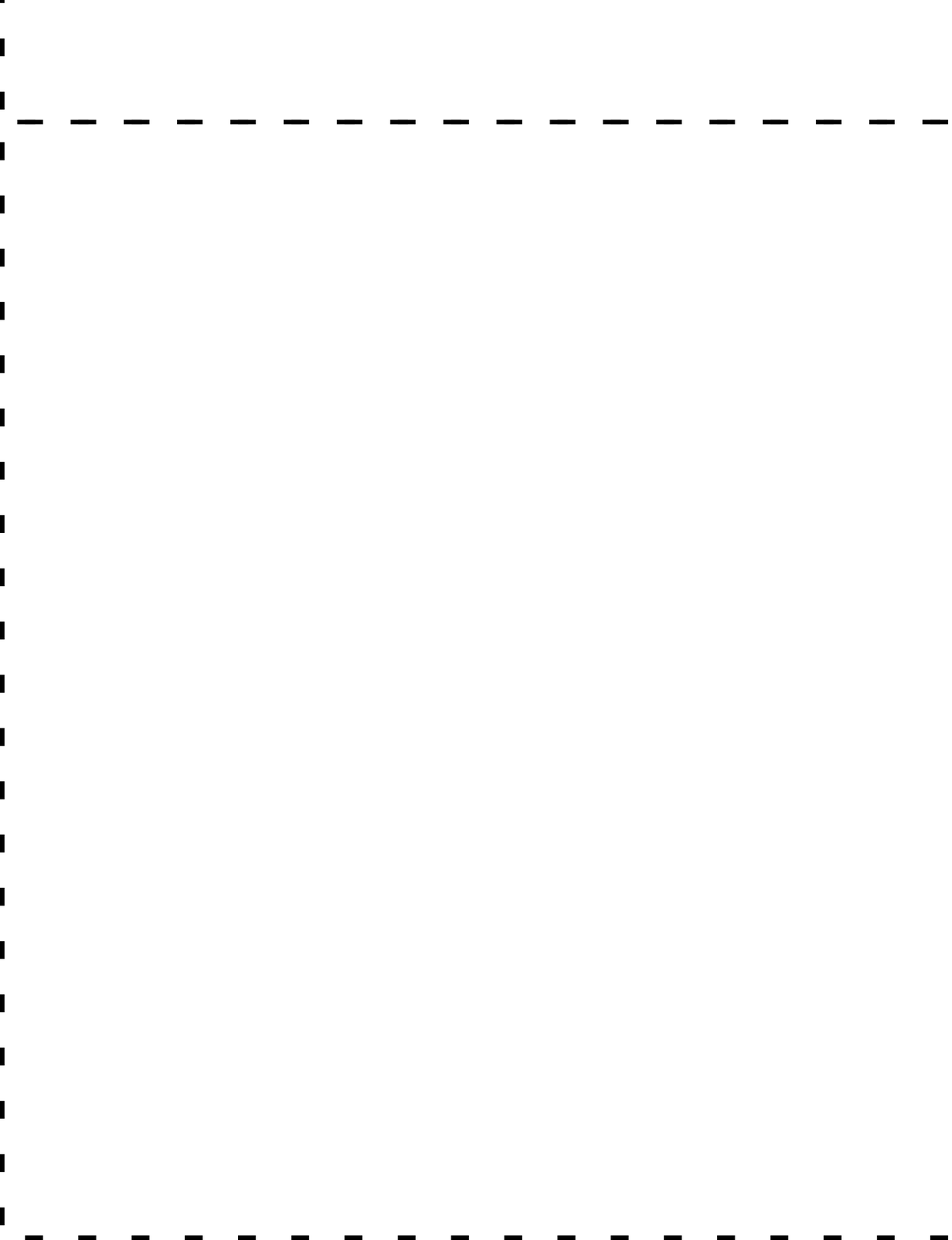}}
\qquad\qquad
\subfigure[\label{adjab3}]
{\includegraphics[width=0.2\textwidth]{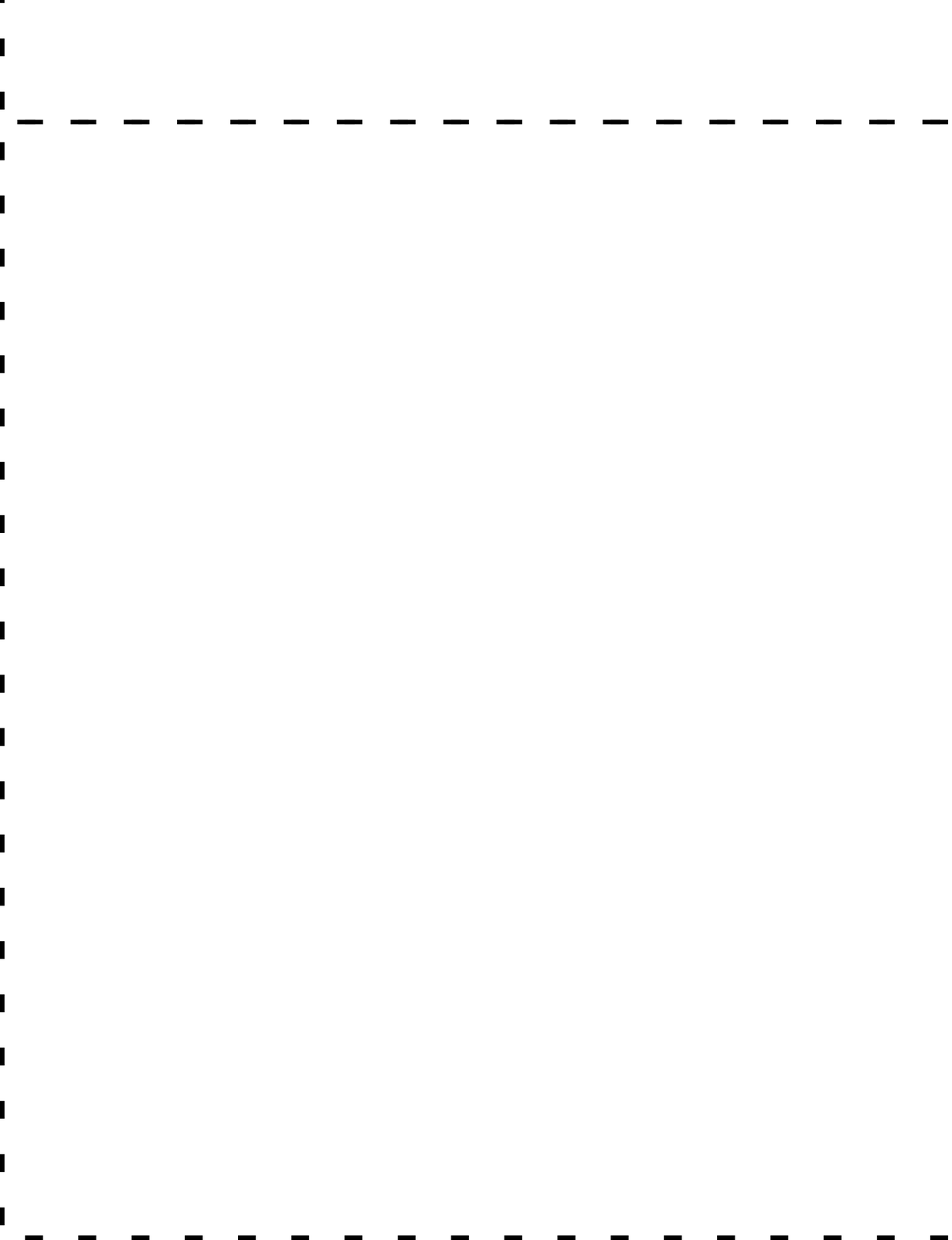}}
\qquad
\subfigure[\label{adjab4}]
{\includegraphics[width=0.2\textwidth]{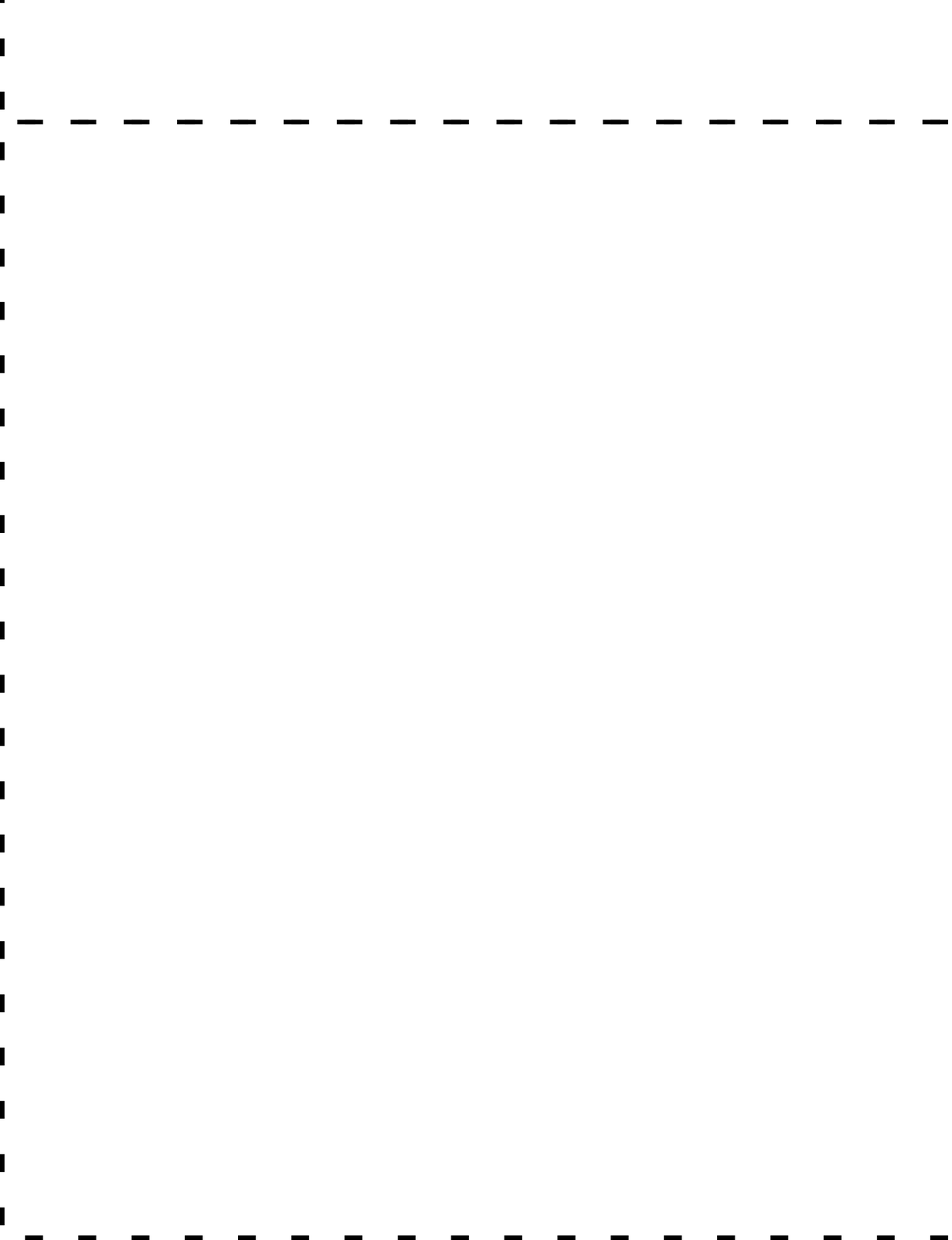}}
\qquad
\subfigure[$\eta_{1}$ \label{adjabf}]
{\includegraphics[width=0.2\textwidth]{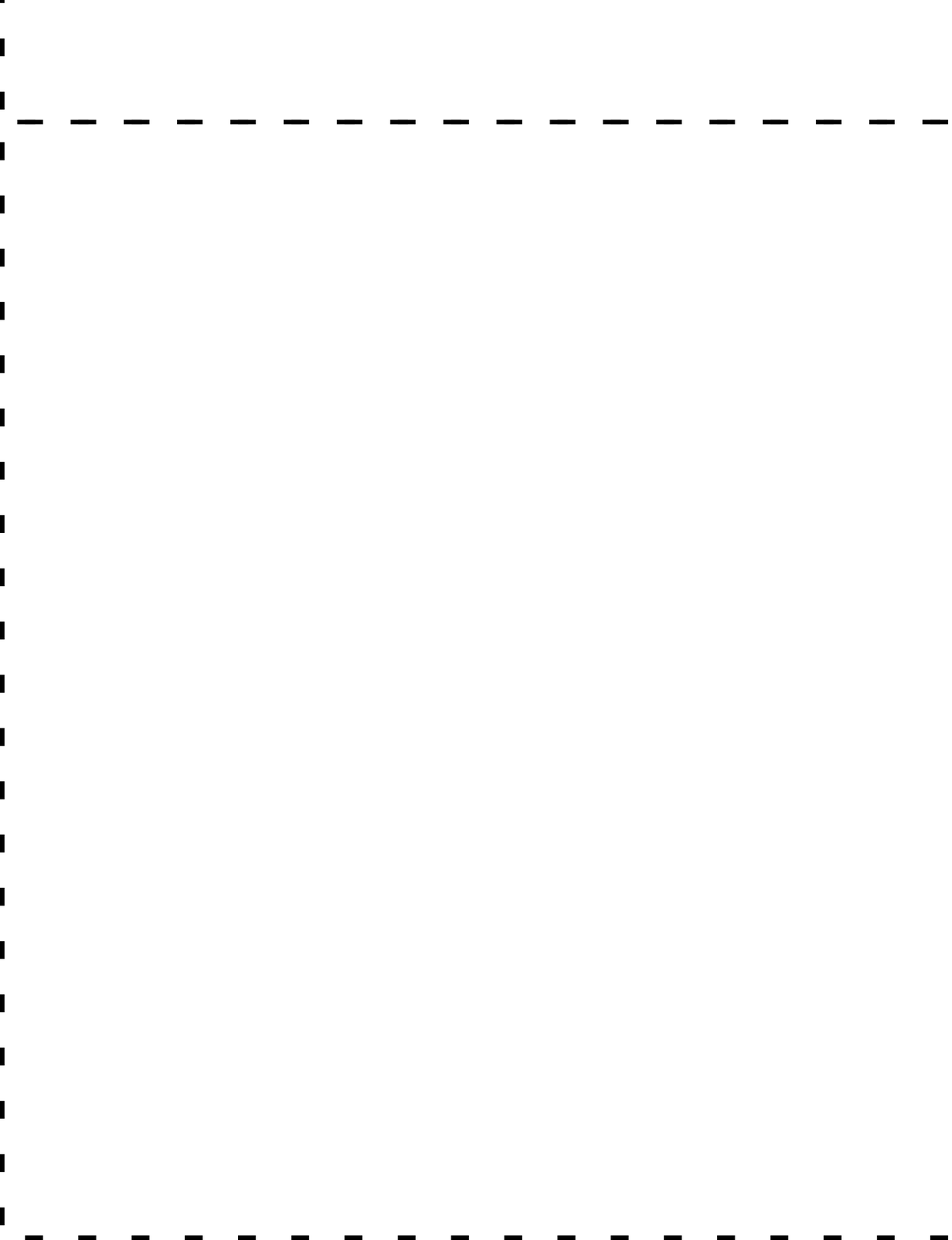}}
\caption{The sliding of $b_{2}$ onto $b_{1}$.}
\label{fig:adjacent abbars}
\end{figure}

It is clear that, given a configuration $\eta$ containing two $\btiled$ rectangles 
$c_{1}$ (with vertical side length $\ell$) and $c_{2}$ (with vertical side length
$m \le \ell$) with offset $v$, it is possible to reduce $\eta$ to a configuration 
$\eta\prm$ such that $c_{1}$ and $c_{2}$ are merged into of a single cluster by 
sliding one bar after another, without exceeding energy barrier $\Delta H = U$, 
provided the other clusters of $\eta$ do not interfere with this procedure. Sliding 
the last bar of $c_{2}$ we get an excess of free particles of type $\ta$, which can 
be removed from $\Lambda$, lowering the energy. In particular, the configuration 
$\eta\prm$ obtained via the sliding of $c_{2}$ onto $c_{1}$ along $v$ without
exceeding energy level $H(\eta) + U$ has energy $H(\eta\prm) = H(\eta)-(m+1)\Da$, 
since the two configurations consist of the same number of $\btiles$, and $\eta\prm$ 
contains $m + 1$ particles of type $\ta$ less than $\eta$. Moreover, $\comlev(\eta,
\eta\prm) = H(\eta) + U$.

In the argument above, the first move consisted of moving down-right a particle of 
type $\ta$ of $b_{2}$ to an empty site (say, site $i$). If in configuration $\eta$ 
site $i$ is occupied by a particle of type $\ta$, then the sliding of the vertical
$\abbar$ can be realized by modifying the procedure as follows. First remove from the 
box the top-left particle of type $\ta$ of $b_{2}$ sitting at site $j$ to reach a 
configuration with energy $H(\eta) + U - \Da$ (which can be done without exceeding 
energy level $H(\eta)+U$). Then move to $j$ the particle of type $\ta$ sitting at site 
$k=j+v=j+(-\frac{1}{2},-\frac{1}{2})$ in $\eta$, which increases the energy up 
to level $H(\eta) + 2U - \Da$. Then site $k$ is filled with the particle of type 
$\ta$ originally at site $k + (\frac{1}{2}, -\frac{1}{2})$ without an increase in 
energy. It is possible to continue in this way until the configuration obtained 
after the first step of the above case is reached. This configuration has energy 
$H(\eta) + U - \Da$. Then proceed as in the above case until $b_{2}$ is slid onto 
$b_{1}$. This leads to a configuration with energy $H(\eta)- \Da < H(\eta)$. In 
order to perform the (modified) sliding procedure, it is sufficient to assume that 
the north-side of rectangle $c_{2}$ is lattice-connecting.
\end{proof}


\subsubsection{Removing subcritical clusters}
\label{sec preparation procedure}

The \emph{cleaning mechanism} defined in this section produces a configuration
for which we have a certain control on the geometry of the constituent clusters.
In particular, these clusters will be suitable for the application of the previous five
energy reduction mechanisms.
We begin by looking at pending dimers (see Fig.~\ref{fig:pending dimer}).

\begin{figure}[htbp]
\centering
{\includegraphics[width=0.2\textwidth]{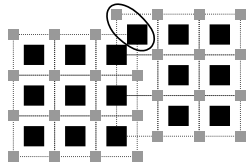}}
\caption{A pending dimer is the pair of particles circled in the picture.}
\label{fig:pending dimer}
\end{figure}

\begin{definition}
A pending dimer consists of two adjacent particles of different type such that 
the particle of type $\ta$ is lattice-connecting and has only one active bond
and the particle of type $\tb$ has at most three active bonds.
\end{definition}

\begin{proposition}
\label{prop6}
Let $\eta$ be a configuration containing pending dimers. Then there exists a 
configuration $\eta\prm$ not containing pending dimers that satisfies $H(\eta\prm)< H(\eta)$ 
and $\comlev(\eta, \eta\prm) \le H(\eta) + 3U + \Db$. 
\end{proposition}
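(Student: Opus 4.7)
My plan is to eliminate pending dimers one at a time through an energy-decreasing reduction and then iterate; since $\cX$ is finite and the energy is bounded below, this terminates at a dimer-free configuration $\eta\prm$. I will construct, for any pending dimer $(p_\ta, p_\tb)$ in a current configuration, a path $\omega$ that removes this dimer, strictly decreases the energy, and has peak energy at most $H(\eta) + 3U + \Db$. Concatenating such path-segments over the full iteration then produces the required path from $\eta$ to $\eta\prm$.

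For a single pending dimer let $k \in \{1,2,3\}$ denote the total number of active bonds of $p_\tb$. The core move is: (i) break the bond $(p_\ta, p_\tb)$ at cost $+U$; (ii) slide $p_\ta$ to $\partial^-\Lambda$ along its lattice-connecting empty corridor and annihilate it, at cost $-\Da$; (iii) detach the $k-1$ remaining bonds of $p_\tb$ at cost $+(k-1)U$; (iv) slide $p_\tb$ through the now-vacated corridor, using $p_\ta$'s former site as a stepping stone, and annihilate it at $\partial^-\Lambda$, at cost $-\Db$. The peak along this core path is $H(\eta) + kU$ and the net change is $kU - \Da - \Db$. For $k \le 2$, the constraint $\Db > 2U + \Da$ from \eqref{subpropmetreg} gives $kU - \Da - \Db < -2\Da < 0$, with peak $\le H(\eta) + 2U$, well within the $H(\eta)+3U+\Db$ budget. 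This handles the easy case.

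For $k = 3$ the net $3U - \Da - \Db$ may fail to be negative (it is negative iff $\Da + \Db > 3U$, which need not hold throughout \eqref{subpropmetreg}). In this hard case I use one of two compensating moves. Option (a): if the fourth neighbour of $p_\tb$ is empty and lattice-connecting, bring a fresh $\ta$-particle from $\partial^-\Lambda$ to that site, saturating $p_\tb$; this alone removes the pending-dimer property of every pair involving $p_\tb$, costs $\Da - U < 0$, and has peak $H(\eta) + \Da$. Option (b): otherwise, execute the core move (peak $H(\eta) + 3U - \Da$, endpoint $H(\eta) + 3U - \Da - \Db$) and then bring a fresh $\tb$-particle from $\partial^-\Lambda$ (cost $+\Db$, returning to energy $H(\eta) + 3U - \Da$) to a good dual corner of the remaining configuration, attaching it to form three new bonds at cost $-3U$, ending at $H(\eta) - \Da < H(\eta)$. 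In the hard regime we have $\Db \le 3U - \Da$, so the overall peak $H(\eta) + 3U - \Da$ is bounded by $H(\eta)+3U+\Db$.

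The main obstacle is to verify that at least one of (a), (b) is always available. This amounts to a local geometric case analysis around $p_\tb$: either its fourth neighbour is empty and accessible from $\partial^-\Lambda$ (option (a)), or the ambient configuration is rich enough that removing the dimer exposes a good dual corner reachable from $\partial^-\Lambda$ (option (b)). The delicate edge cases---for instance, a fourth neighbour that is an enclosed unit hole with the surrounding cluster too small to admit any good dual corner---require first removing $p_\ta$ alone (at subcritical cost $U-\Da$) to open an empty corridor adjacent to $p_\tb$, followed by a short local rearrangement that either exposes a good dual corner or brings a $\ta$-particle directly to the fourth neighbour site; careful bookkeeping, in the style of the earlier propositions of this section, ensures that no intermediate configuration exceeds the $3U+\Db$ barrier. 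Once the single-dimer reduction is established in all sub-cases, iteration produces the desired $\eta\prm$.
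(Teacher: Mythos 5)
Your easy case ($k\le 2$) and your overall iteration scheme are fine, but the $k=3$ case --- which is the whole content of the proposition --- has a genuine gap. When the particle of type $\tb$ has three active bonds, its fourth neighbor is either empty or occupied by a particle of type $\tb$ (a type-$\ta$ occupant would give a fourth bond). Your option (b) hinges on the claim that, after removing the dimer, ``the ambient configuration is rich enough'' to expose a good dual corner reachable from $\partial^-\Lambda$, where a fresh particle of type $\tb$ can regain $3U$. This is not justified and is false in general: the site vacated by the dimer's type-$\tb$ particle has only two type-$\ta$ neighbors (the third neighbor is the now-empty dimer site, the fourth is the type-$\tb$ particle), so it is not a good dual corner, and small remaining clusters (e.g.\ a single nearly-isolated $\btile$-like piece) need not contain any concave corner at all. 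Without such a corner your endpoint is $H(\eta)+(3-k')U-\Da>H(\eta)$ for an attachment with $k'\le 2$ bonds, so the energy does not decrease. The correct move in this sub-case, which you miss, is to place a particle of type $\ta$ (not $\tb$) at the vacated $\tb$-site: it bonds to the type-$\tb$ fourth neighbor, giving net change $2U-\Db<0$ by the constraint $\Db-\Da>2U$, within barrier $3U-\Da+\Da=3U$ --- this is exactly how the paper closes case (ii).

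Your option (a) also has an unresolved accessibility problem: you require the empty fourth neighbor to be lattice-connecting, and your fallback of ``removing $p_\ta$ alone to open a corridor'' does not work, because the corridor then ends at $x(p_\ta)$, which is separated from the fourth neighbor by the still-occupied site of $p_\tb$ (nearest-neighbor hops cannot cross it, and a diagonal step is not an allowed move). The paper's resolution is to remove the \emph{entire} dimer temporarily (barrier $3U-\Da$), which opens an empty corridor from $\partial^-\Lambda$ through both dimer sites to the fourth neighbor, then bring in the type-$\ta$ particle to saturate that site, and finally rebuild the dimer; the whole detour stays within $H(\eta)+3U+\Db$ and ends strictly below $H(\eta)$ since $\Da<U$. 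Your ``short local rearrangement with careful bookkeeping'' is precisely the missing argument, so as written the proof does not establish the proposition in the hard case.
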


\begin{proof}
If the particle of type $\tb$ has at most two active bonds, then simply remove the pending dimer.
This reduces the energy, since two bonds are deactivated and a particle of each type is removed 
from $\Lambda$ ($\D H \le 2U - \Da - \Db < 0$), and can be achieved within an energy barrier 
$2U - \Da$ along the following path: first detach ($\D H = U$) and remove ($\D H = - \Da$)
the particle of type $\ta$, then detach ($\D H \le U$)  and remove ($\D H = - \Db$) the particle 
of type $\tb$.

If the particle of type $\tb$ has three active bonds we have two cases:
\begin{itemize}
\item[(i)]
The fourth neighbor of the particle of type $\tb$ of the pending dimer is empty. In this case 
$\eta\prm$ is obtained by filling this empty site with a particle of type $\ta$ in order to 
obtain a $\btile$, which lowers the energy since $\Da < U$. To do this, temporarily remove the 
pending dimer as described above. This leads to a configuration $\tilde{\eta}$ with energy 
$H(\tilde{\eta})=H(\eta) + 3U - \Da - \Db$ reached within energy barrier $3U - \Da$. Then 
bring a particle of type $\ta$ to the designated site ($\D H \le \Da$) and finally put back 
the dimer. The whole path is realized within energy barrier $3U + \Db$.
\item[(ii)]
The fourth neighbor of the particle of type $\tb$ is occupied by a particle of type $\tb$.
In this case $\eta\prm$ is the configuration such that the dimer is removed and the site 
originally occupied by the particle of type $\tb$ of the dimer is occupied by a particle of 
type $\ta$. To obtain $\eta\prm$ from $\eta$, remove the pending dimer (again, as above, 
within energy barrier $3U - \Da$), to reach a configuration $\tilde{\eta}$ with energy 
$H(\tilde{\eta}=H(\eta) + 3U - \Da - \Db$, and bring a particle of type $\ta$ within energy 
barrier $\Da$. To conclude, observe that $H(\eta\prm) = H(\eta) + 2U - \Db < H(\eta)$.   
\end{itemize}
\end{proof}

The cleaning mechanism works as follows: 
\begin{enumerate}
\item 
Remove all the lattice-connecting free particles from the configuration.
\label{alg preparation procedure removing free particles}
\suspend{enumerate}
After that repeat cyclically the following two steps:
\resume{enumerate}
\item 
Iteratively remove/transform all the lattice-connecting pending dimers.
\label{alg preparation step removing pending dimers}
\item 
Bring a particle of type $\ta$ to any of the free sites adjacent to 
the lattice-connecting particles of type $\tb$.
\label{alg preparation step bringing particles of type ta}
\end{enumerate}
Repeat the cleaning mechanism until the configuration is not affected anymore. 
Each of the three steps can be performed within energy barrier $3 U + \Db$. 
Moreover, each step reduces the energy.

\begin{lemma}
\label{lemma preparation procedure}
The outcome of the cleaning mechanism is either a configuration such that 
the first particle encountered while scanning $\Lambda$ in the lexicographic 
order is a particle of type $\ta$ belonging to a horizontal stable (south-)bridge,
or the configuration $\Box$.
\end{lemma}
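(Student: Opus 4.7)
My plan is to proceed by contradiction: suppose the outcome $\eta$ of the cleaning mechanism differs from $\Box$, and let $p$ denote the first particle of $\eta$ encountered in the lexicographic order. By extremality, every site lexicographically preceding $p$ is empty, so $p$ is lattice-connecting, and the two neighbours of $p$ preceding it in lex order (the west and south neighbours under the standard convention) are empty sites.

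First I will rule out that $p$ has type $\tb$. If $p$ did have type $\tb$, then, since $p$ is lattice-connecting, the step of the cleaning mechanism that adds a particle of type $\ta$ to every empty site adjacent to a lattice-connecting type $\tb$ particle would already have filled each empty neighbour of $p$. Since at least one such neighbour precedes $p$ in lex order, this would violate the lex-minimality of $p$. Hence $p$ must be of type $\ta$.

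Next I will analyse the active bonds at $p$. Since the west and south neighbours of $p$ are empty, every active bond of $p$ is with a type $\tb$ particle located at its east or its north neighbour. The removal of lattice-connecting free particles forbids $p$ from having zero active bonds, so at least one such bond exists; let $q$ be the associated type $\tb$ particle. The removal of pending dimers then forces $q$ to be saturated whenever $p$ carries only the single bond to $q$, because $p$ is a lattice-connecting type $\ta$ particle with a single bond and $(p,q)$ would otherwise form a pending dimer. Saturation of $q$ supplies a complete $\btile$ centred at $q$, with $p$ sitting at one of its outer-most basement corners.

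The heart of the proof is then the propagation of this structure. The type $\ta$ particle at the opposite basement corner of the first $\btile$ inherits, in an extremal sense, exactly the same three cleaning invariants that were used at $p$; the previous analysis applied to it produces either another saturated $\btile$ adjacent to the first (extending the chain by one tile horizontally) or else terminates the chain at a second pillar. Iterating until the cluster ends, I obtain a maximal horizontal sequence of saturated $\btiles$ whose top junctions form a beam $\tilde b$ of type $\ta$, whose centres are all of type $\tb$ (so the bridge is stable), and whose two outer-most basement corners are type $\ta$ pillars, one of which is $p$; by definition this is a horizontal stable south-bridge containing $p$. The main obstacle will be this propagation step: one has to combine the extremality of $p$, the lex-order applied at each successive basement corner, and the three invariants left by the cleaning mechanism into a careful case analysis that rules out branching of the chain, extension in the wrong (vertical) direction, or premature termination in a local configuration that is not a pillar.
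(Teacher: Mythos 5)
Your opening moves match the paper's: step 1 of the cleaning mechanism rules out a free first particle, step 3 rules out type $\tb$, and the pending-dimer step forces saturation of an adjacent particle of type $\tb$. But the heart of your argument --- the ``propagation'' that upgrades one saturated $\btile$ to a full \emph{horizontal stable} bridge --- is asserted rather than proved, and the assertion it rests on is not true as stated. The particle at the ``opposite basement corner'' of the first $\btile$ does \emph{not} inherit the invariants you exploited at $p$: it is not lexicographically minimal, the sites preceding it need not be empty, and it need not be lattice-connecting, so neither the free-particle step nor the pending-dimer step applies to it the way it applied to $p$. You yourself flag ruling out branching, vertical extension and premature termination as ``the main obstacle'', but that case analysis is never carried out, and it is precisely where the content of the lemma lies. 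A second, related gap: if $p$ has \emph{two} active bonds (both non-preceding neighbours of type $\tb$, at perpendicular positions), no pending dimer is present, your saturation argument gives nothing, and a priori $p$ could sit at the corner of an L-shaped or vertical structure rather than on a horizontal bridge.

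The paper sidesteps both problems by using lexicographic minimality \emph{globally} rather than tile by tile. In dual coordinates with a top-to-bottom scan, the first particle $q$ has the entire region above its row empty. This yields two things at once: (i) the lower-left neighbour of $q$ is lattice-connecting, hence cannot carry a particle of type $\tb$ (step 3 would otherwise have filled an empty site preceding $q$), so $q$ has exactly one bond, to a particle $p$ of type $\tb$ at its lower-right --- the two-bond case never arises and horizontality is automatic; (ii) every site in the row of $q$ lying above the maximal horizontal run $s(p)$ of type-$\tb$ tiles starting at $p$ is lattice-connecting, hence occupied by a particle of type $\ta$ by step 3 --- this produces the entire beam in one stroke, with no induction along the tiles. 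Only the two extremal particles of type $\tb$ of $s(p)$ then need the pending-dimer argument, and their saturation supplies the two pillars. To salvage your approach you would need to reproduce consequence (ii) in some form; purely local propagation from corner to corner cannot do it.
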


\begin{proof}
Call $q$ the first particle of $\Lambda$ in the lexicographic order. Recall 
that the dual coordinates of $q$ are denoted by $u(q)=(u_{1}(q),u_{2}(q))$. 
Step~\ref{alg preparation step bringing particles of type ta} of the 
cleaning mechanism guarantees that $q$ is a particle of type $\ta$.
The fact that $q$ is the first particle in the lexicographic order implies 
that: (i) all the sites above $u(q)$ are empty; (ii) all the sites with the 
same vertical coordinate as $q$ lying on the left of $q$ are empty as well.
As a consequence of (ii), all the sites on the left of $q$ with vertical 
coordinate $u_{2}(q) - \frac{1}{2}$ are lattice-connecting and therefore 
cannot be occupied by a particle of type $\tb$. Since $q$ cannot be a free 
particle, the site with coordinates $(u_{1}(q)+\frac{1}{2},u_{2}(q)-\frac{1}{2})$
must be occupied by a particle $p$ of type $\tb$. Let $s(p)$ be the 
longest sequence of tiles adjacent to $t(p)$ such that the central site 
is occupied by a particle of type $\tb$. Obviously, $p$ is the left-most 
particle of type $\tb$ in $s(p)$. Call $\tilde{p}$ the last particle of 
type $\tb$ in $s(p)$ and $\tilde{q}$ the particle of type $\ta$ with 
coordinates $(u_{1}(p)+\frac{1}{2},u_{2}(p)+\frac{1}{2})$. (Note that 
$p$ and $\tilde{p}$ may coincide.)  All the sites on the north-side of 
$s(p)$ are lattice-connecting and hence are occupied by a particle 
of type $\ta$. To conclude, observe that both $p$ and $\tilde{p}$ must 
be saturated, otherwise at least one of the pairs $(q,p)$ and $(\tilde{q},
\tilde{p})$ constitutes a pending dimer.	
\end{proof}


\subsection{Energy reduction of a general configuration: Proof of 
Theorem~\ref{theorem recurrence}}
\label{sec general reduction}

Fix any $\eta\notin\{\Box,\boxplus\}$. In this section we will give a general procedure, 
called \emph{energy reduction algorithm}, that allows us to construct a path $\omega\colon\,
\eta\to\eta_{r}$ with $\eta_{r} \in \{\Box,\boxplus\}$ such that $\max_{\xi\in\omega} H(\xi) 
\le H(\eta) + V\starred$ with $V\starred \le 10U-\Da$ and $H(\eta_{r}) < H(\eta)$. Note that 
if $\eta_{r}=\boxplus$, then $H(\eta_{r})< H(\eta)$ because $\groundset=\boxplus$. The construction 
uses the six energy reduction mechanisms described in 
Sections~\ref{sec motion of particles inside clusters}--\ref{sec preparation procedure} and 
relies on Propositions~\ref{prop1}, \ref{prop2}, \ref{prop3}, \ref{prop4}, \ref{prop5}, \ref{prop6},
which are the key results of these sections. The maximal energy barrier in these propositions is 
$10U - \Da$.
\emph{Note}: The energy reduction mechanisms in Sections~\ref{sec growth of clusters} and 
\ref{sec main procedures} concern single droplets far away from $\boundary^{-}\Lambda$
and have an energy barrier not exceeding $4U + \Da < \Gamma\starred$ (see below \eqref{epsdef}). 
For such configurations, the energy can be essentially reduced by saturating particles of type 
$\tb$ and by adding and removing $\abbars$. This explains the remark made in 
Section~\ref{sec discussion}, item 4.

In the remainder of this section we call \emph{supercritical} a $\abbar$ of length $\ge\ell\starred$. 
Similarly, we call \emph{supercritical} a dual rectangle with both side lengths $\ge \ell\starred$.

\begin{proof}
As a preliminary step, perform the cleaning mechanism. If the outcome 
is $\Box$, then the claim is proven. Otherwise, let $b_{1}$ be the first 
bridge encountered in the lexicographic order (which exists by 
Lemma~\ref{lemma preparation procedure}). This bridge can be turned 
into an $\abbar$ $\bar{b}_{1}$ (see Section~\ref{sec main procedures}). If the length 
of $b_{1}$ is $<\ell\starred$, then the $\abbar$ $\bar{b}_{1}$ can be removed, 
which lowers the energy (see Section~\ref{sec growth of clusters}). In this 
case, go back to performing the cleaning mechanism. W.l.o.g.\ we may therefore 
assume that the length of ${b_{1}}$ is $>\ell\starred$.

By construction, all sites above $\bar{b}_{1}$ are empty, and therefore it is possible 
first to construct the $\btiled$ rectangle $r_{1} = \nrec{\bar{b}_{1}}$ within energy 
barrier $2\Da + 2\Db - 4U$ (again lowering the energy), and then expand $r_{1}$ to the 
rectangle $R_{1} = \nmexpansion{r_{1}}$ (see Section~\ref{maximal expansion btiled rectangle}). 
If the vertical side length of $R_{1}$ is $<\ell\starred$, then $R_{1}$ can be 
removed (lowering the energy), and it is possible to perform again the cleaning 
mechanism.

Therefore suppose that $R_{1}$ has both its side lengths $\geq\ell\starred$. In 
the remainder of the section we will show how to reach within energy barrier 
$10U - \Da$ a configuration containing a rectangle $R_{NW}$ touching both the 
north-side and the west-side of $\Lambdaminus$ whose support contains the support 
of $R_{1}$. Once this has been achieved, it is possible to argue for $R_{NW}$ in 
the same way as for $R_{1}$ in order to reach a configuration containing a rectangle 
$R_{NWE}$ touching the north-side, the east-side and the west-side of $\Lambdaminus$ 
whose support contains the support of $R_{NW}$. Repeating the same argument for 
$R_{NWE}$, it is possible to reach $\boxplus$.

The construction of $R_{NW}$ is obtained by using an algorithm called \emph{invasion} 
of $R_{1}$, which is constructed with the help of techniques similar to the ones that
were used to build $R_{1}$.

\medskip\noindent
{\bf (A) Invasion of $R_{1}$.} See Fig.~\ref{fig:algreduction}.
Let $(a_{1}, b_{1})$ be, respectively, the horizontal and the vertical coordinate of
the left lower-most particle of $R_{1}$ (which is of type $\ta$). Define $\Lambda(R_{1})
\subset\Lambda$ to be the set consisting of the sites whose vertical coordinate is 
$\geq b_{1}$ and horizontal coordinate is $<a_{1}$. In words, $\Lambda(R_{1})$ 
contains the sites of $\Lambda$ on the left of $R_{1}$. Perform the cleaning mechanism 
(see Section~\ref{sec preparation procedure}) and scan $\Lambda(R_{1})$ in the lexicographic 
order. Three cases are possible.

\begin{enumerate}[1.]

\item	
$\Lambda(R_{1})$ is empty. Add, if possible ($R_{1}$ might already be touching the 
west-boundary of $\Lambdaminus$), $\abbars$ onto the left side of $R_{1}$ until the 
resulting cluster touches the west-boundary of $\Lambdaminus$.
\label{enum Lambda(R1) is empty}

\item 
The first horizontal bridge $b_{2}$ encountered in $\Lambda(R_{1})$ has length 
$<\ell\starred$. Remove the particles of the (south)-support of the bridge, lowering 
the energy of the configuration, and restart the covering of $\Lambda(R_{1})$.
\label{enum Lambda(R1) contains subcritical bridge}

\item 
The first horizontal bridge $b_{2}$ encountered in $\Lambda(R_{1})$ has length
$\geq\ell\starred$. As for $b_{1}$, first turn $b_{2}$ into the $\abbar$ $\bar{b}_{2}$, 
then build the $\btiled$ rectangle $r_{2} = \nrec{\bar{b}_{2}}$, after that expand 
$r_{2}$ to $R_{2} = \nmexpansion{r_{2}}$, and finally perform the cleaning mechanism. Note 
that the support of $R_{2}$ may cover (part or possibly all of) the support of $R_{1}$. 
This means that during the maximal expansion, some of the sites of $\supp(R_{1})$ were 
in the support of the pillared beam that is going to be $\btiled$. Each time this 
happens, $R_{2}$ absorbs an entire vertical supercritical $\abbar$ of $R_{1}$ (see 
Section~\ref{maximal expansion btiled rectangle}). Call $\tilde{R_{1}}$ what is left 
of $R_{1}$ after the maximal expansion of $R_{2}$. The following three cases are possible:
\begin{inparaenum}[(i)]
\item 
$\tilde{R}_{1}$ does not contain any particle ($\tilde{R}_{1} = \emptyset$);
\label{enum R1 covered}
\item 
$\tilde{R}_{1} \subconf R_{1}$ (in the proper sense); 
\label{enum R1 reduced}
\item 
$\tilde{R}_{1} = R_{1}$. 
\label{enum R1 untouched}
\end{inparaenum}
In Case~(ii), 
the rectangles $R_{2}$ and $\tilde{R}_{1}$ are necessarily 
adjacent (more precisely, the right-most $\abbar$ of $R_{2}$ is adjacent to the left-most 
$\abbar$ of $R_{1}$), whereas in 
Case~(iii) 
the two rectangles may or 
may not be adjacent. Note that this implies that if $\tilde{R_{1}} \subconf R_{1}$, then 
$R_{2}$ is necessarily supercritical. Obviously, if $\tilde{R}_{1} \neq \emptyset$, then 
it is again a $\btiled$ rectangle, and there are several possibilities.
\begin{enumerate}[(a)]
\item 
$R_{2}$ is not supercritical. This implies that $\tilde{R_{1}} = R_{1}$. Remove 
$R_{2}$ from $\Lambda$, put $R_{1} = \tilde{R_{1}}$ and restart the invasion of $R_{1}$.
\item 
$R_{2}$ is supercritical and $\tilde{R}_{1} = \emptyset$. Change the name of $R_{2}$ to 
$R_{1}$ and restart the covering of $\Lambda(R_{1})$.
\label{enum R1 disappeared}
\item 
$R_{2}$ is supercritical and is adjacent to $\tilde{R}_{1}$. Note that both rectangles 
touch the north-side of $\Lambdaminus$. Call $R^{\max}$ the rectangle with the largest 
vertical length (in case of a tie, w.l.o.g.\ choose $R_{1}$) and call $R^{\min}$ the 
other rectangle. Slide $R^{\min}$ onto $R^{\max}$. This is possible because the smoothing 
phase of the maximal expansion (see Section~\ref{maximal expansion btiled rectangle}) removes 
all the particles of type $\tb$ that may interfere with the sliding of the $\abbars$. 
Then perform again the maximal expansion of $R^{\max}$, i.e., the rectangle that has 
not been moved during the sliding. These steps bring the configuration to a rectangle 
whose support contains $\supp(R_{2})\cup\supp(R_{1})\cup\Lambda(R_{1})$. Call this 
rectangle $R_{1}$ and restart the invasion of $R_{1}$. 
\label{enum R1 and R2 adjacent}
\item 
$R_{2}$ is supercritical and is not adjacent to $\tilde{R}_{1}$. This implies $\tilde{R}_{1}
= R_{1}$. Start the invasion of $R_{2}$ (see below).
\end{enumerate}
\end{enumerate}

In order to complete the proof, it remains to show how the invasion of $R_{2}$ carries over.
To that end, we introduce the following \emph{recursive algorithm} realizing the 
\emph{invasion of $R_{i}$} for $i=2,3,\ldots$, etc.

\medskip\noindent
{\bf (B) Invasion of $R_{i}$.}
Call $\bar{R}_{i-1}$ what is left of $R_{i-1}$ after the invasion of $R_{i+1}$. 
There are three cases:
\begin{enumerate}[I.]
\item 
$\bar{R}_{i-1} = \emptyset$ (i.e., the support of $R_{i-1}$ is completely covered by $R_{i}$). 
Put $R_{i-1} = R_{i}$ and restart the invasion of $R_{i-1}$.
\item 
$\bar{R}_{i-1} \neq \emptyset$ and $R_{i}$ and $\bar{R}_{i-1}$ are adjacent. 
Call $R^{\max}$ the rectangle with the largest vertical side between $R_{i}$ and 
$\bar{R}_{i-1}$ (in case of a tie, w.l.o.g.\ choose $R^{\max} = R_{i}$) and call $R^{\min}$ 
the other rectangle. Slide $R^{\min}$ onto $R^{\max}$ and perform the maximal expansion of 
$R^{\max}$. Call $R_{i-1}$ the outcome of the maximal expansion of $R^{\max}$ and restart the 
invasion of $R_{i-1}$.
\item 
$\bar{R}_{i-1} \neq \emptyset$ and $R_{i}$ and $\bar{R}_{i-1}$ are not adjacent.
If $R_{i}$ is on the left of $R_{i-1}$, then let $(a_{i},b_{i})$ denote, respectively, 
the horizontal and the vertical coordinate of the lower right-most particle (which is 
of type $\ta$) of $R_{i}$, and call $\Lambda(R_{i})$ the subset of $\Lambda(R_{i-1})$
consisting of those sites whose vertical coordinates are  $\ge b_{i}$ and whose horizontal 
coordinates are $> a_{i}$. If $R_{i}$ is on the right of $R_{i-1}$, then let $(a_{i},b_{i})$ 
denote, respectively, the horizontal and the vertical coordinate of the lower left-most 
particle (which is of type $\ta$) of $R_{i}$, and call $\Lambda(R_{i})$ the subset of 
$\Lambda(R_{i-1})$ consisting of those sites whose vertical coordinates are $\ge b_{i}$ 
and whose horizontal coordinates are $< a_{i}$. In words, $\Lambda(R_{i})$ consists of 
those sites of $\Lambda(R_{i-1})$ between $R_{i-1}$ and $R_{i}$. Perform the cleaning
mechanism and scan $\Lambda(R_{i})$ in the lexicographic order. There are again several cases.
	
\begin{enumerate}[1.]
\item 
$\Lambda(R_{i})$ is empty. Call $R^{\max}$ the rectangle with the largest vertical side 
between $R_{i}$ and $\bar{R}_{i-1}$ (in case of tie, w.l.o.g.\ choose $R^{\max} = R_{i}$)
and call $R^{\min}$ the other rectangle. Add vertical $\abbars$ on the side of $R^{\min}$ 
facing $R^{\max}$ until (depending on the parity of the rectangles) it becomes adjacent 
(different parity) to $R^{\max}$ or it is at distance $1$ (same parity) from $R^{\max}$. 
In the first case, slide the extended $R^{\min}$ onto $R^{\max}$. Perform the maximal 
expansion of $R^{\max}$, and call $R_{i -1}$ the rectangle obtained in this way, whose 
support contains $\supp(R_{i})\cup R_{i-1}\cup\Lambda(R_{i-1})$. Restart the invasion 
of $R_{i-1}$.
\label{enum Lambda(Ri) is empty}

\item 
The first horizontal bridge $b_{i+1}$ encountered in $\Lambda(R_{i})$ has length
$<\ell\starred$. Remove the particles of the (south)-support of the bridge, lowering 
the energy of the configuration, and restart the invasion of $R_{i}$.
\label{enum Lambda(Ri) subcritical bridge}
		
\item 
The first horizontal bridge $b_{i+1}$ encountered in $\Lambda(R_{i})$ has length 
$\geq\ell\starred$. First turn $b_{i+1}$ into the $\abbar$ $\bar{b}_{i+1}$, then build 
the $\btiled$ rectangle $r_{i+1} = \nrec{\bar{b}_{i+1}}$, after that expand $r_{i}$ to $R_{i+1} 
= \nmexpansion{r_{i+1}}$, and finally perform the cleaning mechanism. Call $\tilde{R_{i}}$ 
what is left of $R_{i}$ after the maximal expansion of $R_{i+1}$. The following cases 
are possible.
\begin{enumerate}[(a)]
\item 
$R_{i+1}$ is not supercritical. This implies $\tilde{R}_{i} = R_{i}$. Remove $R_{i+1}$
from $\Lambda$, put $R_{i} = \tilde{R_{i}}$, and restart the invasion of $R_{i}$.
\item 
$R_{i+1}$ is supercritical and $\tilde{R}_{i} = \emptyset$. Change the name of 
$R_{i+1}$ to $R_{i}$, and restart the invasion of $R_{i}$.
\label{enum Ri disappeared}
\item 
$R_{i+1}$ is supercritical and is adjacent to $\tilde{R}_{i}$. Note that both rectangles 
touch the north-side of $\Lambdaminus$. Slide the rectangle with the shorter vertical 
length onto the other rectangle and perform again the maximal expansion of the rectangle 
that has not been moved during the sliding. These steps bring the configuration to a rectangle 
whose support contains $\supp(R_{i+1}) \cup \supp(R_{i}) \cup \Lambda(R_{i})$. Call 
this rectangle $R_{i}$ and restart the invasion of $R_{i}$.
\label{enum Ri and Ri+1 adjacent}
\item 
$R_{i+1}$ is supercritical and is not adjacent to $\tilde{R}_{i}$. This implies 
$\tilde{R}_{i} = R_{i}$. Start the invasion of $R_{i+1}$.
\label{enum Lambda(Ri) contains supercritical bridge}
\end{enumerate}		
\end{enumerate}
\end{enumerate}

The finiteness of $\Lambda$ ensures that the algorithm eventually terminates.
\end{proof}

\begin{figure}[htbp]
\centering
\subfigure[\label{algred0}]
{\includegraphics[width=0.25\textwidth]{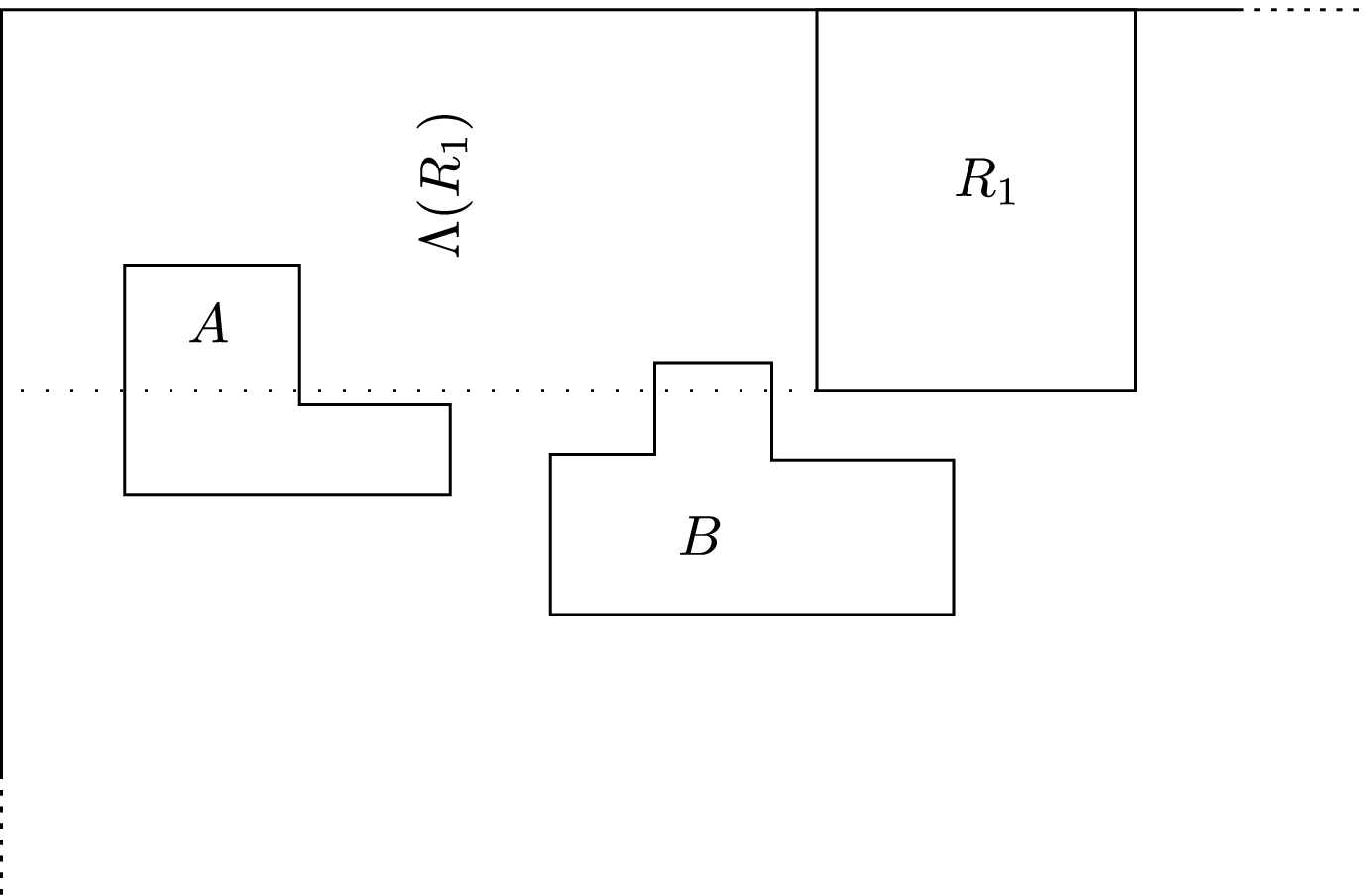}}
\qquad
\subfigure[\label{algred1}]
{\includegraphics[width=0.25\textwidth]{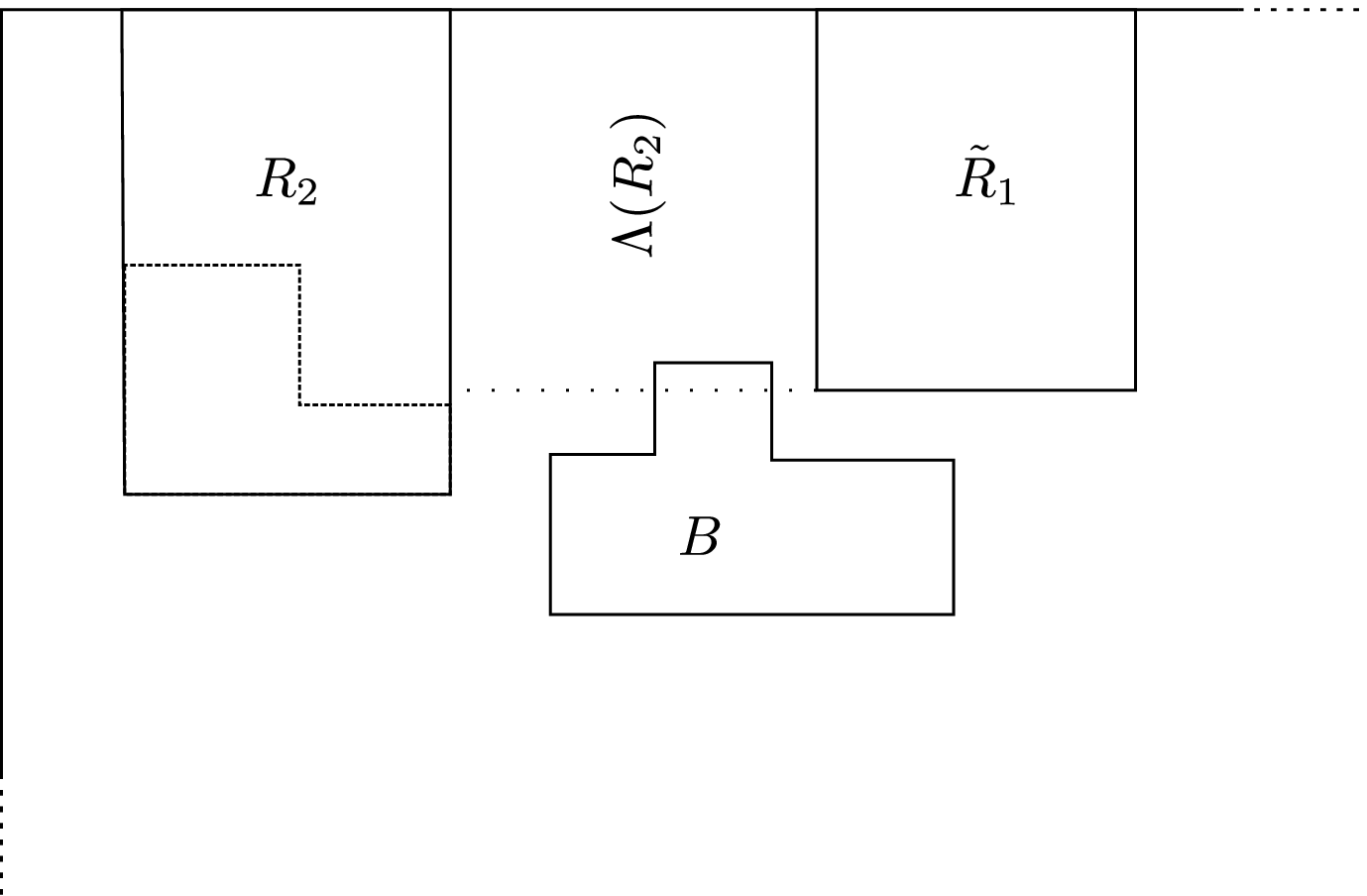}}
\qquad
\subfigure[\label{algred2}]
{\includegraphics[width=0.25\textwidth]{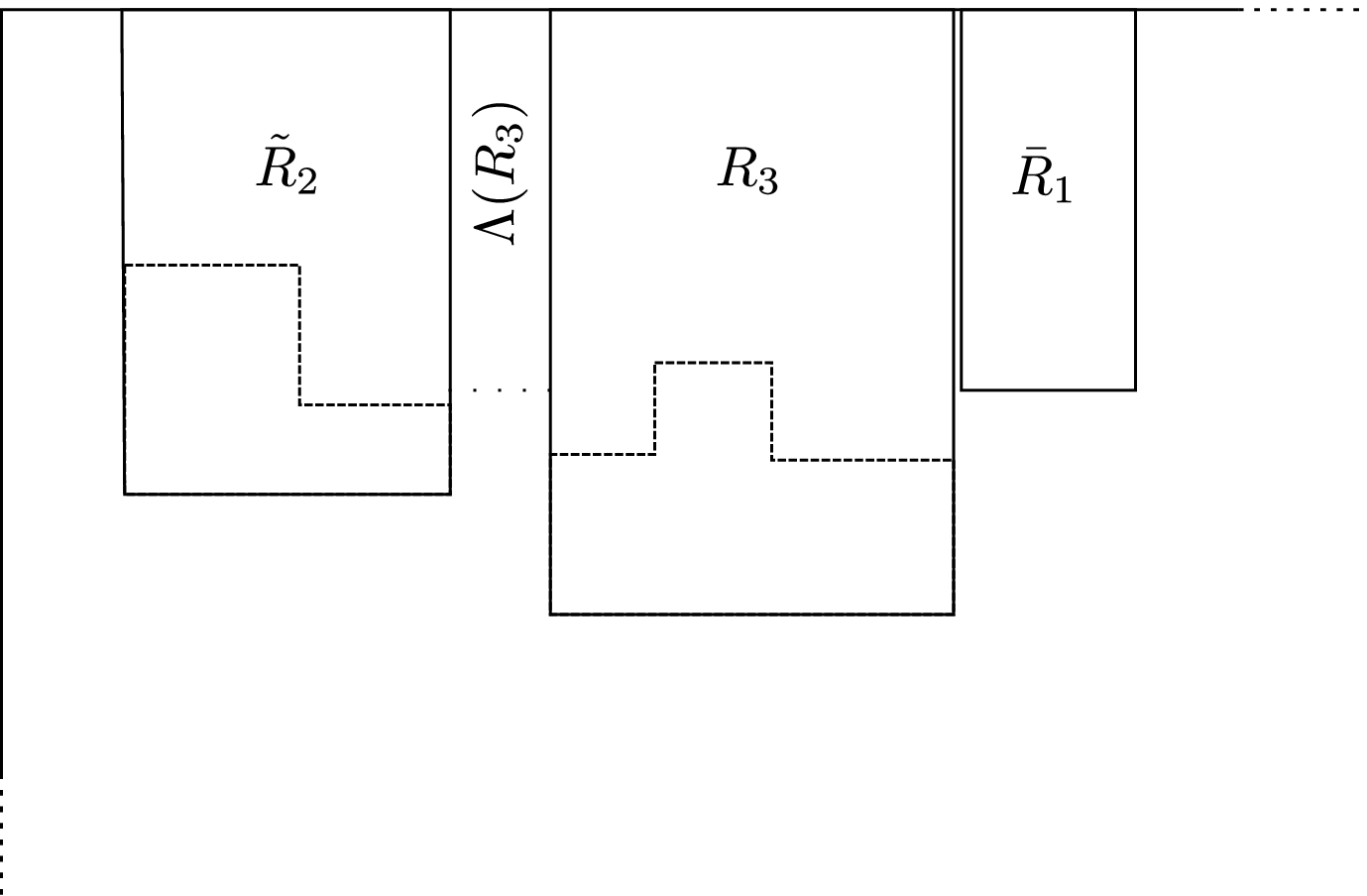}}
\\
\subfigure[\label{algred3}]
{\includegraphics[width=0.25\textwidth]{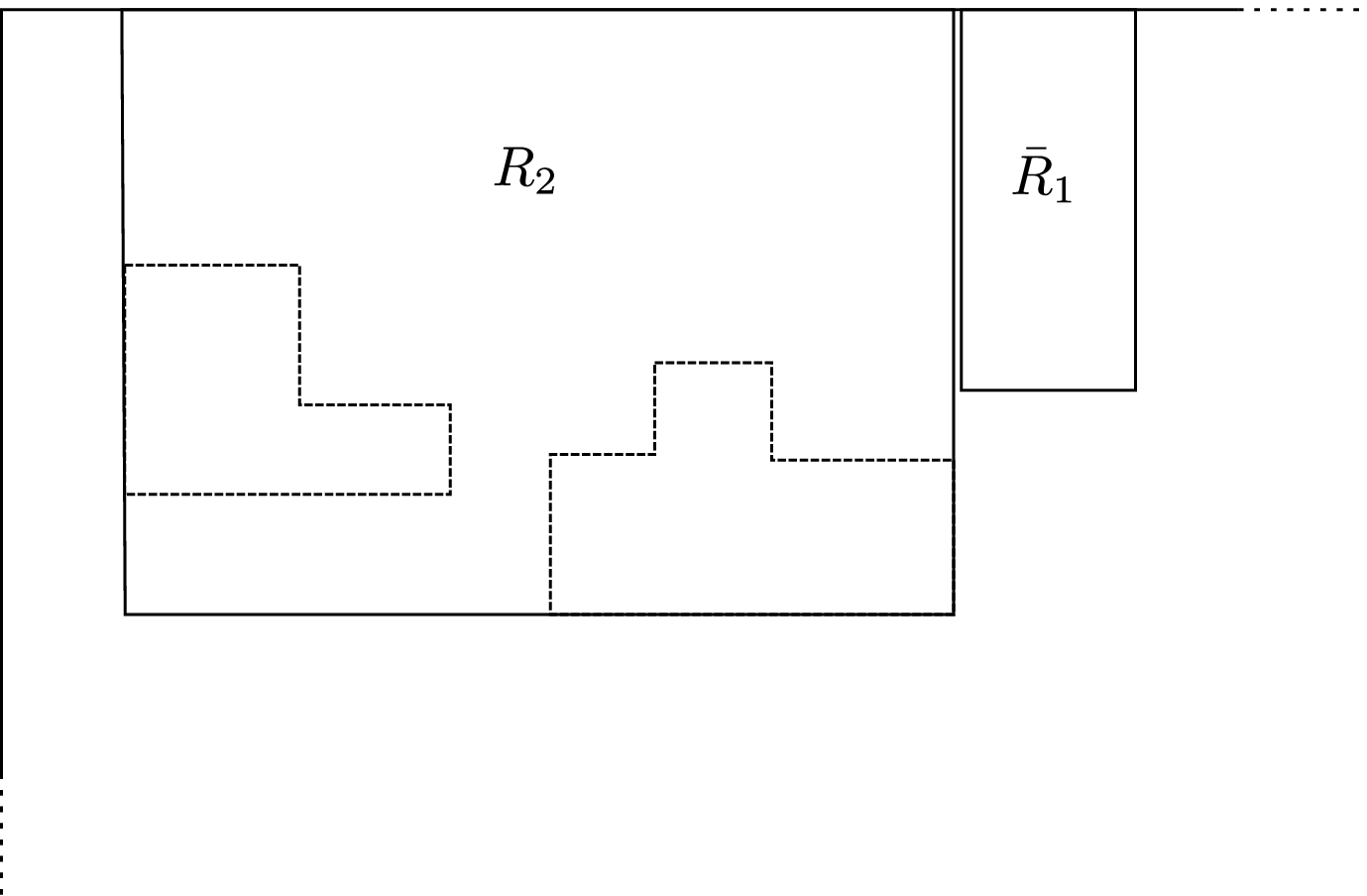}}
\qquad
\subfigure[\label{algred4}]
{\includegraphics[width=0.25\textwidth]{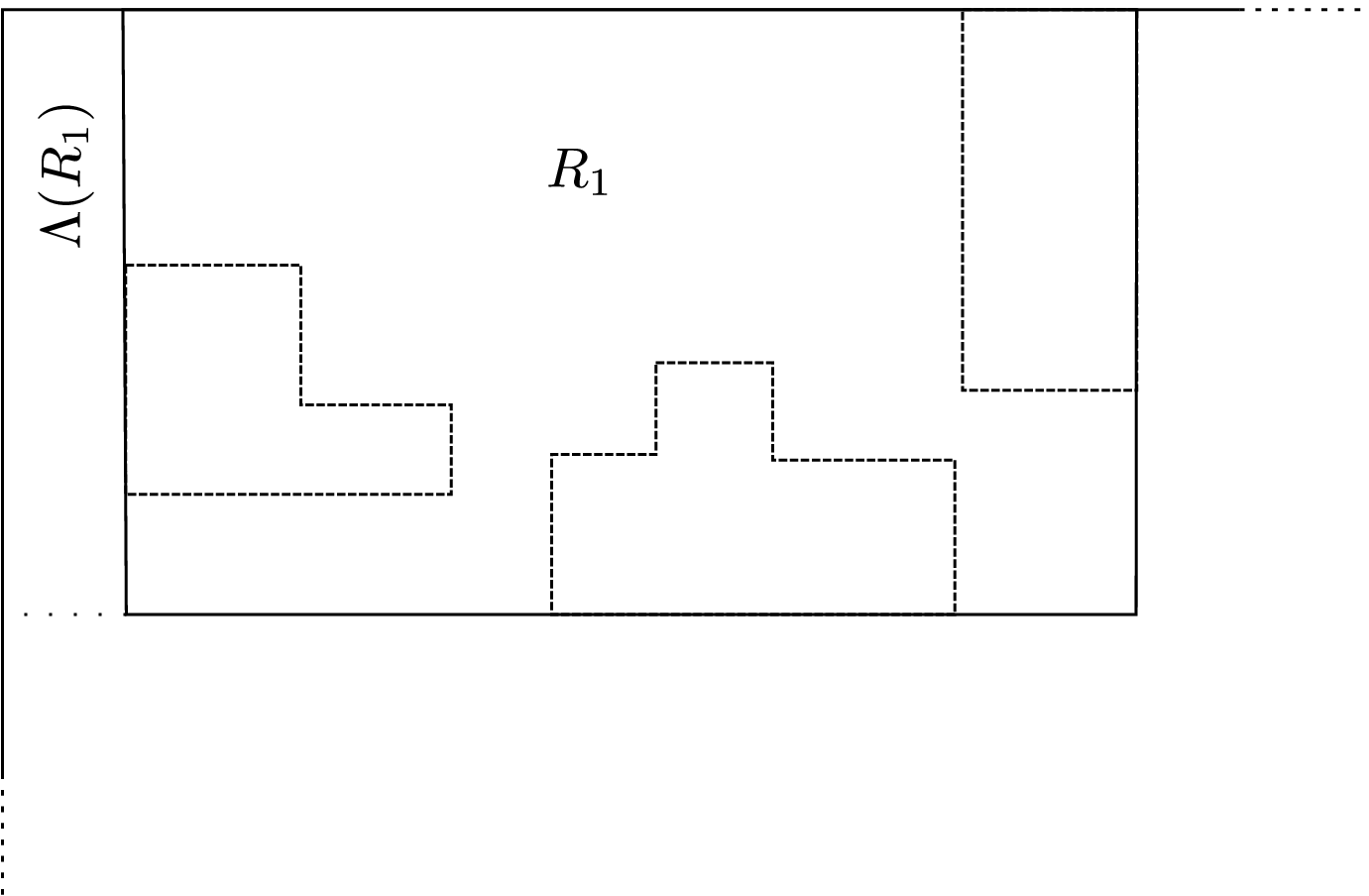}}
\qquad
\subfigure[\label{algred5}]
{\includegraphics[width=0.25\textwidth]{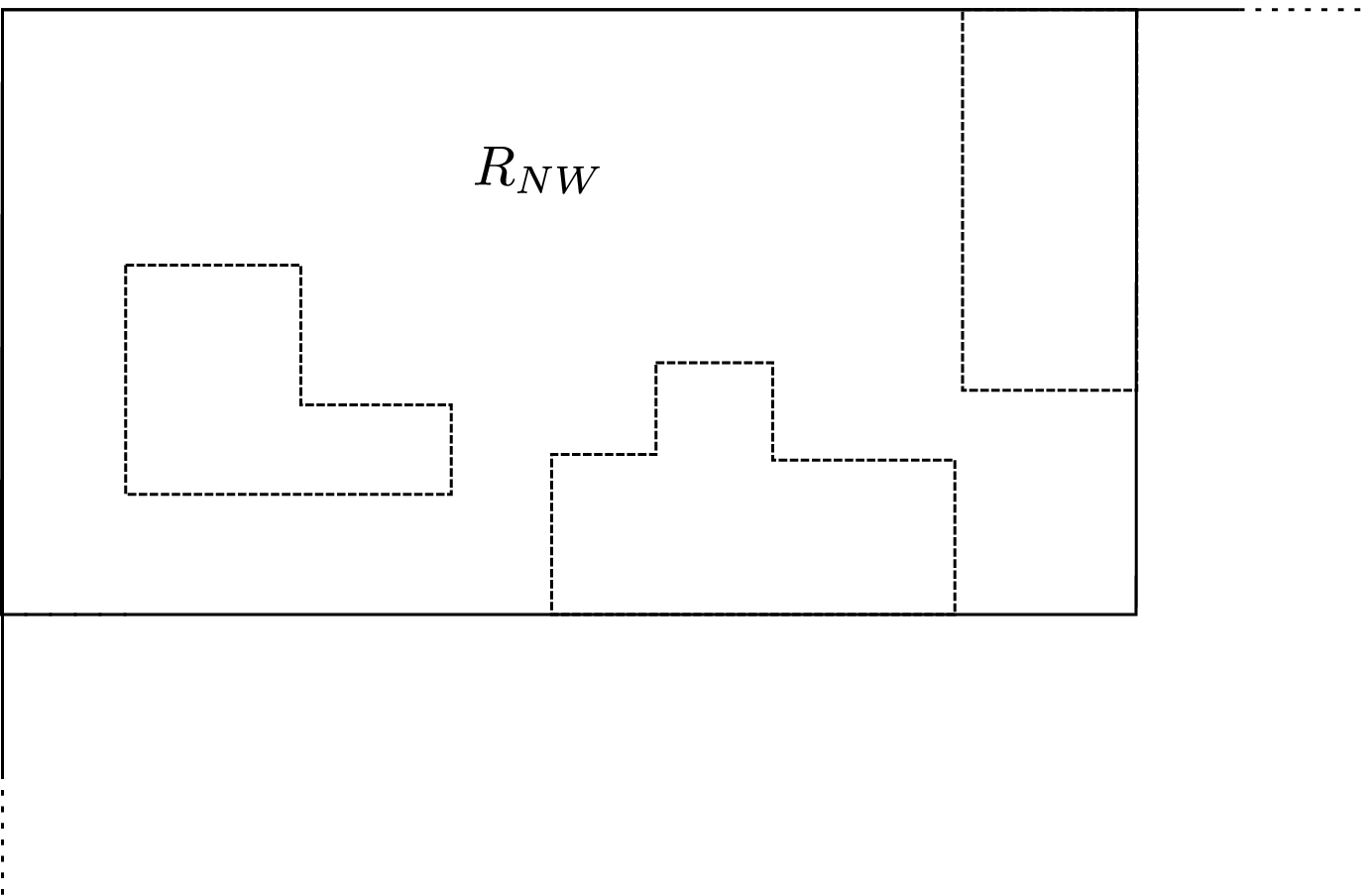}}
\caption{Example of invasion of the dual rectangle $R_{1}$.
Only the support of the relevant clusters are drawn and the parity
of different clusters is not indicated.
The set
$\Lambda(R_{1})$ contains a supercritical bridge belonging to cluster $A$ (Fig.~\ref{algred0}). 
Growing this bridge via the construction of its northern rectangle and its subsequent maximal expansion
leads to the supercritical rectangle $R_{2}$ (Fig.~\ref{algred1}). 
Next, the invasion of $\Lambda(R_{2})$ has  to be performed 
in order to complete the invasion of $R_{1}$. 
The set $\Lambda(R_{2})$ contains a supercritical bridge belonging
to cluster $B$, which is grown into the supercritical rectangle $R_{3}$ (Fig.~\ref{algred2}).
Note that $R_{3}$ partly covers the support of $\tilde{R}_{1}$ and that 
$R_{3}$ and $\bar{R}_{1}$ are adjacent.
The invasion of $R_{2}$ proceeds via the invasion of $R_{3}$. 
Since $\Lambda(R_{3})$ is empty, the invasion of $R_{3}$ is carried out 
by adding $\abbars$ to the left-side
of $R_{3}$ until $\tilde{R}_{2}$ is at dual distance $1$. 
After that a maximal expansion produces a dual
rectangle that covers the support of $\tilde{R}_{2}$ (Fig.~\ref{algred3}). 
The new dual rectangle $R_{2}$ is adjacent to
$\bar{R}_{1}$. The two rectangles are merged and a maximal expansion gives
a new rectangle $R_{1}$ (Fig.\ref{algred4}). 
Now $\Lambda(R_{1})$ is empty and can be filled by
adding $\abbars$ to the left-side of $R_{1}$ until the rectangle $R_{NW}$ is obtained
(Fig.~\ref{algred5}).
}
\label{fig:algreduction}
\end{figure}

\pagebreak

\end{document}